\documentclass[10pt]{article}
\usepackage{amsmath,amssymb,amsfonts,amsthm}
\usepackage{mathrsfs}
\usepackage{indentfirst}
\usepackage[pdftex]{color,graphicx}
\usepackage[pdftex,bookmarks,unicode,colorlinks]{hyperref}
\usepackage{enumerate} 
\usepackage[numbers]{natbib} 
\usepackage{yfonts} 
\usepackage[title]{appendix} 

\makeatletter
\let\@fnsymbol\@arabic 
\makeatother

\oddsidemargin 0in
\evensidemargin 0in
\topmargin -0.6in
\textwidth 6.5in
\textheight 9in

\theoremstyle{plain}
\newtheorem{theorem}{Theorem}[section]
\newtheorem{proposition}[theorem]{Proposition}
\newtheorem{corollary}[theorem]{Corollary}
\newtheorem{lemma}[theorem]{Lemma}

\theoremstyle{remark}
\newtheorem{remark}[theorem]{\underline{Remark}}
\newtheorem{example}[theorem]{\underline{Example}}
\newtheorem{condition}{Cond.}

\theoremstyle{definition}
\newtheorem{definition}[theorem]{Definition}


\newcommand{\E}{\mathbf E}
\renewcommand{\P}{\mathbf P}

\newcommand{\R}{\mathbb R}
\newcommand{\N}{\mathbb N}

\newcommand{\Ro}[1]{\mathbb R^{#1}\setminus \{0\}}
\newcommand{\C}{\mathcal C}
\newcommand{\D}{\mathcal D}

\newcommand{\e}{\epsilon}
\newcommand{\F}{\mathcal F}
\newcommand{\B}{\mathcal B}
\newcommand{\X}{\mathcal X}
\newcommand{\Pred}{\mathcal P}
\newcommand{\ind}{\mathbf 1}

\numberwithin{equation}{section} 

\begin{document}

\title{\bf\Large Large Deviations for Stochastic Differential Equations \\ Driven by Semimartingales}
\author{\bf\normalsize{
Qiao Huang\footnote{Group of Mathematical Physics (GFMUL), Department of Mathematics, Faculty of Sciences, University of Lisbon, Campo Grande, Edif\'{\i}cio C6, PT-1749-016 Lisboa, Portugal. Email: \texttt{qhuang@fc.ul.pt}} $^,$\footnote{Present Address: Division of Mathematical Sciences, School of Physical and Mathematical Sciences, Nanyang Technological University, 21 Nanyang Link, Singapore 637371. Email: \texttt{qiao.huang@ntu.edu.sg}},
Wei Wei\footnote{Institute of Natural Sciences, Shanghai Jiao Tong University, 800 Dongchuan RD. Minhang District, Shanghai 200240, P.R.China Email: \texttt{weiw\_sjtu@sjtu.edu.cn}},
Jinqiao Duan\footnote{Department of Mathematics and Department of Physics, Great Bay University, Dongguan, Guangdong 523000 P.R.China; Email: \texttt{duan@gbu.edu.cn}}
}}

\date{}
\maketitle
\vspace{-0.3in}

\begin{abstract}
  We prove a large deviation principle for stochastic differential equations driven by semimartingales, with additive controls. Conditions are given in terms of characteristics of driven semimartingales, so that if the noise-control pairs satisfy a large deviation principle with some good rate function, so do the solution processes. There is no joint exponential tightness assumption for noise-control-solution triplets and no uniform exponential tightness assumption for noise.
  \bigskip\\
  \textbf{AMS 2010 Mathematics Subject Classification:} 60F10, 60H10, 60G44, 60G51, 60G57. \\
  \textbf{Keywords and Phrases:} Large deviations, stochastic differential equations, semimartingales, uniform exponential tightness, exponential tightness.
\end{abstract}

\section{Introduction}

The theory of large deviations is concerned with asymptotic computation of probabilities of rare events on an exponential scale. In its basic form, the theory considers the limit of normalizations of logarithmic probability for a sequence of events with asymptotically vanishing probability. It has proved to be an crucial tool to tackle many problems in statistics, engineering, statistical mechanics, and applied probability. The sample path large deviations focus mainly on the Markovian solutions of small noise stochastic differential equations (SDEs) driven by Brownian motions \cite{BDM08, DZ98, FK06} and Poisson jump measures \cite{BDM11, BCD13}.

Semimartingales may fail to have the Markov property. In general, the large deviation principles for semimartingales are formulated on the \emph{Skorokhod space} $\D_d:=\D(\R_+;\R^d)$, which is the space of all $\R^d$-valued c\`adl\`ag functions on $\R_+$, equipped with the Skorokhod topology. In \cite{Puh94}, A.~Puhalskii established conditions for the large deviation principle in the Skorohod topology to hold for a sequence of semimartingales in terms of the convergence of their predictable characteristics. In that paper and in his follow-up work \cite{Puh94b}, Puhalskii derived many parallels between exponential convergence in the form of large deviations and weak convergence, which are formulated more systematically in his monograph \cite{Puh01} via the theory of idempotent measures. We will draw on this idea in this current paper.

In this paper, we are concerned with the large deviation principle (LDP) for the family of solution processes of semimartingale-driven stochastic differential equations. For each $\e>0$, suppose we have a filtered probability space $(\Omega^\e,\F^\e,\{\F^\e_t\}_{t\ge0},\P^\e)$ with right-continuous filtration, endowed with a $d$-dimensional c\`adl\`ag semimartingale $X^\e$ and an $n$-dimensional c\`adl\`ag adapted process $U^\e$. We also have a global Lipschitz function $F: \R^n \to \R^{n\times d}$, so that each stochastic differential equation
\begin{equation}\label{sde}
  Y^\e_t = U^\e_t + \int_0^t F(Y^\e_{s-}) d X^\e_s,
\end{equation}
has a unique global solution $Y^\e$ which is an $n$-dimensional $\{\F^\e_t\}$-adapted process with ($\P^\e$-a.s.) c\`adl\`ag paths (see, e.g., \cite{Pro05}). 
The symbol $F_-$ is used to denote the process $F_{t-}:= \lim_{s\uparrow t} F_s$.

Logically, system \eqref{sde} has ``inputs'' the \emph{control} $U^\e$ and the \emph{noise} $X^\e$, and ``output'' the \emph{solution} $Y^\e$. Follow the principle of causality for stochastic dynamical systems (cf. \cite[Section 5.2]{KS91}), a natural \emph{question} is: if the family of ``inputs'' $\{(X^\e,U^\e)\}_{\e>0}$ satisfies a large deviation principle, is it true that the family of ``outputs'' $\{Y^\e\}_{\e>0}$ also satisfies a large deviation principle?

A similar question has been investigated in \cite{Gan18,Gar08}. The former considered the case that $U^\e \equiv 0$ (which is not essential), and the latter concerned itself with the infinite-dimensional case. In both papers, a \emph{uniform exponential tightness} (UET) condition on the family of noise $\{X^\e\}_{\e>0}$ was proposed to prove the LDP for the solution family $\{Y^\e\}_{\e>0}$, provided that an LDP holds for $\{X^\e\}_{\e>0}$ and that $\{(X^\e,U^\e,Y^\e)\}_{\e>0}$ is \emph{exponentially tight}.
In the presence of exponential tightness of the family $\{(X^\e,U^\e,Y^\e)\}_{\e>0}$, the classical theory of large deviations tells that this family has a subsequence that satisfies an LDP with some good rate function (see, for instance, \cite[Lemma 4.1.23]{DZ98}). Therefore, to prove the LDP for this family, one only needs to assume it holds and then to identify the rate function, and finally to show that this rate function does not depend on the choice of subsequences.

The results in those two papers partially answered preceding question, as the assumption of joint exponential tightness of $\{(X^\e,U^\e,Y^\e)\}_{\e>0}$ involves additional condition on the ``outputs'' $Y^\e$'s. Although this joint exponential tightness may not be hard to verify from the exponential tightness of $\{(X^\e,U^\e)\}_{\e>0}$ and $\{Y^\e\}_{\e>0}$ in the case that characteristics of $X^\e$ are deterministic (cf. \cite[Theorem 4.1]{FK06}), we still think that it has limitations and does not quite fit the principle of causality. For these reasons, we will discard the exponential tightness assumption for $\{(X^\e,U^\e,Y^\e)\}_{\e>0}$ in this paper. The trick is to separate the joint exponential tightness of ``inputs'' $\{(X^\e,U^\e)\}_{\e>0}$ and ``outputs'' $\{Y^\e\}_{\e>0}$ via a S{\l}omi\'nski-type criterion (Proposition \ref{Slominski}) which has its own practicability. See also Step 3 of the proof of Theorem \ref{LDP-special} below. The S{\l}omi\'nski criterion was firstly proposed in \cite[Proposition 2]{Slo89} to separate the joint tightness as an intermediate step to prove the stability of weak convergence of SDEs. What we do in Proposition \ref{Slominski} is to generalize this kind of criterion to the context of exponential tightness and large deviation principles. Meanwhile, the UET condition proposed in \cite{Gan18,Gar08} is aiming to extend weak convergence results for stochastic integrals to the large deviation setting.
It is an exponential version of uniform tightness (UT) condition in the study of weak convergence (cf. \cite[Section VI.6]{JS13}). We will also get rid of the UET condition, and give various sufficient conditions for the UET, in terms of characteristics of semimartingales.

The main result of this paper is to answer the preceding question in a complete fashion, that is, we are only 
provided the exponential tightness of noise-control pairs $(X^\e,U^\e)$'s. All assumptions are made only in terms of the characteristics of driving noise $X^\e$, which is easy to check in practice. For each $f\in\D_{n\times d}$ and $x\in \D_d$ with $x$ of finite variation locally, we denote $(f\cdot x)(t):=\lim_{\|\Delta\|\to 0} \sum_{i} f(t_i) (x(t_{i+1}) - x(t_i))$, where $\Delta = \{t_i\}_{i=0}^k$ is a partition of the interval $[0,t]$ with $0=t_0\le t_1\le \cdots \le t_k=t$, and $\|\Delta\| := \max_{1\le i\le k}|t_{i+1}-t_i|$ denotes the mesh of $\Delta$. For an adapted process $B$ with locally finite variation, we denote by $V(B)$ its \emph{variation process}, namely, $V(B)_t(\omega)$ is the total variation of the function $s\mapsto B_s(\omega)$ on the interval $[0,t]$. We call $h:\R^d \to \R^d$ a \emph{truncation function} if it is bounded and satisfies $h(x) = x$ in a neighborhood of 0. For $b>0$, we define a truncation function $h_b(x):= x\ind_{\{|x|\le b\}}$, which is commonly used. With respect to a given truncation function $h$, we can associate each $X^\e$ a triplet $(B^\e(h),C^\e,\nu^\e)$ which is called \emph{characteristics}. That is, $B^\e(h)$ is the process with locally finite variation in the special semimartingale part of $X^\e$ that has jumps dominated by $h$, $C^{\e} $ is the quadratic variation of the continuous martingale part and $\nu^\e$ the L\'evy system of $X^{\e}$. See Section \ref{sec-3} for more details on these notions and notations. Then we will show that

\begin{theorem}\label{LDP-special}
  Let $F$ be a bounded Lipschitz function with both Lipschitz constant and itself bounded by constant $c>0$.
  For each $\e>0$, let $X^\e$ be a quasi-left-continuous semimartingales with characteristics $(B^\e(h),C^\e,\nu^\e)$ associated to the truncation function $h$, and let $Y^\e$ be the solution of \eqref{sde}. 
  Assume there exist $b>0$ and $0<r<\frac{1}{4c}$ such that for each $t>0$, the following three real-valued families are all exponentially tight,
  \begin{equation}\label{three-families}
    \{V(B^\e(h_b))_t\}_{\e>0}, \quad \left\{\frac{1}{\e} C^\e_t\right\}_{\e>0}, \quad \left\{ \e \int_{\Ro d} \exp\left(\frac{|x|}{\e r}\vee 1 \right) \nu^\e(dx,[0,t]) \right\}_{\e>0}, 
  \end{equation}
  If the family $\{(X^\e,U^\e)\}_{\e>0}$ satisfies the LDP with good rate function $I'$ on $\D_{d+n}$, then the family $\{(X^\e,U^\e,Y^\e)\}_{\e>0}$ also satisfies the LDP with the following good rate function on $\D_{d+2n}$,
  \begin{equation}\label{rate-func}
    I(x,u,y) =
    \begin{cases}
      I'(x,u), & y = u+F(y)\cdot x, \ x \text{ is locally of finite variation}, \\
      \infty, & \text{otherwise}.
    \end{cases}
  \end{equation}
  In particular, the family $\{Y^\e\}_{\e>0}$ satisfies the LDP with the following good rate function
  \begin{equation}\label{rate-func-2}
    I^\flat(y) =\inf\{ I'(x,u): y = u+F(y)\cdot x, \ x \text{ is locally of finite variation}\}.
  \end{equation}
\end{theorem}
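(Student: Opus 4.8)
\noindent\emph{Sketch of the argument.} The plan is to run the classical three-step scheme — exponential tightness, identification of the rate function along subsequences, uniqueness of rate functions — but the whole point is that the exponential tightness of the solution family must be \emph{produced} from \eqref{three-families} rather than assumed. Concretely, I would (1) show that \eqref{three-families} implies the uniform exponential tightness of $\{X^\e\}_{\e>0}$ together with the fact that every large-deviation limit of $\{X^\e\}$ is a path that is locally of finite variation; (2) transfer this, via the large-deviation analogue of the continuity of the It\^o map, to exponential tightness of $\{(X^\e,U^\e,Y^\e)\}_{\e>0}$; and (3) identify the limiting rate function along subsequences as $I$ and conclude.

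For step (1), decompose each $X^\e$ according to its characteristics, $X^\e = B^\e(h_b) + X^{\e,c} + \check X^{\e} + \hat X^{\e}$, where $X^{\e,c}$ is the continuous local martingale part with $\langle X^{\e,c}\rangle = C^\e$, $\check X^\e$ is the compensated sum of jumps of size at most $b$, and $\hat X^\e$ collects the locally finitely many jumps of size exceeding $b$ (quasi-left-continuity guarantees the clean, time-continuous form of these pieces). One estimates $X^{\e,c}$ through the exponential supermartingale $\exp(\lambda\cdot X^{\e,c}-\tfrac{\lambda^2}{2}C^\e)$, which bounds $\P(\sup_{s\le t}|X^{\e,c}_s|>\delta)$ in terms of $\delta$ and $\|C^\e_t\|$; together with exponential tightness of $\{C^\e_t/\e\}$ this makes the continuous martingale part exponentially negligible, and in the large-deviation scaling it concentrates on absolutely continuous (hence finite-variation) paths. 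The jump pieces are controlled by the hypothesis on $\e\int_{\Ro{d}}\exp(|x|/(\e r)\vee 1)\,\nu^\e(dx,[0,t])$: applied directly it controls $\hat X^\e$ and shows that in the limit only finitely many jumps of bounded size survive; applied through the elementary bounds $e^u-1-u\le u^2e^{|u|}$ and $|x|\le(\e r)e^{|x|/(\e r)}$, it controls both the predictable quadratic variation and the compensator of the total variation of $\check X^\e$, so that a Lenglart-type exponential inequality makes $\check X^\e$, with exponentially good probability, a path of bounded variation. The drift $B^\e(h_b)$ is handled directly by exponential tightness of $\{V(B^\e(h_b))_t\}$. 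Collecting these gives uniform exponential tightness of $\{X^\e\}$ and the locally-finite-variation property of its large-deviation limits.

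For step (2), since $\{(X^\e,U^\e)\}$ satisfies the LDP with a good rate function it is exponentially tight, and combining this with the uniform exponential tightness of $\{X^\e\}$ one obtains, via a Gronwall estimate carried out at the level of exponential moments, that $\sup_{s\le t}|Y^\e_s|$ and the Skorokhod modulus of $Y^\e$ are controlled with exponentially good probability, so $\{(X^\e,U^\e,Y^\e)\}$ is exponentially tight. This is where the restriction $r<\tfrac{1}{4C}$ is decisive: a jump of $X^\e$ of size $|x|$ produces a jump of $Y^\e$ of size at most $C|x|=(Cr)(|x|/r)$, and the Picard iteration defining the solution of \eqref{sde} costs a further bounded factor, so one needs $4Cr<1$ for the exponential moments of the jumps of $Y^\e$ — inherited from $\nu^\e$ — to remain finite and uniformly controllable.

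For step (3), by \cite[Lemma 4.1.23]{DZ98} every subsequence of $\{(X^\e,U^\e,Y^\e)\}$ has a further subsequence satisfying the LDP with some good rate function $\tilde I$; the LDP for $\{(X^\e,U^\e)\}$ persists along this subsequence, so the contraction principle applied to the projection $(x,u,y)\mapsto(x,u)$ gives $\tilde I(x,u,y)\ge I'(x,u)$ and $\inf_y\tilde I(x,u,y)=I'(x,u)$; in particular, by step (1), $\{(x,u):I'(x,u)<\infty\}$ consists of pairs whose first component is locally of finite variation, for which the limit equation $y=u+F(y)\cdot x$ has a unique solution by Gronwall. The large-deviation continuity of the solution map — the exponential analogue of the stochastic-integral continuity theorem, available precisely because $\{X^\e\}$ is uniformly exponentially tight — then shows on the one hand that $\tilde I(x,u,y)=\infty$ unless $y$ solves the limit equation, and on the other hand that $\tilde I(x,u,y)\le I'(x,u)$ whenever it does; hence $\tilde I=I$ independently of the subsequence, and the whole family $\{(X^\e,U^\e,Y^\e)\}$ satisfies the LDP with rate function $I$. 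The claim for $\{Y^\e\}$ follows by the contraction principle applied to $(x,u,y)\mapsto y$. I expect the main obstacle to be the conjunction of step (1) with the sharp exponential-moment Gronwall estimate of step (2): controlling the jumps of $Y^\e$ through the exponential moment of $\nu^\e$ under the condition $r<\tfrac{1}{4C}$, and establishing that the effective domain of $I'$ — equivalently, the set of large-deviation limits of the noises $X^\e$ — contains only locally-finite-variation paths.
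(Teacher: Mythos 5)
Your overall architecture (uniform exponential tightness of $\{X^\e\}$ from the characteristics, an exponential-moment Gronwall bound for $Y^\e$, identification of the rate function via the contraction principle and the LDP for stochastic integrals under UET) is the same as the paper's, but the sketch has a genuine gap at its central step: the exponential-moment Gronwall estimate cannot be run directly under the stated hypotheses. The key inequality one needs is of the form $\E^\e\bigl[(\rho^2+|Y^\e|^2)^{1/\e}\bigr]\le \phi^\e(0)\,e^{CK/\e}$, and the Gronwall closure requires a \emph{pathwise uniform} bound $K$ on the increasing process $\Xi(X^\e)_t=V(B^\e)_t+\tfrac1\e\langle X^{\e,c}\rangle_t+\tfrac1\e\int e^{2C|x|/\e}|x|^2\,\nu^\e(dx,[0,t])$, as well as uniform bounds on $U^\e$ and on $|\Delta X^\e|$. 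Exponential tightness of the three families in \eqref{three-families} gives these quantities only on events of exponentially high probability, not uniformly in $\omega$. The paper therefore devotes two localization steps to this: first truncating $X^\e,U^\e$ by smooth cutoffs $f_p,g_p$ (and verifying that the truncated semimartingales still satisfy \eqref{three-families}), then stopping at the first time $\Xi(X^\e)$ exceeds $p$ (and using quasi-left-continuity to ensure the stopped $\Xi$ does not overshoot $p$), and finally showing the localization error is exponentially negligible. Your proposal never performs this reduction, and without it the "Gronwall estimate at the level of exponential moments" does not close. Relatedly, your account of the role of $r<\tfrac{1}{4C}$ (jumps of $Y^\e$ through Picard iteration) is not the actual mechanism: the condition is what makes $e^{2C|x|/\e}|x|^2\le \e^2\exp(|x|/(\e r)\vee 1)$ after rescaling, i.e.\ it lets the third family in \eqref{three-families} dominate the jump term of $\Xi(X^\e)$.

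A second, smaller omission: a sup-norm bound on $Y^\e$ gives only half of exponential tightness in $\D$; you assert that "the Skorokhod modulus of $Y^\e$" is controlled but give no mechanism, and it does not follow from the Gronwall estimate. The paper's device is a S{\l}omi\'nski-type criterion: from the LDP of $(X^\e,U^\e)$ one constructs stopping times $T^{\e,p}_i$ at which $(X^\e,U^\e)$ moves by about $1/p$, shows via the contraction principle that consecutive times are separated with exponentially high probability, and then transfers the oscillation bound to $Y^\e$ through the elementary estimate $|Y^\e_t-Y^\e_{T^{\e,p}_i}|\le |U^\e_t-U^\e_{T^{\e,p}_i}|+C|X^\e_t-X^\e_{T^{\e,p}_i}|\le (1+C)/p$, valid because $F$ is bounded by $C$. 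Some such argument is indispensable and should be supplied. Your identification step (3) is essentially correct and matches the paper's Step 4, except that the claim that the effective domain of $I'$ consists only of finite-variation paths is neither needed nor asserted by the theorem; what one actually shows, via the stochastic-integral LDP, is that the rate function of the \emph{triple} is infinite off that set.
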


A direct application of our theorem is to investigate the large deviations of SDEs driven by \emph{exponentially integrable} additive process, as illustrated in Example \ref{examp}. The solutions to SDE driven by L\'evy noise in both finite-dimensional and infinite-dimensional settings, have been showed to obey an LDP in \cite{BDM11, BCD13}, via the weak convergence approach. The exponential tightness condition for the third family in \eqref{three-families} is a generalization of the exponential integrability condition in the context of L\'evy noise. But to the best of our knowledge, there is no literature applying the weak convergence approach to SDEs driven by general semimartingales and seeking conditions for characteristics.

The sequel of this paper is organized as follows. In the next section, we will recall definitions of large deviation principles and exponential tightness. Then we establish a S{\l}omi\'nski-type criterion for the exponential tightness in Skorokhod space. In Section \ref{sec-3}, we will recall some basic notions in the theory of semimartingales and give various characterizations for the UET property of semimartingales. The relations between UET property and exponential tightness will be discussed as well. Section \ref{sec-4} will be devoted to the proofs of our main result Theorem \ref{LDP-special}. Finally, Section \ref{sec-5} is reserved an application to SDE driven by additive noise. More characterizations for the UET property and a few comments on $\C$-exponential tightness are left into Appendix \ref{app} and \ref{app-2}.

\section{Exponential tightness}\label{sec-2}

Let $\X$ be a topological space with countable base, endowed with Borel $\sigma$-algebra $\B(\X)$. A \emph{good rate function} $I$ is a lower semicontinuous mapping $I:\X\to [0,\infty]$ such that for all $\alpha\in [0,\infty)$, the level set $\Phi_I(\alpha):=\{x\in\X: I(x)\le\alpha\}$ is a closed, \emph{compact} subset of $\X$. A family of probability measures $\{\mu_\e\}_{\e>0}$ on $(\X,\B(\X))$ is said to satisfy the \emph{large deviation principle} (LDP) with a good rate function $I$ if, for all $\Gamma\in\B(\X)$, one have
\begin{equation*}
  -\inf_{x\in\Gamma^\circ} I(x) \le \liminf_{\e\to0} \e\log \mu_\e(\Gamma) \le \limsup_{\e\to0} \e\log \mu_\e(\Gamma) \le -\inf_{x\in\overline\Gamma} I(x),
\end{equation*}
where $\Gamma^\circ$ and $\overline\Gamma$ denote the topological interior and closure of $\Gamma$. A family of $\X$-valued random elements $\{X^\e\}_{\e>0}$ is said to satisfies a large deviation principle if the family of probability measures induced by $X^\e$ on $\X$ satisfies a large deviation principle. We refer to \cite{DZ98,DE11} for more details of the large deviation theory.

A family of probability measure $\{\mu_\e\}_{\e>0}$ on $(\X,\B(\X))$ is said to be \emph{exponentially tight} if for every $\alpha<\infty$, there exists a compact set $K \subset \X$ such that
\begin{equation*}
  \limsup_{\e\to0} \e \log \mu_\e(K^c) < -\alpha.
\end{equation*}
Or equivalently, for every $0<\delta<1$, there exists a compact set $K\subset \X$ and $\e_0>0$, such that for all $0<\e<\e_0$,
\begin{equation*}
  [\mu_\e(K^c)]^\e < \delta.
\end{equation*}
A family of $\X$-valued random elements $\{X^\e\}_{\e>0}$ is said to exponentially tight if the family of induced probability measures on $\X$ is exponentially tight. In the case of $\X=\R^d$, the exponential tightness is equivalent to the exponential stochastic boundedness found in \cite[Definition 3.2]{Gar08}.

It is well known that the Skorokhod space $\D_d$ is a Polish space. Hence, the exponential tightness of a family of probability measures on $(\D_d,\B(\D_d))$ is implied by the LDP with good rate function (see, e.g., \cite[Section 1.2]{DZ98}). For $\rho>0$ and $T>0$, denote by $\Delta_\rho[0,T]$ the set of all partitions $\{t_i\}_{i=0}^k$, $k\in\N_+$, such that $0\le t_0\le t_1\le \cdots \le t_k = T$ and $t_j-t_{j-1}>\rho$ for all $j=1,2,\cdots,k-1$. For $x = \{x(t)\}_{t\ge0}\in \D_d$, define
\begin{align*}
  w(x,I) &= \sup_{s,t\in I} |x(s)-x(t)|, \quad \text{for } I \text{ an interval of } \R_+, \\
  w_T(x,\rho) &= \inf_{\{t_i\}\in\Delta_\rho[0,T]} \max_{1\le i\le k} w(x,[t_{i-1},t_i)).
\end{align*}

The following proposition is a criterion for the exponential tightness of probability measures in $\D_d$, referring to \cite[Theorem 4.2]{Puh91}.
\begin{lemma}\label{exp-tight}
  A family of probability measure $\{\mu_\e\}_{\e>0}$ on $\D_d$ is exponentially tight if and only if

  (i). for all $T>0$,
  \begin{equation*}
    \lim_{a\to\infty} \limsup_{\e\to0} \e\log \mu_\e\left( x\in \D_d: \sup_{0\le t\le T}|x(t)|\ge a \right) = -\infty,
  \end{equation*}

  (ii). for all $T>0$ and $\eta>0$,
  \begin{equation*}
    \lim_{\rho\to0} \limsup_{\e\to0} \e \log \mu_\e\left( x\in \D_d: w_T(x,\rho) \ge\eta \right) = -\infty.
  \end{equation*}
\end{lemma}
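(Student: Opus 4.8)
The plan is to read off the statement from the Arzel\`a--Ascoli-type characterization of compactness in the Skorokhod space: a set $A\subseteq\D$ has compact closure if and only if, for every $T>0$, both $\sup_{x\in A}\sup_{0\le t\le T}|x(t)|<\infty$ and $\lim_{\rho\to0}\sup_{x\in A}w_T(x,\rho)=0$ hold (see, e.g., \cite[VI.1.14b]{JS13}). Conditions (i) and (ii) are precisely the $\e\log$-scale renderings of these two requirements, so the proof is a transfer of the characterization through the large-deviation asymptotics, in complete analogy with the classical criterion for tightness on $\D$ (where $\limsup_{\e\to0}\e\log\mu_\e$ is replaced by $\sup_{\e}\mu_\e$).

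First I would treat the direction ``exponential tightness $\Rightarrow$ (i) and (ii)''. Fix $T>0$. For each $M<\infty$, exponential tightness provides a compact $K_M\subseteq\D$ with $\limsup_{\e\to0}\e\log\mu_\e(K_M^c)\le-M$. By the characterization, $a_M:=1+\sup_{x\in K_M}\sup_{t\le T}|x(t)|<\infty$, and given $\eta>0$ there is $\rho_M>0$ with $\sup_{x\in K_M}w_T(x,\rho_M)<\eta$. Since $\{x:\sup_{t\le T}|x(t)|\ge a_M\}\subseteq K_M^c$ and $\{x:w_T(x,\rho_M)\ge\eta\}\subseteq K_M^c$, each probability is at most $\mu_\e(K_M^c)$, so its $\limsup_{\e\to0}\e\log$ is $\le-M$. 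Because $a\mapsto\mu_\e(\sup_{t\le T}|x(t)|\ge a)$ is non-increasing and $\rho\mapsto\mu_\e(w_T(x,\rho)\ge\eta)$ is non-decreasing, and $M$ is arbitrary, this yields (i) and (ii).

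For the converse, fix $\alpha<\infty$. Using (i) with $T=N$ I would pick, for each $N\in\N$, a level $a_N$ with $\limsup_{\e\to0}\e\log\mu_\e(\sup_{t\le N}|x(t)|\ge a_N)\le-(\alpha+N)$, and using (ii) with $T=N$ and $\eta=1/m$, a mesh $\rho_{N,m}$ with $\limsup_{\e\to0}\e\log\mu_\e(w_N(x,\rho_{N,m})\ge 1/m)\le-(\alpha+N+m)$. Let $K$ be the closure of
\[
\big\{x\in\D:\ \sup_{t\le N}|x(t)|\le a_N\ \text{ for all }N,\quad w_N(x,\rho_{N,m})\le 1/m\ \text{ for all }N,m\big\};
\]
$K$ is compact by the characterization. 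The union bound gives
\[
\mu_\e(K^c)\ \le\ \sum_{N}\mu_\e\Big(\sup_{t\le N}|x(t)|\ge a_N\Big)\ +\ \sum_{N,m}\mu_\e\big(w_N(x,\rho_{N,m})\ge 1/m\big),
\]
and the elementary inequality $\e\log(p+q)\le\e\log 2+\max(\e\log p,\e\log q)$ lets $\limsup_{\e\to0}\e\log$ pass through \emph{finite} sums as a maximum. It then remains to check that the infinite sums do not spoil this, after which $\limsup_{\e\to0}\e\log\mu_\e(K^c)\le-\alpha$ follows, i.e.\ $\{\mu_\e\}$ is exponentially tight.

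I expect this last point to be the main obstacle. The per-set estimates are only asymptotic in $\e$, so they do not bound the countably infinite sum uniformly in $\e$; to close the gap one chooses the parameters $a_N,\rho_{N,m}$ so that the target exponents $-(\alpha+N)$, $-(\alpha+N+m)$ are summable with room to spare, and exploits that any c\`adl\`ag path is bounded on each compact interval with $w_N(x,\rho)\to 0$ as $\rho\to 0$, so that for the purposes of the $\e\to0$ limit only finitely many constraints are ``active'' and the tail is $\e\log$-negligible. Equivalently, one may first establish exponential tightness of the restrictions of $\{\mu_\e\}$ to $\D([0,T])$ for every $T$ and then assemble from them a compact subset of $\D(\R_+)$, the assembly being the delicate step. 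In either route this is exactly the combinatorial core secured by \cite[Theorem 4.2]{Puh91}, which I would invoke.
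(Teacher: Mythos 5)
Your necessity direction is fine: it uses exactly the Arzel\`a--Ascoli-type description of relatively compact subsets of $\D$ (\cite[Theorem VI.1.14]{JS13}) plus the inclusion of the two ``bad'' events in $K_M^c$, and the monotonicity remarks close it. Note, though, that the paper offers no proof of Lemma \ref{exp-tight} at all --- it states it ``referring to \cite[Theorem 4.2]{Puh91}'' --- so this easy direction is the only part of your text that goes beyond what the paper does, and both directions of the equivalence are actually used later (sufficiency in Proposition \ref{Slominski}.(i), necessity in Lemma \ref{UET-exp-tight}).

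The genuine gap is in the sufficiency direction, and it is precisely the point you flag but do not resolve. Choosing $a_N$, $\rho_{N,m}$ with target exponents $-(\alpha+N)$, $-(\alpha+N+m)$ only gives, for each constraint index $j$, a threshold $\e_j>0$ with $\mu_\e(A_j)\le e^{-(\alpha+j)/\e}$ for $\e<\e_j$, and nothing prevents $\e_j\downarrow 0$ as $j\to\infty$. For a fixed small $\e$, the union bound over the countably many constraints then contains infinitely many terms (those $j$ with $\e_j\le\e$) that are bounded only by $1$, so $\limsup_{\e\to0}\e\log\mu_\e(K^c)\le-\alpha$ does not follow; summability of the target exponents and the fact that each individual c\`adl\`ag path is bounded on compacts with $w_N(x,\rho)\to0$ are statements about paths, not about the measures $\mu_\e$ uniformly in $\e$ below a $j$-independent threshold, so the ``only finitely many constraints are active'' heuristic does not repair the estimate. (It can be repaired along a fixed sequence $\e_k\downarrow0$: for each $j$ only finitely many $k$ lie below the threshold, and since each single $\mu_{\e_k}$ is a Borel probability on the Polish space $\D$ one may enlarge $a_N$ and shrink $\rho_{N,m}$ to cover those finitely many $k$; but for the family indexed by all small $\e$, as in the statement, this boosting is unavailable and a different argument is required.) Finally, closing the hole by ``invoking \cite[Theorem 4.2]{Puh91}'' is circular as a proof, since that theorem \emph{is} the statement being proved --- although it does coincide with what the paper itself does, namely quote Puhalskii without proof.
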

For each $\e>0$ we have a $d$-dimensional c\`adl\`ag process $X^\e$ on a filtered probability space $(\Omega^\e,\F^\e,\{\F^\e_t\}_{t\ge0},\P^\e)$. Let $\mu_\e = \P^\e \circ (X^\e)^{-1}$ be the probability measure on $\D_d$ induced by $X^\e$. In this case, the condition (i) in Lemma \ref{exp-tight} is equivalent to say that for each $T>0$, the family of random variables $\{\sup_{0\le t\le T}|X^\e_t|\}_{\e>0}$ is exponential tight, which is referred to as the exponential compact containment condition in \cite[Remark 4.5]{FK06}.

The following proposition is an analog of the S{\l}omi\'nski's criterion for tightness (\cite[Proposition 2]{Slo89} or \cite[Section VI.7]{JS13}). It will be used to separate the joint exponential tightness, and play a role in proving the main result.

\begin{proposition}[S{\l}omi\'nski-type criterion for exponential tightness]\label{Slominski}
  (i). Assume the family of probability measure $\{\mu_\e = P^\e \circ (X^\e)^{-1}\}_{\e>0}$ satisfies Lemma \ref{exp-tight}.(i). If there exists a family of strictly increasing sequences of random times $\{T^{\e,p}_i\}_{i=0}^\infty$ for each $\e>0, p\in\N_+$ with $T^{\e,p}_0=0$ and $\lim_{i\to\infty}T^{\e,p}_i = \infty$, and strictly positive constants $\rho_p^N$ for each $p,N\in\N_+$, such that
  \begin{equation}\label{exp-tight-1}
    \lim_{p\to\infty} \limsup_{\e\to0} \e\log \P^\e \left( \inf_{i:T^{\e,p}_{i+1}\le N} \left(T^{\e,p}_{i+1}-T^{\e,p}_i \right) \le \rho_p^N \right) = -\infty, \text{ for all } N>0,
  \end{equation}
  and
  \begin{equation}\label{exp-tight-2}
    \lim_{p\to\infty} \limsup_{\e\to0} \e\log \P^\e \left( \sup_{i} w\left(X^\e,[T^{\e,p}_i,T^{\e,p}_{i+1}) \cap [0,N]\right) \ge \eta \right) = -\infty,  \text{ for all } N>0, \eta>0,
  \end{equation}
  then $\{X^\e\}_{\e>0}$ is exponentially tight.

  (ii). If the family $\{X^\e\}_{\e>0}$ satisfies an LDP with a good rate function, then Lemma \ref{exp-tight}.(i) holds. Moreover, there are positive constants $\rho_p^N$ and $a(\e,p,i)\in(\frac{1}{2p},\frac{1}{p}]$, such that \eqref{exp-tight-1} and \eqref{exp-tight-2} are satisfied by the following stopping times, defined recursively on $i$ by $T^{\e,p}_0 = 0$ and
  \begin{equation}\label{random-times}
    T^{\e,p}_{i+1} = \inf\left\{t>T^{\e,p}_i: |X^\e_t - X^\e_{T^{\e,p}_i}| \ge a(\e,p,i) \text{ or } |X^\e_{t-} - X^\e_{T^{\e,p}_i}| \ge a(\e,p,i) \right\}.
  \end{equation}
\end{proposition}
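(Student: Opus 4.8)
The two parts are essentially converses of each other, and the plan is to treat part (i) as an application of the exponential-tightness criterion Lemma \ref{exp-tight}, and part (ii) as a careful construction of the stopping times \eqref{random-times} together with an estimate of their spacing from the LDP upper bound.

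For part (i), the goal is to deduce Lemma \ref{exp-tight}.(ii) (condition (i) of that lemma is assumed outright). Fix $T>0$ and $\eta>0$. The key geometric observation is the classical one underlying S{\l}omi\'nski's criterion: if the random times $\{T^{\e,p}_i\}$ happen to be spaced apart by more than some $\rho>0$ on $[0,N]$ (with $N$ slightly larger than $T$, say $N=T+1$), then the restriction of $\{T^{\e,p}_i\}\cap[0,T]$ is an admissible partition in $\Delta_{\rho}[0,T]$ after an obvious adjustment at the last point, and hence $w_T(X^\e,\rho)\le \sup_i w(X^\e,[T^{\e,p}_i,T^{\e,p}_{i+1})\cap[0,N])$. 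Therefore, for $\rho\le\rho_p^N$,
\begin{equation*}
  \P^\e\big(w_T(X^\e,\rho)\ge\eta\big)
  \le \P^\e\Big( \inf_{i:T^{\e,p}_{i+1}\le N}\big(T^{\e,p}_{i+1}-T^{\e,p}_i\big)\le\rho_p^N\Big)
  + \P^\e\Big(\sup_i w\big(X^\e,[T^{\e,p}_i,T^{\e,p}_{i+1})\cap[0,N]\big)\ge\eta\Big).
\end{equation*}
Taking $\e\log$, using the elementary inequality $\log(a+b)\le \log 2 + \log(a\vee b)$, then $\limsup_{\e\to0}$ and finally $\lim_{p\to\infty}$, the right-hand side tends to $-\infty$ by \eqref{exp-tight-1} and \eqref{exp-tight-2}; since the left-hand side is monotone in $\rho$, this yields $\lim_{\rho\to0}\limsup_{\e\to0}\e\log\P^\e(w_T(X^\e,\rho)\ge\eta)=-\infty$, which is Lemma \ref{exp-tight}.(ii). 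Then Lemma \ref{exp-tight} gives exponential tightness.

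For part (ii), assume the LDP with good rate function $J$ on $\D$. That the family is exponentially tight — hence Lemma \ref{exp-tight}.(i),(ii) hold — is standard on the Polish space $\D$, as already noted before Lemma \ref{exp-tight}. The substance is to show the specific stopping times \eqref{random-times} work. By construction each increment of $X^\e$ (together with the size of its jump) on $[T^{\e,p}_i,T^{\e,p}_{i+1})$ is at most $2a(\e,p,i)\le 2/p$, so \eqref{exp-tight-2} holds trivially for $p$ large ($2/p<\eta$), the probability being $0$. The real work is \eqref{exp-tight-1}: one must choose the radii $a(\e,p,i)\in(\tfrac1{2p},\tfrac1p]$ and the constants $\rho_p^N$ so that it is exponentially unlikely that two consecutive $T^{\e,p}_i$ are within $\rho_p^N$ of each other on $[0,N]$. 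This is where the LDP, or rather the exponential tightness it implies, is used: by Lemma \ref{exp-tight}.(ii), for any $\eta'>0$ there is $\rho'>0$ with $\limsup_\e \e\log\P^\e(w_N(X^\e,\rho')\ge\eta')$ small; taking $\eta'=\tfrac1{2p}$, whenever $w_N(X^\e,\rho')<\tfrac1{2p}$ the oscillation of $X^\e$ over any interval of length $\le\rho'$ is less than $\tfrac1{2p}<a(\e,p,i)$, which forces $T^{\e,p}_{i+1}-T^{\e,p}_i>\rho'$ for every $i$ with $T^{\e,p}_{i+1}\le N$ — so one sets $\rho_p^N$ to be this $\rho'$. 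The only delicacy is the dependence of the threshold $a(\e,p,i)$ on $i$: near the final partition interval $[t_{k-1},t_k]=[t_{k-1},N]$ of the admissible partition realizing $w_N$, the interval may be shorter than $\rho'$, and one uses the freedom in choosing $a(\e,p,i)\in(\tfrac1{2p},\tfrac1p]$ (in fact a random, adapted choice along the recursion) to absorb this boundary effect, exactly as in the weak-convergence proof of \cite{Slo89}. Assembling: $\P^\e(\inf_{i:T^{\e,p}_{i+1}\le N}(T^{\e,p}_{i+1}-T^{\e,p}_i)\le\rho_p^N)\le \P^\e(w_N(X^\e,\rho_p^N)\ge\tfrac1{2p})$, and the right side satisfies \eqref{exp-tight-1} upon sending $p\to\infty$ by Lemma \ref{exp-tight}.(ii) with $\eta'=\tfrac1{2p}\to0$.

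The main obstacle I anticipate is the bookkeeping in part (ii) tying the $i$-dependent radii $a(\e,p,i)$ to the partition points of $w_N(X^\e,\cdot)$: one must verify that the recursive definition \eqref{random-times} does not produce, with non-negligible probability, a cascade of short intervals accumulating before time $N$, and this requires the choice of $a(\e,p,i)$ to be made measurably at time $T^{\e,p}_i$ in a way compatible with the admissible partition — a point that is routine but fiddly, and is the reason the statement allows $a(\e,p,i)$ to vary rather than fixing it at $1/p$. Everything else (the Polish-space consequences of the LDP, the oscillation comparison, the $\e\log$ manipulations) is standard.
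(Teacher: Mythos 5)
Part (i) of your proposal is correct and is essentially the paper's argument: the union bound over the two events, the observation that the $T^{\e,p}_i$'s restricted to $[0,N]$ form an admissible partition (the definition of $\Delta_\rho[0,T]$ deliberately exempts the last interval from the spacing requirement), and the $\e\log$ manipulation. Your treatment of \eqref{exp-tight-2} in part (ii) also matches the paper: $w(X^\e,[T^{\e,p}_i,T^{\e,p}_{i+1}))\le 2a(\e,p,i)\le 2/p$ makes the probability vanish once $2/p<\eta$.

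Your proof of \eqref{exp-tight-1}, however, has a genuine gap. The pathwise implication ``$w_N(X^\e,\rho')<\tfrac1{2p}$ forces $T^{\e,p}_{i+1}-T^{\e,p}_i>\rho'$'' is false for c\`adl\`ag paths. From $w_N(x,\rho')<\tfrac1{2p}$ you only get that consecutive stopping times lie in \emph{different} intervals of the realizing partition $\{t_j\}$; you do not get a lower bound on their separation, because $T^{\e,p}_i$ can land in the \emph{interior} of an interval $[t_{j-1},t_j)$ arbitrarily close to $t_j$ (the cumulative increment from $X^\e_{T^{\e,p}_{i-1}}$ may cross the level $a(\e,p,i-1)$ anywhere inside that interval), and then a jump of size about $2/p$ at $t_j$ triggers $T^{\e,p}_{i+1}=t_j$ immediately afterwards. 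One can write down explicit paths with $w_N(x,\rho')<\tfrac1{2p}$ and two consecutive stopping times at distance $\rho'/2$, for every admissible choice of the levels in $(\tfrac1{2p},\tfrac1p]$; so the freedom in $a(\e,p,i)$ does not ``absorb'' this (and the statement requires the $a(\e,p,i)$ to be constants, not adapted random choices). Note also that your route uses only exponential tightness, while the hypothesis is the full LDP — and the paper really needs it. The paper's actual argument is different: the levels $a(\e,p,i)$ are chosen in a full-measure set so that, by \cite[Propositions VI.2.11--2.12]{JS13}, the map $X^\e\mapsto T(\e,p,N)=\inf_{i}(T^{\e,p}_{i+1}-T^{\e,p}_i)$ is continuous; the contraction principle then gives an LDP for $T(\e,p,N)$ on $(0,N]$ with a \emph{good} rate function $I_{p,N}$, and compactness of the level sets forces $I_{p,N}(t)\to\infty$ as $t\to0^+$, which permits choosing $\rho_p^N$ with $\inf_{t\le\rho_p^N}I_{p,N}(t)\ge p$ and hence \eqref{exp-tight-1}. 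In short, the purpose of the generic choice of $a(\e,p,i)$ is continuity of the stopping-time functionals for the contraction principle, not a boundary adjustment of the partition.
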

\begin{proof}

  (i). Denote $T(\e,p,N):=\inf_{i:T^{\e,p}_{i+1}\le N} \left(T^{\e,p}_{i+1}-T^{\e,p}_i \right)$ and
  $$A(\e,p,N,\eta):=\left\{ \sup_{i} w\left(X^\e,[T^{\e,p}_i,T^{\e,p}_{i+1}) \cap [0,N]\right) \ge \eta \right\}.$$
  Fix $N>0$, $\eta>0$ and $\delta>0$. By \eqref{exp-tight-1} and \eqref{exp-tight-2}, there exist $p_0>0$ and $\e_0>0$ such that for all $p\ge p_0$ and $0<\e\le\e_0$,
  \begin{equation*}
    \left[ \P^\e\left(T(\e,p,N)\le \rho_p^N\right)\right]^\e < \delta/2, \quad \left[ \P^\e(A(\e,p,N,\eta))\right]^\e < \delta/2.
  \end{equation*}
  Let $\e_1 = \e_0 \wedge 1$. Then for all $0<\e\le\e_1$,
  \begin{equation}\label{est-1}
    \left[ \P^\e\left( \left\{T(\e,p,N)\le \rho_p^N \right\} \cup A(\e,p,N,\eta)\right)\right]^\e < \delta.
  \end{equation}
  For each $\omega\in \left\{T(\e,p,N)> \rho_p^N \right\} \cap A(\e,p,N,\eta)^c$, denoted $k(\omega):=\max\{i: T^{\e,p}_{i+1} \le N\}$. Then for all $0\le i\le k$, $T^{\e,p}_{i+1}-T^{\e,p}_i > \rho_p^N$ and $w\left(X^\e,[T^{\e,p}_i,T^{\e,p}_{i+1})\right) < \eta$. Hence, $\omega\in \{w_N(X^\e,\rho_p^N) < \eta\}$. Due to \eqref{est-1}, we have
  \begin{equation*}
    \left[ \P^\e\left( w_N(X^\e,\rho_p^N) \ge \eta \right)\right]^\e < \delta.
  \end{equation*}
  The exponential tightness follows from Lemma \ref{exp-tight}.

  (ii). Assume the family $\{X^\e\}$ to satisfy an LDP with a good rate function. Define for $a>0$ and $x\in\D_d$,
  \begin{align*}
    S_a(x) &:= \inf\{t\ge0: |x(t)|\ge a \text{ or } |x(t-)|\ge a\}, \\
    S_{a+}(x) &:= \inf\{t\ge0: |x(t)|> a \text{ or } |x(t-)|> a\}, \\
    V(x) &:= \{a>0: S_a(x)<S_{a+}(x)\}, \\
    V'(x) &:= \{a>0: \Delta x(S_a(x))\ne0, |x(S_a(x)-)| = a \}.
  \end{align*}
  We will construct $a(\e,p,i)$ and $\rho_p^N$ using the random times $T^{\e,p}_{i}$ defined in \eqref{random-times}. Set
  \begin{equation*}
    U(\e,p,i):= V(X^\e-X^\e_{\cdot \wedge T^{\e,p}_i}) \cup V'(X^\e-X^\e_{\cdot \wedge T^{\e,p}_i}),
  \end{equation*}
  Then using \cite[Lemma VI.2.10]{JS13}, we know that for each $\omega\in\Omega^\e$, $U(\e,p,i)(\omega)$ is an at most countable subset of $\R_+$. It follows that the set
  \begin{equation*}
    V(\e,p,i) := \left\{ a>0: \P^\e\left( a\in U(\e,p,i) \right) =0 \right\}
  \end{equation*}
  has full measure in $\R_+$. So we can choose $a(\e,p,i)\in V(\e,p,i) \cap (\frac{1}{2p},\frac{1}{p}]$. By the definition \eqref{random-times} of $T^{\e,p}_i$, we know that $w\left(X^\e,[T^{\e,p}_i,T^{\e,p}_{i+1})\right) \le 2a(\e,p,i) \le 2/p$. Then for each $N$ and $\eta$,
  \begin{equation*}
    \lim_{p\to\infty} \limsup_{\e\to0} \e\log \P^\e( A(\e,p,N,\eta) ) \le \lim_{p\to\infty} \limsup_{\e\to0} \e\log \P^\e( 2/p \ge \eta ) = -\infty.
  \end{equation*}
  And \eqref{exp-tight-2} follows.

  Using the above notations, we have $T^{\e,p}_{i+1} = S_{a(\e,p,i)}(X^\e-X^\e_{\cdot \wedge T^{\e,p}_i})$. Applying induction in $i$ and \cite[Proposition VI.2.11, VI.2.12]{JS13}, it is easy to see that for each $p$, the mapping
  \begin{equation*}
    X^\e \mapsto (T^{\e,p}_0,T^{\e,p}_1,\cdots,T^{\e,p}_i,\cdots)
  \end{equation*}
  is continuous. And hence, for each $p$ and $N$, the mapping
  \begin{equation*}
    X^\e \mapsto T(\e,p,N):=\inf_{i:T^{\e,p}_{i+1}\le N} \left(T^{\e,p}_{i+1}-T^{\e,p}_i \right)
  \end{equation*}
  is also continuous. Now the contraction principle in the theory of LDP (see, e.g., \cite[Theorem 4.2.1]{DZ98}) tell us that the family of random times $\{T(\e,p,N)\}_{\e>0}$ satisfies an LDP with some good rate function, say $I_{p,N}:(0,N]\to[0,\infty]$ for each $p$ and $N$.

  We now show that $\lim_{t\to0^+} I_{p,N}(t) = \infty$ by contradiction, using the goodness of $I_{p,N}$. Suppose conversely that $\liminf_{t\to0^+} I_{p,N}(t) = L$ for some $L<\infty$. Then there is a subsequence $\{t_i\}_{i=1}^\infty$ such that $t_i \downarrow 0$ and $I_{p,N}(t_i)\to L$ as $i\to\infty$. It follows that for each $\delta>0$, $\{t_i\}_{i=M}^\infty \subset \Phi_{I_{p,N}}(L+\delta)$ for some $M\in\N_+$, where we use $\Phi$ as before to denote the level set. This produce a contradiction, since each level set of $I_{p,N}$ is compact due to the goodness.

  Hence, for each $N$, we can select a sequence of positive reals $\{\rho_p^N\}_{p=1}^\infty$ which goes to 0 as $p\to\infty$, such that
  $$\inf_{t\le \rho_p^N} I_{p,N}(t)\ge p.$$
  Then
  \begin{equation*}
    \lim_{p\to\infty} \limsup_{\e\to0} \e\log \P^\e \left( T(\e,p,N) \le \rho_p^N \right) \le - \lim_{p\to\infty} \inf_{t\le \rho_p^N} I_{p,N}(t) = -\infty,
  \end{equation*}
  which yields \eqref{exp-tight-1}.
\end{proof}

\section{Uniform exponential tightness}\label{sec-3}

In this section, we will revisit the notion of uniform exponential tightness proposed first in \cite{Gar08}. It is an analogy of predictable uniform tightness in the context of weak convergence (see, e.g., \cite{JS13,Slo89}). We will seek various sufficient conditions for the uniform exponential tightness for local martingales and processes with locally finite variation. Combining these conditions, we will obtain a useful criterion (Proposition \ref{exp-tight-general}) for the uniform exponential tightness of general semimartingales, in terms of their characteristics. As by-products, the relations between the uniform exponential tightness of processes and the exponential tightness of their functionals are developed as much as we can, which may play a role in future works.


Firstly, let us recall some basic notions in the theory of semimartingales, referring to \cite[Chapter II]{JS13} and \cite[Chapter IX]{HWY92}. A \emph{semimartingale} $X$ is a process of the form $X = B + M$, where $B$ is a c\`adl\`ag adapted process with locally finite variation and $M$ is a local martingale with $M_0=0$. This decomposition is of course not unique. However, it is unique (in the indistinguishable sense) when $B$ is predictable. In this case, the semimartingale $X$ is called \emph{special} and the unique decomposition is called the \emph{canonical decomposition}. The \emph{quadratic variation} (sharp bracket) of semimartingale $X$ is denoted by $[X,X]$. If $M$ is a locally square-integral martingale, we denote by $\langle M,M\rangle$ its \emph{predictable quadratic variation} (angle bracket). We use the symbol $\Delta X$ to denote the process $\Delta X_t := X_t - X_{t-}$.

For a fixed truncation function $h$ and a semimartingale $X$ (which is not necessarily special), define $\check X(h): = \sum_{s\le\cdot}(\Delta X_s - h(\Delta X_s))$ and $X(h): = X - \check X(h)$. Then $X(h)$ is obviously a special semimartingale, which has a unique decomposition, say $X(h) = B(h) + M(h)$. We call the decomposition $X = \check X(h) + B(h) + M(h)$ the \emph{canonical decomposition associated to} $h$ for the semimartingale $X$. The local martingale $M(h)$ admits a unique (up to indistinguishability) decomposition $M(h) = M(h)^c + M(h)^d$, where $M(h)^c$ is a continuous local martingale and $M(h)^d$ is a purely discontinuous local martingale. There is a unique (up to indistinguishability) continuous local martingale $X^c$ such that any canonical decomposition associated to $h$ for $X$ meets $X^c = M(h)^c$. The process $X^c$, which is independent of the choice of $h$, is called \emph{continuous martingale part} of $X$. We can associated to a c\`adl\`ag adapted process $X$ a integer-valued random measure $\mu^X(dx,dt) := \sum_s\ind_{\{\Delta X_s\ne 0\}} \delta_{(\Delta X_s,s)}(dx,dt)$, which we call the \emph{jump measure} of $X$. The dual predictable projection (compensator) of $\mu^X$ is called the \emph{L\'evy system} of $X$. A semimartingale $X$ is said to be \emph{quasi-left-continuous} if there exists a version of its L\'evy system $\nu^X$ that satisfies identically $\nu^X(\R^d\times\{t\}) = 0$. We call \emph{characteristics} associated with $h$ of $X$ the triplet $(B(h),C,\nu^X)$, with $C = \langle X^c,X^c\rangle$ and $\nu^X$ the L\'evy system of $X$.

For a semimartingale $X$ and an adapted process $F$, we denote by $F_- \cdot X$ the stochastic integral $\int_0^\cdot F_{s-} dX_s$. For a random field $G$ on $\R_+ \times \R^d$ and a random measure $\mu$, we denote by $G*\mu$ the integral process $G*\mu_t:=\int_0^t\int_{\R^d} G(s,x)\mu(dx,ds)$, if the integral is well-defined.

The following exponential estimate for purely discontinuous local martingales will be useful.
\begin{lemma}\label{pure-disc}
  Let $M$ be an one-dimensional purely discontinuous local martingale starting at $0$, satisfying $|\Delta M| \le A$ with some constant $A>0$. Let $\mu^M$ be the jump measure of $M$. Then there exists a constant $c>0$, such that for all $t>0$, and every $a,b>0$,
  \begin{equation*}
    \P\left( \sup_{0\le s\le t}|M_s|\ge a, |x|^2 * \mu^M_t <b \right) \le 2\exp\left(-\frac{a^2}{4cb}\right).
  \end{equation*}
\end{lemma}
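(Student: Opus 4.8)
The plan is to exploit an exponential martingale inequality for the purely discontinuous local martingale $M$, localized on the event $\{|x|^2*\mu^M_t<b\}$. First I would pass to a stopping-time localization: set $\tau_b:=\inf\{s:|x|^2*\mu^M_s\ge b\}$ and work with the stopped martingale $M^{\tau_b}$. On the event $\{|x|^2*\mu^M_t<b\}$ we have $M_s=M^{\tau_b}_s$ for all $s\le t$, so it suffices to bound $\P(\sup_{s\le t}|M^{\tau_b}_s|\ge a)$. The point of the stopping is that now the purely discontinuous martingale $M^{\tau_b}$ has the "energy" $\langle \text{jump part}\rangle$ controlled: its quadratic variation $[M^{\tau_b},M^{\tau_b}]_t = |x|^2*\mu^{M^{\tau_b}}_t \le b + A^2$ (the extra $A^2$ absorbing the jump at time $\tau_b$), and since the jumps are bounded by $A$ this will be enough to run an exponential estimate.

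The key step is the exponential supermartingale. For $\theta>0$ consider the stochastic exponential $Z_s:=\exp(\theta M^{\tau_b}_s - (\text{compensator of } \sum (e^{\theta\Delta M}-1-\theta\Delta M)))$. Using the elementary inequality $e^u-1-u\le \tfrac{u^2}{2}e^{|u|}$ and $|\theta\Delta M|\le \theta A$, the compensator is dominated by $\tfrac{\theta^2}{2}e^{\theta A}\,|x|^2*\mu^{M^{\tau_b}}_t \le \tfrac{\theta^2}{2}e^{\theta A}(b+A^2)$. Hence $\exp(\theta M^{\tau_b}_s)$ is, up to the deterministic factor $\exp(\tfrac{\theta^2}{2}e^{\theta A}(b+A^2))$, bounded by a positive supermartingale started at $1$. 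Doob's maximal inequality for the nonnegative supermartingale $Z$ then gives
\begin{equation*}
  \P\left(\sup_{0\le s\le t} M^{\tau_b}_s \ge a\right) \le \exp\!\left(-\theta a + \tfrac{\theta^2}{2}e^{\theta A}(b+A^2)\right),
\end{equation*}
and the same bound for $-M$ by symmetry, costing a factor $2$. Optimizing (or simply choosing) $\theta$ of order $a/b$ — more precisely $\theta = a/(c\,b)$ for a suitable absolute constant and restricting attention to the regime where this keeps $\theta A$ bounded — yields a bound of the form $2\exp(-a^2/(4Cb))$ with $C$ depending only on $A$. The small-$b$ or large-$a$ regimes where $\theta A$ would blow up are handled trivially, since there the claimed bound exceeds $1$ (note the statement allows $a,b$ arbitrary, so for $a^2 \le 4Cb\log 2$ the right side is $\ge 2 > 1$ and nothing is to prove; one tunes $C$ so that the genuine regime is exactly where $\theta A \lesssim 1$).

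The main obstacle is bookkeeping the constants so that a single $C$, depending only on $A$ (not on $t$, $a$, $b$), works across all regimes: the exponent $e^{\theta A}$ couples the choice of $\theta$ to $A$, and one must check that choosing $\theta\asymp a/b$ is legitimate precisely on the range of $(a,b)$ where the asserted inequality is not vacuous. A secondary technical point is justifying that $Z$ is a genuine supermartingale (not merely a local one): this follows because $Z$ is nonnegative and, being dominated by the deterministic constant times a local martingale's exponential with bounded jumps, is locally bounded, so Fatou along a localizing sequence upgrades the local-supermartingale property to the supermartingale property. With those two checks in place the estimate is immediate.
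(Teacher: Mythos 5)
There is a genuine gap at the heart of your exponential-supermartingale step. You bound ``the compensator of $\sum_s(e^{\theta\Delta M_s}-1-\theta\Delta M_s)$'' by $\tfrac{\theta^2}{2}e^{\theta A}\,|x|^2*\mu^{M^{\tau_b}}_t\le\tfrac{\theta^2}{2}e^{\theta A}(b+A^2)$, but that compensator is $(e^{\theta x}-1-\theta x)*\nu^{M^{\tau_b}}$ with $\nu$ the \emph{predictable} compensator of the jump measure, so the pointwise inequality $e^u-1-u\le\tfrac{u^2}{2}e^{|u|}$ only dominates it by $\tfrac{\theta^2}{2}e^{\theta A}\,|x|^2*\nu^{M^{\tau_b}}_t$, i.e.\ by the predictable bracket of the jump part. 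Neither the event $\{|x|^2*\mu^M_t<b\}$ nor stopping at $\tau_b$ controls this quantity: $\tau_b$ is built from the optional quadratic variation $[M,M]=|x|^2*\mu^M$, and $[M,M]_t$ small does not force $\langle M,M\rangle_t$ small (for a compensated Poisson process, $[M,M]_t=N_t$ while $|x|^2*\nu^M_t=\lambda t$). If, to escape this, you subtract the raw sum instead of its compensator, then $Z$ is no longer a supermartingale, because the correction that turns $e^{\theta M}$ into a local martingale is $\sum_s(\theta\Delta M_s-\log(1+\theta\Delta M_s))$, not $\sum_s(e^{\theta\Delta M_s}-1-\theta\Delta M_s)$. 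The paper's proof is engineered precisely to avoid this trap: it uses the Dol\'eans--Dade exponential $\mathcal E(\theta M)_t=\exp\left(\theta M_t-(x-\log(1+x))*\mu^{\theta M}_t\right)$, whose correction term is a functional of the \emph{realized} jumps and is therefore bounded by $C\theta^2\,|x|^2*\mu^{M}_t<C\theta^2 b$ on the event in question, via $|x-\log(1+x)|\le C|x|^2$ for $|x|\le 1/2$ (this is where the restriction $\theta\le\frac{1}{2A}$ enters). You should rebuild the argument around $\mathcal E(\pm\theta M)$ stopped at $\tau_b$ together with Doob's inequality for nonnegative supermartingales.

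Your regime analysis also does not close. The asserted bound $2e^{-a^2/(4Cb)}$ is vacuous only when $b\ge a^2/(4C\log 2)$, whereas the constraint $\theta\le\frac{1}{2A}$ prevents the choice $\theta\asymp a/b$ as soon as $b\lesssim aA$; for $a\gg A$ these two regions are disjoint, so there is a nonvacuous range of $(a,b)$ in which the constrained optimization only yields $2e^{-ca/A}$, not the Gaussian bound. This cannot be fixed by tuning $C$: for the compensated Poisson process with $b=1/2$ and $a=\lambda t$, the left-hand side equals $e^{-\lambda t}$, which eventually exceeds $2e^{-(\lambda t)^2/(2C)}$ for any fixed $C$. (The paper's own last line, ``by choosing $\theta\le\frac{1}{2A}\wedge\frac{a}{2Cb}$ the result follows,'' silently skips the same case; fortunately, in its application the lemma is only invoked with $b$ replaced by $b\epsilon$ and $a$ fixed, i.e.\ in the regime where the unconstrained optimizer is admissible.) A correct statement either lets $C$ absorb $A$ in the form $2\exp\left(-c\left(\frac{a^2}{b}\wedge\frac{a}{A}\right)\right)$, or restricts to $a\le b/(2CA)$ so that $\theta=a/(2Cb)\le\frac{1}{2A}$.
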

\begin{proof}
  Let $\theta$ be a constant less than $\frac{1}{2A}$. Then $|\Delta(\theta M)| \le \frac{1}{2}$ and the random measure $\mu^{\theta M}$ associated to $\theta M$ is supported in the closed ball $\bar B(0,\frac{1}{2})$. Since $\theta M$ is a purely discontinuous local martingale, its stochastic exponential
  \begin{equation*}
    \mathcal E(\theta M)_t := \exp\left( \theta M_t- (x-\log(1+x)) * \mu^{\theta M}_t \right)
  \end{equation*}
  is a local martingale (see \cite[Theorem I.4.61]{JS13}). Define a stopping time $T = \inf\{t\ge0: |x|^2 * \mu^M_t \ge b\}$. Then the stopped processes $\mathcal E(\theta M)_{\cdot\wedge T} = \mathcal E(\theta M_{\cdot\wedge T})$ is a martingale. It is an easy fact that there exists a constant $c>0$, such that
  \begin{equation*}
    |x-\log(1+x)| \le c|x|^2, \quad\text{for all } |x|\le 1/2.
  \end{equation*}
  Using Doob's martingale inequality, we have
  \begin{equation*}
    \begin{split}
      \P\left( \sup_{0\le s\le t}|M_s|\ge a, |x|^2 * \mu^M_t <b \right) =&\ \P\left( \sup_{0\le s\le t}|\theta M_s|\ge \theta a, c|x|^2 * \mu^{\theta M}_t <c\theta^2b \right) \\
      \le&\ \P\left( \sup_{0\le s\le t}|\theta M_s|\ge \theta a, |x-\log(1+x)| * \mu^{\theta M}_t <c\theta^2b \right) \\
      \le&\ \P\left( \sup_{0\le s\le t} \mathcal E(\theta M)_{s\wedge T} \ge e^{\theta a - c\theta^2 b}\right) + \P\left( \sup_{0\le s\le t} \mathcal E(-\theta M)_{s\wedge T} \ge e^{\theta a - c\theta^2 b}\right) \\
      \le&\ 2 e^{-\theta a + c\theta^2 b}.
    \end{split}
  \end{equation*}
  By choosing $\theta \le \frac{1}{2A} \wedge \frac{a}{2cb}$, the result follows.
\end{proof}

The following lemma is a small adaption of \cite[Lemma 5.2]{Puh94}. We will omit the proof.
\begin{lemma}\label{estimates}
  Let $\mu$ be an integer-valued random measure and $\nu$ be its compensator. Let $G$ be a predictable random field on $\R_+ \times \R^d$ such that $G*\mu$ is a locally integrable increasing process. Then for every stopping time $T$ and all $a>0$ and $b>0$,
  \begin{align*}
    \P\left( \sup_{0\le s\le T}G*\mu_s >a \right) &\le e^{b-a} + \P\left( \sup_{0\le s\le T} \left(e^G-1\right)*\nu_s >b \right).
  \end{align*}
\end{lemma}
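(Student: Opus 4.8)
The plan is to run the classical exponential Chebyshev argument with a nonnegative exponential supermartingale built from $\mu$ and $\nu$. Since $G*\mu$ is increasing, each jump $\Delta(G*\mu)_s=\int G(s,x)\mu(\{s\},dx)$ is nonnegative, so $G\ge0$ holds $\mu$-a.e., and then also $\nu$-a.e.\ (the process $\ind_{\{G<0\}}*\mu$ vanishes, hence so does its compensator $\ind_{\{G<0\}}*\nu$); we may therefore assume $G\ge0$. Moreover $\sup_{0\le s\le T}G*\mu_s=G*\mu_T$, so the left-hand side equals $\P(G*\mu_T>a)$, and it suffices to bound $\P(\Lambda)$, where $\Lambda:=\{G*\mu_T>a\}\cap\{\sup_{0\le s\le T}(e^G-1)*\nu_s\le b\}$, since the complementary probability $\P\big(\sup_{0\le s\le T}(e^G-1)*\nu_s>b\big)$ is exactly the second term on the right.

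The key object is, after a truncation $G_k:=G\wedge k$ introduced to secure integrability (one has $0\le G_k\uparrow G$ and $(e^{G_k}-1)*\mu\le e^{k}(G*\mu)$, which is locally integrable, so $(e^{G_k}-1)*\mu$ admits the compensator $(e^{G_k}-1)*\nu$), the process
$$\mathcal E^{(k)}_t:=\exp\big(G_k*\mu_t-(e^{G_k}-1)*\nu_t\big).$$
I would identify $\mathcal E^{(k)}$ as the Dol\'eans--Dade stochastic exponential $\mathcal E\big((e^{G_k}-1)*(\mu-\nu)\big)$ of the purely discontinuous local martingale $(e^{G_k}-1)*(\mu-\nu)$, mirroring the computation in the proof of Lemma \ref{pure-disc}; since the jumps $e^{G_k}-1$ of that local martingale exceed $-1$, the exponential $\mathcal E^{(k)}$ is a nonnegative local martingale, in particular a supermartingale with $\E\mathcal E^{(k)}_0=1$. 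Optional stopping at the bounded times $T\wedge t$ then gives $\E[\mathcal E^{(k)}_{T\wedge t}]\le1$, and letting $t\to\infty$ with $\mathcal E^{(k)}\ge0$ and Fatou's lemma yields $\E[\mathcal E^{(k)}_{T}]\le1$.

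On $\Lambda$ we have $G_k*\mu_T\uparrow G*\mu_T>a$ and $(e^{G_k}-1)*\nu_T\le(e^G-1)*\nu_T\le b$, so $\liminf_k\mathcal E^{(k)}_T\ge e^{a-b}\ind_\Lambda$; Fatou applied once more gives $e^{a-b}\P(\Lambda)\le\liminf_k\E[\mathcal E^{(k)}_T]\le1$, i.e.\ $\P(\Lambda)\le e^{b-a}$, and adding back $\P\big(\sup_{0\le s\le T}(e^G-1)*\nu_s>b\big)$ completes the argument.

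The step I expect to be the real obstacle is the nonnegative (super)martingale property of $\mathcal E^{(k)}$. When $\mu$ --- equivalently, the underlying semimartingale --- is quasi-left-continuous there are no fixed times of discontinuity and the stochastic-exponential identity is immediate. In general, at a predictable time $s$ and off the support of $\mu$, the jump of $(e^{G_k}-1)*(\mu-\nu)$ equals $-\int(e^{G_k(s,x)}-1)\,\nu(\{s\},dx)$, which can be $\le-1$ when $\nu(\{s\}\times\R^d)$ carries appreciable mass where $G_k$ is large; the Dol\'eans exponential is then no longer nonnegative, and one must instead work with the variant of the exponential functional that carries the correction term at fixed times, exactly as in \cite[Lemma 5.2]{Puh94}. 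This is precisely why we regard the lemma as only a small adaptation of that result.
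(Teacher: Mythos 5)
Your argument is correct, and it is essentially the proof the paper leaves out: the paper simply defers to \cite[Lemma 5.2]{Puh94}, whose method is exactly your exponential-supermartingale/Chebyshev scheme (the same device as in the proof of Lemma \ref{pure-disc}), and your reduction to $G\ge0$, the truncation $G_k=G\wedge k$, and the two applications of Fatou are all sound. The one step you defer, fixed times of discontinuity, closes in one line: the genuine exponential local martingale attached to $\mu$ differs from your $\mathcal E^{(k)}$ exactly by the factor $\prod_{s\le t}(1+u_s)e^{-u_s}$ with $u_s=\int(e^{G_k(s,x)}-1)\,\nu(\{s\},dx)\ge0$, and since $(1+u)e^{-u}\le1$ for $u\ge0$ this factor is at most $1$, so $\mathcal E^{(k)}$ is dominated by a nonnegative local martingale starting at $1$ and $\E\,\mathcal E^{(k)}_T\le1$ persists without quasi-left-continuity.
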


Now we investigate the uniform exponential tightness for semimartingales. Recall that for each $\e>0$, let $(\Omega^\e,\F^\e,\{\F^\e_t\}_{t\ge0},\P^\e)$ be a filtered probability space with right-continuous filtration. Let $\Pred^\e$ be the collection of simple $d$-dimensional $\{\F^\e_t\}$-adapted \emph{predictable} processes and
\begin{equation*}
  \Pred^\e_1 := \left\{ H\in\Pred^\e: \sup_{t\ge0}|H_t| \le 1 \right\}.
\end{equation*}
\begin{definition}\label{UET-def}
  A family $\{X^\e\}_{\e>0}$ of adapted c\`adl\`ag $d$-dimensional processes is said to be \emph{uniformly exponentially tight} (UET) if for every $t>0$,
  \begin{equation*}
    \lim_{a\to\infty} \limsup_{\e\to0} \sup_{H\in \Pred^\e_1} \e\log\P^\e \left( \sup_{0\le s\le t}\left|\sum_{i=1}^d (H^{i}\cdot X^{\e,i})_s \right| >a \right) = -\infty.
  \end{equation*}
\end{definition}

It is easy to deduce that $\{X^\e\}_{\e>0}$ is UET if and only if for each $i$, $\{X^{\e,i}\}_{\e>0}$ is UET, if and only if $\{X^\e-X^\e_0\}_{\e>0}$ is UET. Besides, the UET property is preserved under additive operation. Due to the fact that if $H$ is an adapted process, then $H_-$ is predictable, the definition of UET given here is equivalent to the UET condition for one-dimensional semimartingales proposed in \cite[Definition 1.1]{Gar08}. In view of these observations, we only focus on \emph{one-dimensional semimartingales starting at $0$} in the rest of this section.


The following lemma is taken from \cite[Lemma 2.5]{Gar08}, which provides some sufficient conditions for the UET of continuous local martingales and processes with locally finite variation.
\begin{lemma}\label{exp-tight-bv-conti}
  (i). Let $\{B^\e\}_{\e>0}$ be a family of one-dimensional c\`adl\`ag processes with locally finite variation. If the family $\{V(B^\e)_t\}_{\e>0}$ is exponentially tight for each $t>0$, then $\{B^\e\}_{\e>0}$ is UET.

  (ii). Let $\{M^\e\}_{\e>0}$ be a family of one-dimensional continuous local martingales. If the family $\{\frac{1}{\e}\langle M^\e,M^\e\rangle_t\}_{\e>0}$ is exponentially tight for each $t>0$, then $\{M^\e\}_{\e>0}$ is UET.
\end{lemma}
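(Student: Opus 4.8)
The plan is to check the UET criterion in Definition 3.5 directly in each case, reducing in both cases to the two exponential estimates already available: the deterministic bound $|(H\cdot B)_t| \le V(B)_t$ for part (i), and Lemma 3.3 (the exponential estimate for purely discontinuous martingales) — or more precisely its continuous analogue via the stochastic exponential — for part (ii).

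For part (i): Fix $t>0$. For any $H\in\Pred^\e_1$, the pathwise estimate $\sup_{0\le s\le t}|(H\cdot B^\e)_s| \le V(B^\e)_t$ holds, since $|H|\le 1$ and the Lebesgue–Stieltjes integral is bounded in absolute value by the total variation. Hence for any $a>0$,
\begin{equation*}
  \sup_{H\in\Pred^\e_1} \e\log\P^\e\left(\sup_{0\le s\le t}|(H\cdot B^\e)_s| > a\right) \le \e\log\P^\e\left(V(B^\e)_t > a\right),
\end{equation*}
and the right-hand side tends to $-\infty$ as first $\e\to0$ then $a\to\infty$, precisely because $\{V(B^\e)_t\}_{\e>0}$ is exponentially tight. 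This gives the UET bound.

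For part (ii): Fix $t>0$ and $H\in\Pred^\e_1$. The process $N^\e := H\cdot M^\e$ is a continuous local martingale with $\langle N^\e,N^\e\rangle_t = (H^2\cdot \langle M^\e,M^\e\rangle)_t \le \langle M^\e,M^\e\rangle_t$ since $|H|\le1$. For a continuous local martingale, $\mathcal E(\theta N^\e/\e) = \exp(\tfrac{\theta}{\e}N^\e - \tfrac{\theta^2}{2\e^2}\langle N^\e,N^\e\rangle)$ is a nonnegative local martingale; stopping at $T^\e := \inf\{s: \langle M^\e,M^\e\rangle_s \ge \e b\}$ makes it a supermartingale started at $1$, so by Doob's maximal inequality, for $a,b>0$,
\begin{equation*}
  \P^\e\left(\sup_{0\le s\le t}|N^\e_s| \ge a,\ \tfrac{1}{\e}\langle M^\e,M^\e\rangle_t < b\right) \le 2\exp\left(-\frac{\theta a}{\e} + \frac{\theta^2 b}{2\e}\right),
\end{equation*}
applying the argument to $\pm N^\e$ as in the proof of Lemma 3.3 and optimizing (or just taking $\theta = a/b$) to get a bound of the form $2\exp(-\frac{a^2}{2\e b})$. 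Splitting on the event $\{\tfrac1\e\langle M^\e,M^\e\rangle_t \ge b\}$ and its complement,
\begin{equation*}
  \P^\e\left(\sup_{0\le s\le t}|N^\e_s|>a\right) \le 2\exp\left(-\frac{a^2}{2\e b}\right) + \P^\e\left(\tfrac1\e\langle M^\e,M^\e\rangle_t \ge b\right).
\end{equation*}
This bound is uniform in $H\in\Pred^\e_1$. Taking $\e\to0$ kills the first term at rate $a^2/(2b)$; then taking $a\to\infty$ sends it to $-\infty$, while the second term, after $\e\to0$ and using exponential tightness of $\{\tfrac1\e\langle M^\e,M^\e\rangle_t\}$, can be made as negative as we like by choosing $b$ large — a standard two-parameter diagonal argument. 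Hence the UET limit in Definition 3.5 is $-\infty$.

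The only mildly delicate point is the order of limits and the interplay of the two free parameters $a$ and $b$ in part (ii): one must choose $b=b(a)$ growing slowly enough that $a^2/b\to\infty$ yet fast enough to push the tail of $\langle M^\e,M^\e\rangle_t/\e$ below any prescribed level; this is routine but should be spelled out. Everything else is the pathwise estimate (part (i)) or a verbatim repeat of the exponential-martingale computation already carried out for Lemma 3.3, now with the continuous quadratic variation playing the role of $|x|^2*\mu^M$, so the proof can legitimately be short or omitted with a reference to \cite[Lemma 2.5]{Gar08}.
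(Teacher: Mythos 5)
Your proof is correct, and it is essentially the standard argument: the paper itself gives no proof of this lemma, simply citing \cite[Lemma 2.5]{Gar08}, and your two steps (the pathwise bound $|(H\cdot B^\e)_s|\le V(B^\e)_t$ for part (i), and the exponential supermartingale $\exp(\lambda N^\e-\tfrac{\lambda^2}{2}\langle N^\e,N^\e\rangle)$ with $\langle N^\e,N^\e\rangle\le\langle M^\e,M^\e\rangle$ for part (ii)) are exactly what that reference, and the paper's own Lemma \ref{pure-disc} in the purely discontinuous case, rely on. One small simplification: the ``delicate'' interplay of $a$ and $b$ you flag is not needed, since after taking $\lim_{a\to\infty}\limsup_{\e\to0}$ your bound reads $\bigl(-\tfrac{a^2}{2b}\bigr)\vee\limsup_{\e\to0}\e\log\P^\e(\tfrac1\e\langle M^\e,M^\e\rangle_t\ge b)$, whose left-hand side is independent of $b$, so letting $b\to\infty$ afterwards finishes the proof without any diagonal choice $b=b(a)$.
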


We can prove a similar statement as Lemma \ref{exp-tight-bv-conti}.(ii) for the purely discontinuous local martingales with uniformly bounded jumps, by utilizing the exponential estimate in Lemma \ref{pure-disc}.

\begin{lemma}\label{exp-tight-pure-disc}
  Let $\{M^\e\}_{\e>0}$ be a family of one-dimensional c\`adl\`ag purely discontinuous local martingales starting at $0$. 
  Let $\nu^\e$ be the L\'evy system of each $M^\e$. Assume for each $t>0$, the family $\{\sup_{0\le s\le t}|\Delta M^\e_s|\}_{\e>0}$ is exponentially tight and for some $r>0$
  \begin{equation}\label{cond-1}
    \lim_{A\to\infty}\lim_{\eta\to\infty}\limsup_{\e\to0}\e\log \P^\e\left( \e e^{|x|^2/(\e r)^2}\ind_{\{|x|\le A\}}* \nu^\e_t \ge \eta \right) = -\infty.
  \end{equation}
  Then the family $\{M^\e\}_{\e>0}$ is UET.
\end{lemma}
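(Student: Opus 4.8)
The plan is to verify Definition~\ref{UET-def} directly. By the reductions recorded after that definition it suffices to take $M^\e$ one-dimensional and to show, for each fixed $t>0$, that $\lim_{a\to\infty}\limsup_{\e\to0}\sup_{H\in\Pred^\e_1}\e\log\P^\e\big(\sup_{0\le s\le t}|(H\cdot M^\e)_s|>a\big)=-\infty$. Following Puhalskii's scheme I would truncate the jumps at a large level $A>0$, writing $M^\e=M^{\e,A}+\tilde M^{\e,A}$ with $M^{\e,A}:=(x\ind_{\{|x|\le A\}})*(\mu^\e-\nu^\e)$ the compensated sum of small jumps (a purely discontinuous local martingale starting at $0$, since $|x|^2\ind_{\{|x|\le A\}}*\nu^\e$ is locally integrable) and $\tilde M^{\e,A}:=M^\e-M^{\e,A}$. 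I will carry out the argument for quasi-left-continuous $M^\e$, which is the case needed for Theorem~\ref{LDP-special}; for such $M^\e$ one has $\Delta M^{\e,A}_s=\Delta M^\e_s\ind_{\{|\Delta M^\e_s|\le A\}}$, so $|\Delta M^{\e,A}|\le A$, and $[\tilde M^{\e,A},\tilde M^{\e,A}]_t=\sum_{s\le t}|\Delta M^\e_s|^2\ind_{\{|\Delta M^\e_s|>A\}}$; the accessible-jump contribution in the general case needs only a routine further splitting.

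Step 1 (the large jumps are exponentially negligible). On $\Omega^\e_A:=\{\sup_{0\le s\le t}|\Delta M^\e_s|\le A\}$ we have $[\tilde M^{\e,A},\tilde M^{\e,A}]_t=0$; since $\big([\tilde M^{\e,A},\tilde M^{\e,A}]_t\big)\ind_{\Omega^\e_A}=0$, a standard localization together with Davis's inequality forces $\tilde M^{\e,A}$, hence $H\cdot\tilde M^{\e,A}$ for any $H\in\Pred^\e_1$, to vanish on $[0,t]$ on $\Omega^\e_A$. Consequently, for every $H\in\Pred^\e_1$,
\[
\P^\e\Big(\sup_{0\le s\le t}|(H\cdot M^\e)_s|>a\Big)\le \P^\e((\Omega^\e_A)^c)+\P^\e\Big(\sup_{0\le s\le t}|(H\cdot M^{\e,A})_s|>a\Big),
\]
and the exponential tightness of $\{\sup_{0\le s\le t}|\Delta M^\e_s|\}_{\e>0}$ gives $\lim_{A\to\infty}\limsup_{\e\to0}\e\log\P^\e((\Omega^\e_A)^c)=-\infty$, uniformly in $H$.

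Step 2 (the truncated part, via Lemmas~\ref{pure-disc} and~\ref{estimates}) and the choice of parameters. Put $N:=H\cdot M^{\e,A}$: a purely discontinuous local martingale with $N_0=0$, $|\Delta N|\le A$ (as $|H|\le1$), and $[N,N]_t\le[M^{\e,A},M^{\e,A}]_t=J^{\e,A}_t:=|x|^2\ind_{\{|x|\le A\}}*\mu^\e_t$. Lemma~\ref{pure-disc}, used in its pre-optimization form $2e^{-\theta a+C\theta^2b}$ (valid for $0<\theta<\tfrac1{2A}$), gives $\P^\e(\sup_{0\le s\le t}|N_s|\ge a)\le 2e^{-\theta a+C\theta^2b}+\P^\e(J^{\e,A}_t\ge b)$ for all $b>0$. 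To the last probability apply Lemma~\ref{estimates} with the predictable field $G:=\tfrac{|x|^2}{(\e r)^2}\ind_{\{|x|\le A\}}$ — so $G*\mu^\e=(\e r)^{-2}J^{\e,A}$ is increasing with jumps bounded by $A^2/(\e r)^2$, hence locally bounded and locally integrable — at threshold $b/(\e r)^2$, using the pointwise bound $e^{G}-1\le e^{|x|^2/(\e r)^2}\ind_{\{|x|\le A\}}$: for all $\eta>0$,
\[
\P^\e\big(J^{\e,A}_t\ge b\big)\le \exp\Big(\tfrac{\eta}{\e}-\tfrac{b}{(\e r)^2}\Big)+\P^\e\Big(\e\,e^{|x|^2/(\e r)^2}\ind_{\{|x|\le A\}}*\nu^\e_t\ge\eta\Big).
\]
Combining with Step 1, for every $H\in\Pred^\e_1$,
\begin{align*}
\P^\e\Big(\sup_{0\le s\le t}|(H\cdot M^\e)_s|>a\Big)\le{}& \P^\e((\Omega^\e_A)^c)+2e^{-\theta a+C\theta^2b}\\
&{}+\exp\Big(\tfrac{\eta}{\e}-\tfrac{b}{(\e r)^2}\Big)+\P^\e\Big(\e\,e^{|x|^2/(\e r)^2}\ind_{\{|x|\le A\}}*\nu^\e_t\ge\eta\Big),
\end{align*}
the bound being uniform over $H\in\Pred^\e_1$. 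Taking $\theta=\tfrac1{4A}$ and $b=\tfrac{2A}{C}a$ turns the second term into $2e^{-a/(8A)}$ and the third into $\exp\big(\tfrac1\e(\eta-\tfrac{2Aa}{Cr^2})\big)$. Given $\alpha>0$, first fix $A$ large and then $\eta$ large — using \eqref{cond-1} and the exponential tightness of $\{\sup_{0\le s\le t}|\Delta M^\e_s|\}$ — so that $\limsup_{\e\to0}\e\log\P^\e((\Omega^\e_A)^c)<-\alpha$ and $\limsup_{\e\to0}\e\log\P^\e\big(\e\,e^{|x|^2/(\e r)^2}\ind_{\{|x|\le A\}}*\nu^\e_t\ge\eta\big)<-\alpha$; then, with $A,\eta$ frozen, pick $a_0$ so that $-a/(8A)<-\alpha$ and $\eta-\tfrac{2Aa}{Cr^2}<-\alpha$ for all $a\ge a_0$. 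Since $\limsup_{\e\to0}\e\log$ of a sum of finitely many nonnegative quantities equals the maximum of their individual $\limsup$'s, one gets $\limsup_{\e\to0}\sup_{H\in\Pred^\e_1}\e\log\P^\e\big(\sup_{0\le s\le t}|(H\cdot M^\e)_s|>a\big)<-\alpha$ for all $a\ge a_0$, and since $\alpha$ and $t$ are arbitrary, $\{M^\e\}_{\e>0}$ is UET.

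The main obstacle is the bookkeeping of the competing scales $A,\eta,a$ (and the auxiliary $b$): $A$ must be taken large both for \eqref{cond-1} and for the truncation, but then it sits in the denominator of the Gaussian-type exponent $a/(8A)$ produced by Lemma~\ref{pure-disc}, while $b$ has to be tuned proportionally to $a/C$ (with the proportionality constant involving $A$) so that the two deterministic exponential terms $e^{-\theta a+C\theta^2b}$ and $e^{(\eta-b/r^2)/\e}$ become simultaneously negligible; it is precisely the exponent $b/(\e r)^2$ against the $\e^{-1}\eta$ coming from Lemma~\ref{estimates} that forces the scaling $|x|/(\e r)$ in hypothesis~\eqref{cond-1}. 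A secondary, more technical, point is the justification of the truncated decomposition and of the vanishing of $\tilde M^{\e,A}$ on $\Omega^\e_A$: this is transparent under quasi-left-continuity but in the general case requires isolating the jumps that occur at accessible times.
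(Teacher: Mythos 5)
Your overall architecture (truncate the jumps at level $A$, dispose of the large-jump remainder via the exponential tightness of $\{\sup_{0\le s\le t}|\Delta M^\e_s|\}$, bound the truncated stochastic integral with Lemma \ref{pure-disc}, and convert the quadratic-variation event into a L\'evy-system event with Lemma \ref{estimates}) is exactly the paper's, but your choice of the quadratic-variation threshold breaks the argument at the decisive point. You split on $\{J^{\e,A}_t<b\}$ with $b$ \emph{independent of} $\e$ and then take $\theta=\tfrac{1}{4A}$, $b=\tfrac{2A}{C}a$, which turns the martingale-inequality term into $2e^{-a/(8A)}$. This is a fixed positive constant, so $\limsup_{\e\to0}\e\log\bigl(2e^{-a/(8A)}\bigr)=0$: your condition $-a/(8A)<-\alpha$ controls $\log$ of that term, but the large-deviation scale is $\e\log$, under which any $\e$-independent constant contributes $0$. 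Hence the maximum of the individual $\limsup$'s in your final display is at least $0$ and the asserted bound $<-\alpha$ fails. The defect cannot be repaired inside your parameter regime: for $\theta\in(0,\tfrac{1}{2A}]$ and $b$ not depending on $\e$, the quantity $2e^{-\theta a+C\theta^2 b}$ is bounded below by a positive constant independent of $\e$, so no choice of $\theta$ and $b$ makes its $\e\log$ tend to anything negative.

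The paper avoids this by thresholding the quadratic variation at $b\e$ rather than at $b$: it splits on $\{|x|^2*\hat\mu^\e_t<b\e\}$, so that Lemma \ref{pure-disc} (taken with its stated conclusion) yields $2\exp\bigl(-\tfrac{a^2}{4Cb\e}\bigr)$, whose $\e\log$ equals $-\tfrac{a^2}{4Cb}$ and tends to $-\infty$ as $a\to\infty$ for each fixed $b$; correspondingly Lemma \ref{estimates} is applied at threshold $\tfrac{b\e}{(\e r)^2}=\tfrac{b}{\e r^2}$ with level $\tfrac{\eta}{\e}$, giving $e^{(\eta-b/r^2)/\e}$, and the limits are taken in the order $a\to\infty$, $b\to\infty$, $\eta\to\infty$, $A\to\infty$. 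You should redo your Step 2 with the event $\{J^{\e,A}_t<b\e\}$. (Your Step 1, which realizes the restriction to $\{\sup_{0\le s\le t}|\Delta M^\e_s|\le A\}$ by an explicit decomposition $M^\e=M^{\e,A}+\tilde M^{\e,A}$, is sound and arguably more careful than the paper's direct use of Lemma \ref{pure-disc} ``on the event''; the restriction to quasi-left-continuous $M^\e$ is a minor incompleteness that you flag yourself. Note also that if you insist on the honest pre-optimization form $2e^{-\theta a+C\theta^2 b}$, then even with the threshold $b\e$ the optimal $\theta=\tfrac{a}{2Cb\e}$ eventually violates $\theta\le\tfrac{1}{2A}$ as $\e\to0$ for fixed $A$; this tension is resolved in the paper only in Corollary \ref{exp-tight-pure-disc-bdd-jumps}, where the jump bound $A^\e=\e r$ shrinks with $\e$.)
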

\begin{proof}
  Denote by $\mu^\e$ the jump measure of each $M^\e$. Let $H\in \Pred^\e_1$. By \cite[Corollary I.4.55.(d) and Eq. (I.4.36)]{JS13}, each $H\cdot M^\e$ is a purely discontinuous local martingale with jumps $|\Delta(H\cdot M^\e)| = |H\Delta M^\e| \le |\Delta M^\e|$. Denote by $\hat\mu^\e$ the jump measure associated to each $H\cdot M^\e$. Using Lemma \ref{pure-disc} and Lemma \ref{estimates}, we have
  \begin{equation}\label{est-17}
    \begin{split}
      &\ \P^\e\left( \sup_{0\le s\le t}|(H\cdot M^\e)_s|\ge a \right) \\
      = &\ \P^\e\left( \sup_{0\le s\le t}|(H\cdot M^\e)_s|\ge a, |x|^2 * \hat\mu^\e_t <b\e, \sup_{0\le s\le t}|\Delta M^\e_s|\le A \right) \\
      &\ + \P^\e\left( \sup_{0\le s\le t}|(H\cdot M^\e)_s|\ge a, |x|^2 * \hat\mu^\e_t \ge b \e, \sup_{0\le s\le t}|\Delta M^\e_s|\le A \right) \\
      &\ + \P^\e\left( \sup_{0\le s\le t}|(H\cdot M^\e)_s|\ge a, \sup_{0\le s\le t}|\Delta M^\e_s|> A \right) \\
      \le &\ 2\exp\left(-\frac{a^2}{4cb\e}\right) + \P^\e\left( \frac{1}{\e}(|Hx|^2 \ind_{\{|x|\le A\}}) * \mu^\e_t \ge b \right) + \P^\e\left( \sup_{0\le s\le t}|\Delta M^\e_s|> A \right) \\
      \le &\ 2\exp\left(-\frac{a^2}{4cb\e}\right) + \P^\e\left( \frac{1}{\e^2 r^2}(|x|^2 \ind_{\{|x|\le A\}}) * \mu^\e_t \ge \frac{b}{\e r^2} \right) + \P^\e\left( \sup_{0\le s\le t}|\Delta M^\e_s|> A \right) \\
      \le &\ 2\exp\left(-\frac{a^2}{4cb\e}\right) + \exp\left( \frac{\eta-b/r^2}{\e} \right) + \P^\e\left( \e e^{|x|^2/(\e r)^2}\ind_{\{|x|\le A\}}* \nu^\e_t \ge \eta \right) + \P^\e\left( \sup_{0\le s\le t}|\Delta M^\e_s|> A \right),
    \end{split}
  \end{equation}
  where $c>0$ is some constant. Therefore,
  \begin{equation*}
    \begin{split}
      &\ \limsup_{\e\to0}\sup_{H\in \Pred^\e_1} \e\log \P^\e\left( \sup_{0\le s\le t}|(H\cdot M^\e)_s|\ge a \right) \\
      \le&\ \left(-\frac{a^2}{4cb}\right) \vee (\eta-b) \vee \limsup_{\e\to0}\e\log \P^\e\left( \e e^{|x|^2/(\e r)^2}\ind_{\{|x|\le A\}}* \nu^\e_t \ge \eta \right) \vee \limsup_{\e\to0}\e\log \P^\e\left( \sup_{0\le s\le t}|\Delta M^\e_s|> A \right).
    \end{split}
  \end{equation*}
  By letting $a\to\infty$, $b\to\infty$, $\eta\to\infty$ and $A\to\infty$ successively, the UET of $\{M^\e\}_{\e>0}$ follows from the exponential tightness of $\{\sup_{0\le s\le t}|\Delta M^\e_s|\}_{\e>0}$ and \eqref{cond-1}.
\end{proof}

\begin{remark}
  We can modify the proof to obtain anther criterion for the UET of $\{M^\e\}_{\e>0}$. Note that $[M^\e,M^\e]_t = \sum_{0\le s\le t}|\Delta M^\e_s|^2 = |x|^2 * \mu^\e_t$ since $M^\e$ is purely discontinuous (see \cite[Lemma I.4.51]{JS13}). From the second inequality sign of \eqref{est-17}, we have
  \begin{equation*}
    \begin{split}
      \P^\e\left( \sup_{0\le s\le t}|(H\cdot M^\e)_s|\ge a \right) \le &\ 2\exp\left(-\frac{a^2}{4cb\e}\right) + \P^\e\left( \frac{1}{\e}|x|^2 * \mu^\e_t \ge b \right) + \P^\e\left( \sup_{0\le s\le t}|\Delta M^\e_s|> A \right) \\
      = &\ 2\exp\left(-\frac{a^2}{4cb\e}\right) + \P^\e\left( \frac{1}{\e}[M^\e,M^\e]_t \ge b \right) + \P^\e\left( \sup_{0\le s\le t}|\Delta M^\e_s|> A \right).
    \end{split}
  \end{equation*}
  Hence, it is easy to conclude that if the families $\{\frac{1}{\e} [M^\e,M^\e]_t\}_{\e>0}$ and $\{\sup_{0\le s\le t}|\Delta M^\e_s|\}_{\e>0}$ are exponentially tight for each $t>0$, then $\{M^\e\}_{\e>0}$ is also UET.
\end{remark}


We will give an equivalent condition for the exponential tightness of $\{\sup_{0\le s\le t}|\Delta M^\e_s|\}_{\e>0}$ in the forthcoming Lemma \ref{exp-tight-large-jumps}. Moreover, we can see from the proof of Lemma \ref{exp-tight-pure-disc} that the assumption of the exponential tightness of $\{\sup_{0\le s\le t}|\Delta M^\e_s|\}_{\e>0}$ can be removed as soon as each $\Delta M^\e$ is bounded process. Hence, we have the following corollary.
\begin{corollary}\label{exp-tight-pure-disc-bdd-jumps}
  Let $\{M^\e\}_{\e>0}$ be a family of one-dimensional c\`adl\`ag purely discontinuous local martingales starting at $0$ satisfying $|\Delta M^\e| \le A^\e$ for all $\e>0$ with some constants $A^\e>0$.
  Let $\nu^\e$ be the L\'evy system of each $M^\e$. Assume for each $t>0$, the family $\{\e e^{|x|^2/\e^2}\ind_{\{|x|\le A^\e\}}* \nu^\e_t\}_{\e>0}$ is exponentially tight. Then the family $\{M^\e\}_{\e>0}$ is UET.
\end{corollary}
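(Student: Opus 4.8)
The statement is a direct specialization of Lemma~\ref{exp-tight-pure-disc}, so the plan is to re-run that proof and note which ingredients become trivial. Three things simplify. First, one takes $r=1$. Second, since the jump measures $\mu^\e$ of the $M^\e$ are now supported in $\{|x|\le A^\e\}$, one takes the truncation level $A=A^\e$, whereupon the indicator $\ind_{\{|x|\le A^\e\}}$ is vacuous on the support of $\mu^\e$ and, more importantly, the ``large-jump'' contribution $\P^\e(\sup_{0\le s\le t}|\Delta M^\e_s|>A)$ is identically $0$, so the hypothesis of exponential tightness of $\{\sup_{0\le s\le t}|\Delta M^\e_s|\}_{\e>0}$ used in Lemma~\ref{exp-tight-pure-disc} is no longer needed. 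Third, condition~\eqref{cond-1} is replaced, without its outer limit $\lim_{A\to\infty}$, by the hypothesized exponential tightness of the real-valued family $\{\e e^{|x|^2/\e^2}\ind_{\{|x|\le A^\e\}}*\nu^\e_t\}_{\e>0}$.

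Concretely, I would fix $t>0$ and $H\in\Pred^\e_1$. As in the proof of Lemma~\ref{exp-tight-pure-disc}, $H\cdot M^\e$ is a purely discontinuous local martingale with jumps bounded by $A^\e$, and writing $\hat\mu^\e$ for its jump measure one has $|x|^2*\hat\mu^\e_t=H^2\cdot[M^\e,M^\e]_t\le[M^\e,M^\e]_t=|x|^2\ind_{\{|x|\le A^\e\}}*\mu^\e_t$, using $\sup|H|\le1$ and that $\mu^\e$ is supported in $\{|x|\le A^\e\}$. Splitting $\{\sup_{0\le s\le t}|(H\cdot M^\e)_s|\ge a\}$ according to whether $|x|^2*\hat\mu^\e_t<b\e$ or not, applying Lemma~\ref{pure-disc} to $H\cdot M^\e$ on the first piece (with its constant $C$, which is absolute) and Lemma~\ref{estimates} with $G=|x|^2/\e^2\,\ind_{\{|x|\le A^\e\}}$ on the second, exactly as in the chain~\eqref{est-17}, I obtain for all $a,b,\eta>0$
\begin{equation*}
  \P^\e\left(\sup_{0\le s\le t}|(H\cdot M^\e)_s|\ge a\right)\le 2\exp\left(-\frac{a^2}{4Cb\e}\right)+\exp\left(\frac{\eta-b}{\e}\right)+\P^\e\left(\e e^{|x|^2/\e^2}\ind_{\{|x|\le A^\e\}}*\nu^\e_t\ge\eta\right),
\end{equation*}
with no extra large-jump term.

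Taking $\sup_{H\in\Pred^\e_1}$, then $\e\log(\cdot)$, then $\limsup_{\e\to0}$, and then sending $a\to\infty$, $b\to\infty$, $\eta\to\infty$ in that order, the contributions of the first two terms tend to $-\infty$, and that of the third does as well by the exponential tightness assumption; this is precisely the UET of $\{M^\e\}_{\e>0}$. The only point that deserves a line of justification --- and the one I expect to be the (mild) ``obstacle'' --- is this last translation: exponential tightness of $\{\e e^{|x|^2/\e^2}\ind_{\{|x|\le A^\e\}}*\nu^\e_t\}_{\e>0}$ means that for every $\alpha<\infty$ there is a compact, hence bounded, $K\subset\R_+$ with $\limsup_{\e\to0}\e\log\P^\e(\e e^{|x|^2/\e^2}\ind_{\{|x|\le A^\e\}}*\nu^\e_t\in K^c)<-\alpha$, so choosing $\eta$ with $K\subset[0,\eta]$ gives $\lim_{\eta\to\infty}\limsup_{\e\to0}\e\log\P^\e(\e e^{|x|^2/\e^2}\ind_{\{|x|\le A^\e\}}*\nu^\e_t\ge\eta)=-\infty$, which is exactly what closes the iterated limit. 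Everything else is verbatim from Lemma~\ref{exp-tight-pure-disc}.
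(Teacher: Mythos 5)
Your proof is correct and is exactly the argument the paper intends: the corollary is stated immediately after the remark that in the proof of Lemma \ref{exp-tight-pure-disc} the exponential tightness of $\{\sup_{0\le s\le t}|\Delta M^\e_s|\}_{\e>0}$ can be dropped once the jumps are bounded, and your rerun of the chain \eqref{est-17} with $A=A^\e$, $r=1$ and the hypothesized exponential tightness replacing \eqref{cond-1} is precisely that adaptation. The closing observation that exponential tightness of a nonnegative real-valued family is equivalent to $\lim_{\eta\to\infty}\limsup_{\e\to0}\e\log\P^\e(\cdot\ge\eta)=-\infty$ is also correct, so nothing is missing.
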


Let $\{X^\e\}_{\e>0}$ be a family of one-dimensional c\`adl\`ag semimartingales with canonical decomposition $X^\e = \check X^\e(h) + B^\e(h) + X^{\e,c} + M^{\e,d}(h)$ and characteristics $(B^\e(h),C^\e,\nu^\e)$ associated to a given truncation function $h$. In Lemma \ref{exp-tight-bv-conti} and Corollary \ref{exp-tight-pure-disc-bdd-jumps}, we have found the sufficient conditions for UET property of the last three terms in the decomposition. It is only left to seek the conditions for the UET property of $\{\check X^\e(h)\}_{\e>0}$ in terms of characteristics. Note that $\check X^\e(h)$ is locally of finite variation. Lemma \ref{exp-tight-bv-conti} is applicable and we need to give some conditions for the exponential tightness of $\{V(\check X^\e(h))_t\}_{\e>0}$ for each $t>0$.


\begin{lemma}\label{exp-tight-large-jumps}
  Let $t>0$ be fixed. With the previous notation, the family $\{V(\check X^\e(h))_t\}_{\e>0}$ is exponentially tight for all truncation function $h$ if and only if the following two properties hold: \\
  (i). The family $\{\sup_{0\le s\le t}|\Delta X^\e_s|\}_{\e>0}$ is exponentially tight. \\
  (ii). For all $r>0$, the family $\{\sum_{0\le s\le t} \ind_{\{|\Delta X^\e_s|>r\}}\}_{\e>0}$ is exponentially tight.
\end{lemma}

\begin{proof}
  Fix $t>0$. Recall that $h_r(x)=x\ind_{\{|x|\le r\}}$ is a truncation function for each $r>0$. Then $\check X^\e(h_r)_t = \sum_{0\le s\le t} \Delta X^\e_s \ind_{\{|\Delta X^\e_s|>r\}}$ and $V(\check X^\e(h_r))_t = \sum_{0\le s\le t} |\Delta X^\e_s| \ind_{\{|\Delta X^\e_s|>r\}}$. Assume first the family $\{V(\check X^\e(h))_t\}_{\e>0}$ is exponentially tight for all $h$. For $a>1$, we have
  \begin{equation*}
    \left\{ \sup_{0\le s\le t}|\Delta X^\e_s| >a \right\} \subset \left\{ V(\check X^\e(h_1))_t >a \right\},
  \end{equation*}
  and (i) follows. For $r>0$, $\sum_{0\le s\le t} \ind_{\{|\Delta X^\e_s|>r\}} \le \frac{1}{r} V(\check X^\e(h_r))_t$ which implies (ii). Now we assume (i) and (ii) to hold. For each fixed truncation function $h$, there exists $r>0$ such that $|h|\ge |h_r|$ and then $V(\check X^\e(h)) \le V(\check X^\e(h_r))$. Hence, it is enough to show that $\{V(\check X^\e(h_r))_t\}_{\e>0}$ is exponentially tight for all $r>0$. For all $b>r$, we have
  \begin{equation*}
    \left\{ \sup_{0\le s\le t}|\Delta X^\e_s| \le b \right\} \subset \left\{ V(\check X^\e(h_r))_t \le b \sum_{0\le s\le t} \ind_{\{|\Delta X^\e_s|>r\}} \right\}.
  \end{equation*}
  Hence, for each $a>0$,
  \begin{equation}\label{est-16}
    \begin{split}
      \P^\e\left( V(\check X^\e(h_r))_t > a \right) & = \P^\e\left( V(\check X^\e(h_r))_t > a, \sup_{0\le s\le t}|\Delta X^\e_s| \le b \right) + \P^\e\left( V(\check X^\e(h_r))_t > a, \sup_{0\le s\le t}|\Delta X^\e_s| > b \right) \\
         & \le \P^\e\left( \sum_{0\le s\le t} \ind_{\{|\Delta X^\e_s|>r\}} > \frac{a}{b} \right) +  \P^\e\left( \sup_{0\le s\le t}|\Delta X^\e_s| > b \right),
    \end{split}
  \end{equation}
  Then
  \begin{equation*}
    \begin{split}
      \limsup_{\e\to0} \e\log \P^\e\left( V(\check X^\e(h_r))_t > a \right) \le&\ \limsup_{\e\to0} \e\log \P^\e\left( \sum_{0\le s\le t} \ind_{\{|\Delta X^\e_s|>r\}} > \frac{a}{b} \right) \\
         &\ \vee \limsup_{\e\to0} \e\log \P^\e\left( \sup_{0\le s\le t}|\Delta X^\e_s| > b \right),
    \end{split}
  \end{equation*}
  from which the exponential tightness of $\{V(\check X^\e(h))_t\}_{\e>0}$ follows by letting first $a\to\infty$ and then $b\to\infty$.
\end{proof}

\begin{remark}
  Intuitively, condition (i) means that the probability of the process $X^\e$ possessing large jumps is exponentially small. Condition (ii) means the probability of the process $X^\e$ possessing large amount of large jumps is exponentially small.
\end{remark}

In general, for a family of semimartingales $\{X^\e\}_{\e>0}$, UET is not implied by exponential tightness, and does not imply exponential tightness either. Here is however a connection between these two notions.
\begin{lemma}\label{UET-exp-tight}
  The exponential tightness of $\{X^\e\}_{\e>0}$ implies condition (i) and (ii) in Lemma \ref{exp-tight-large-jumps}.
\end{lemma}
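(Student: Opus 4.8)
\medskip
\noindent\textbf{Proof plan.}
The plan is to reduce the statement to the two conditions of Lemma~\ref{exp-tight}. By definition, the exponential tightness of $\{X^\e\}_{\e>0}$ means that the induced measures $\mu_\e=\P^\e\circ(X^\e)^{-1}$ form an exponentially tight family on the Skorokhod space $\D$; Lemma~\ref{exp-tight} then provides, for every $T>0$,
\begin{equation*}
  \lim_{a\to\infty}\limsup_{\e\to0}\e\log\P^\e\Big(\sup_{0\le u\le T}|X^\e_u|\ge a\Big)=-\infty,
\end{equation*}
and, for every $T>0$ and $\eta>0$,
\begin{equation*}
  \lim_{\rho\to0}\limsup_{\e\to0}\e\log\P^\e\big(w_T(X^\e,\rho)\ge\eta\big)=-\infty .
\end{equation*}
Both parts of Lemma~\ref{exp-tight-large-jumps} should follow from these two limits by elementary inclusions of events, so no genuinely new estimate is required.

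For part (i) I would use that a jump $|\Delta X^\e_s|\ge a$ at a time $s\in(0,t]$ forces $|X^\e_s|\ge a/2$ or $|X^\e_{s-}|\ge a/2$, and hence $\sup_{0\le u\le t}|X^\e_u|\ge a/2$ (recall $\Delta X^\e_0=0$, so any jump time lies in $(0,t]$). This gives the inclusion $\{\sup_{0\le s\le t}|\Delta X^\e_s|\ge a\}\subseteq\{\sup_{0\le u\le t}|X^\e_u|\ge a/2\}$, and the first of the two limits above, applied with $T=t$, immediately yields the exponential tightness of $\{\sup_{0\le s\le t}|\Delta X^\e_s|\}_{\e>0}$.

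For part (ii) the key ingredient is a deterministic fact about c\`adl\`ag paths: if $x\in\D$ satisfies $w_t(x,\rho)<r$, then $x$ has at most $\lfloor t/\rho\rfloor+1$ jumps of size exceeding $r$ on $[0,t]$. To prove this, fix a partition $\{t_i\}_{i=0}^k\in\Delta_\rho[0,t]$ realizing $\max_{1\le i\le k}w(x,[t_{i-1},t_i))<r$; a jump time $s$ with $|\Delta x(s)|>r$ cannot lie in the interior of any subinterval $[t_{i-1},t_i)$, since letting $u\uparrow s$ inside that subinterval would give $w(x,[t_{i-1},t_i))\ge|\Delta x(s)|>r$, a contradiction; therefore every such jump sits at one of the points $t_1,\dots,t_k$, whose number is bounded by $t/\rho+1$ because of the separation $t_j-t_{j-1}>\rho$ for $1\le j\le k-1$. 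Taking $\rho_n:=2t/(n-1)\downarrow 0$, this yields, for each $r>0$ and $n\ge 2$,
\begin{equation*}
  \Big\{\sum_{0\le s\le t}\ind_{\{|\Delta X^\e_s|>r\}}\ge n\Big\}\subseteq\big\{w_t(X^\e,\rho_n)\ge r\big\} .
\end{equation*}
Applying $\limsup_{\e\to0}\e\log(\cdot)$, then letting $n\to\infty$ and invoking the second of the two limits above (with $T=t$, $\eta=r$) along the null sequence $\rho_n$, we obtain $\lim_{n\to\infty}\limsup_{\e\to0}\e\log\P^\e(\sum_{0\le s\le t}\ind_{\{|\Delta X^\e_s|>r\}}\ge n)=-\infty$, which (the counting variables being integer-valued) is precisely the exponential tightness of $\{\sum_{0\le s\le t}\ind_{\{|\Delta X^\e_s|>r\}}\}_{\e>0}$.

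I expect the combinatorial fact in part (ii) to be the only delicate point: one has to keep track of the possibility that a jump falls exactly on a partition point, in which case it is split between two adjacent subintervals and forces no large oscillation, so that the number of $r$-jumps is controlled solely through the number of admissible partition points allowed by the $\rho$-separation. Everything else is routine bookkeeping with $\limsup_{\e\to0}\e\log(\cdot)$.
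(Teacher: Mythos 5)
Your proof is correct and follows essentially the same route as the paper: the paper likewise invokes Lemma \ref{exp-tight} together with the two event inclusions $\{\sup_{0\le s\le t}|\Delta X^\e_s|\ge a\}\subset\{\sup_{0\le s\le t}|X^\e_s|\ge a/2\}$ and $\{\sum_{0\le s\le t}\ind_{\{|\Delta X^\e_s|>r\}}\ge a\}\subset\{w_t(X^\e,t/a)\ge r\}$. Your write-up merely makes explicit (and slightly sharpens, via the choice $\rho_n=2t/(n-1)$) the deterministic counting argument behind the second inclusion, which the paper leaves implicit.
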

\begin{proof}
  The result follows from Lemma \ref{exp-tight} and the following observations
  \begin{align*}
    \left\{\sup_{0\le s\le t}|\Delta X^\e_s| \ge a\right\} &\subset \left\{\sup_{0\le s\le t}|X^\e_s| \ge a/2 \right\}, \\
    \left\{ \sum_{0\le s\le t} \ind_{\{|\Delta X^\e_s|>r\}} \ge a \right\} &\subset \left\{ w_T(X^\e,T/a) \ge r \right\}.
  \end{align*}
\end{proof}

To obtain more appropriate conditions, we take the truncation function $h$ to be the $\e$-dependent function $h_{\e r}(x)=x\ind_{\{|x|\le \e r\}}$ with some $r>0$, and consider the decomposition $X^\e = \check X^\e(h_{\e r}) + B^\e(h_{\e r}) + X^{\e,c} + M^{\e,d}(h_{\e r})$.

\begin{lemma}\label{exp-tight-large-jumps-2}
  If for each $t>0$, if condition (i) in Lemma \ref{exp-tight-large-jumps} and the following condition hold: \\
  (iii). $\lim_{a\to\infty} \lim_{b\to\infty} \limsup_{\e\to0} \e\log \P^\e ( \sum_{0\le s\le t} |\Delta X^\e_s|\ind_{\{\e r<|\Delta X^\e_s|\le b\}} > a ) = -\infty$. \\
  Then the family $\{V(\check X^\e(h_{\e r}))_t\}_{\e>0}$ is exponentially tight.
\end{lemma}

\begin{proof}
  The result follows by modifying the inequality \eqref{est-16} to
  \begin{equation*}
    \P^\e\left( V(\check X^\e(h_{\e r}))_t > a \right) \le \P^\e\left( \sum_{0\le s\le t} |\Delta X^\e_s|\ind_{\{\e r<|\Delta X^\e_s|\le b\}} > a \right) +  \P^\e\left( \sup_{0\le s\le t}|\Delta X^\e_s| > b \right).
  \end{equation*}
\end{proof}


The following lemma provides some conditions for the UET property of $\{B^\e(h_{\e r})\}_{\e>0}$.
\begin{lemma}\label{UET-B}
  If for each $t>0$, the families $\{|x|\ind_{\{\e r<|x|\le b_0\}}*\nu^\e_t\}_{\e>0}$ and $\{V(B^\e(h_{b_0}))_t\}_{\e>0}$ are exponentially tight for some $b_0>0$. Then the family $\{B^\e(h_{\e r})\}_{\e>0}$ is UET.
\end{lemma}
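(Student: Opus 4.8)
The plan is to reduce everything to Lemma~\ref{exp-tight-bv-conti}.(i) by comparing the two truncation functions $h_{\e r}$ and $h_{b_0}$. The starting point is the affine dependence of the first characteristic on the truncation function: for any two truncation functions $h,h'$ one has $B^\e(h') - B^\e(h) = (h'-h)(x)*\nu^\e$ (see, e.g., \cite[Proposition II.2.24]{JS13}; this is just the identity $\check X^\e(h') - \check X^\e(h) = (h-h')(x)*\mu^\e$, together with the fact that the latter is a pure-jump process of locally finite variation whose compensator is $(h-h')(x)*\nu^\e$). Since the definition of UET only involves $\limsup_{\e\to0}$, we may restrict attention to $\e$ with $\e r < b_0$, and for such $\e$ we have $(h_{\e r}-h_{b_0})(x) = -x\ind_{\{\e r<|x|\le b_0\}}$, whence
\[
  B^\e(h_{\e r}) = B^\e(h_{b_0}) - \bigl(x\ind_{\{\e r<|x|\le b_0\}}\bigr)*\nu^\e .
\]

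Next, since UET is stable under addition (the remark following Definition~\ref{UET-def}), it suffices to show that each of the two processes on the right is UET. Both are predictable and of locally finite variation, so by Lemma~\ref{exp-tight-bv-conti}.(i) it is enough to check that, for each $t>0$, the family of their variation processes at time $t$ is exponentially tight. For $B^\e(h_{b_0})$ this is exactly the second hypothesis. For the correction term, since $\nu^\e$ is a nonnegative random measure,
\[
  V\bigl((x\ind_{\{\e r<|x|\le b_0\}})*\nu^\e\bigr)_t \le |x|\ind_{\{\e r<|x|\le b_0\}} * \nu^\e_t ,
\]
and a family of $[0,\infty)$-valued random variables dominated by an exponentially tight family is itself exponentially tight (from $\{Z^\e>a\}\subset\{W^\e>a\}$); the dominating family is exponentially tight by the first hypothesis. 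This proves the lemma. Alternatively one can skip the additivity step and argue in one shot: $V(B^\e(h_{\e r}))_t \le V(B^\e(h_{b_0}))_t + |x|\ind_{\{\e r<|x|\le b_0\}}*\nu^\e_t$, a sum of two exponentially tight families of real random variables is exponentially tight since $\{Z^\e+W^\e>2a\}\subset\{Z^\e>a\}\cup\{W^\e>a\}$, and then Lemma~\ref{exp-tight-bv-conti}.(i) applies directly to $\{B^\e(h_{\e r})\}_{\e>0}$.

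The only genuinely technical point is the change-of-truncation identity and the attendant bookkeeping: $(x\ind_{\{\e r<|x|\le b_0\}})*\mu^\e$ has only finitely many jumps on compact intervals (jumps of size $>\e r$ are locally finite) and those jumps are bounded by $b_0$, hence it is a special semimartingale, so its compensator $(x\ind_{\{\e r<|x|\le b_0\}})*\nu^\e$ exists and is predictable of locally finite variation. Everything else consists of the two elementary stability properties of exponential tightness used above (closed under finite sums, monotone under domination), which I would record in passing rather than isolate as separate statements.
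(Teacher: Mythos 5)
Your proposal is correct and follows essentially the same route as the paper: the change-of-truncation identity $B^\e(h_{\e r}) = B^\e(h_{b_0}) + (h_{\e r}-h_{b_0})*\nu^\e$, the variation bound $V(B^\e(h_{\e r}))_t \le V(B^\e(h_{b_0}))_t + |x|\ind_{\{\e r<|x|\le b_0\}}*\nu^\e_t$, and then Lemma~\ref{exp-tight-bv-conti}.(i). The only cosmetic difference is that the paper justifies the bound on $V((h_{\e r}-h_{b_0})*\nu^\e)_t$ via the disintegration $\nu^\e(dx,dt)=K^\e_t(dx)\,dR^\e_t$ from \cite[Proposition II.2.9]{JS13}, whereas you invoke nonnegativity of $\nu^\e$ directly; both yield the same estimate.
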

\begin{proof}
  Note that $B^\e(h_{\e r}) = B^\e(h_{b_0}) + (h_{\e r}-h_{b_0})*\nu^\e$. We have $V(B^\e(h_{\e r}))\le V(B^\e(h_{b_0})) + V((h_{\e r}-h_{b_0})*\nu^\e)$. By \cite[Proposition II.2.9]{JS13}, there exists a predictable locally integrable increasing process $R^\e$ and a predictable transition kernel $K^\e_t(dx)$ from $\Omega\times\R_+$ into $\R^d$, such that $\nu^\e(dx,dt) = K^\e_t(dx)dR^\e_t$. Hence
  \begin{equation*}
    \begin{split}
      &\ V((h_{\e r}-h_{b_0})*\nu^\e) = \int_0^t \left| \int_{\R^d} (h_{\e r}(x)-h_{b_0}(x)) K^\e_s(dx) \right| dV(R^\e)_s \\
      \le&\ \int_0^t \int_{\e r<|x|\le b_0} |x| K^\e_s(dx) dV(R^\e)_s = |x|\ind_{\{\e r<|x|\le b_0\}}*\nu^\e_t.
    \end{split}
  \end{equation*}
  The result follows from Lemma \ref{exp-tight-bv-conti}.(i).
\end{proof}

\begin{remark}
  In fact, the proof of previous lemma yields a general conclusion. If for each $t>0$, the families $\{|x|*\nu^\e_t\}_{\e>0}$ and $\{V(B^\e(h_{b_0}))_t\}_{\e>0}$ are exponentially tight for some $b_0>0$. Then the family $\{B^\e(h_b)\}_{\e>0}$ is UET for all $b>0$.
\end{remark}

We are in position to give the sufficient conditions for the UET property of $\{X^\e\}_{\e>0}$.
\begin{proposition}[Criterion for UET property]\label{exp-tight-general}
  Let $\{X^\e\}_{\e>0}$ be a family of one-dimensional c\`adl\`ag semimartingales with characteristics $(B^\e(h),C^\e,\nu^\e)$ associated to a truncation function $h$. Suppose (i) in Lemma \ref{exp-tight-large-jumps} holds. Assume there exist $b_0>0$ and $r>0$ such that for each $t>0$, the families $\{V(B^\e(h_{b_0}))_t\}_{\e>0}$ and $\{\frac{1}{\e} C^\e_t\}_{\e>0}$ are exponentially tight, and the following holds,
  \begin{equation}\label{cond-2}
    \lim_{\eta\to\infty} \lim_{b\to\infty} \limsup_{\e\to0} \e\log \P^\e \left( \e\exp\left(\frac{|x|}{\e r}\vee 1 \right)\ind_{\{0<|x|\le b\}} * \nu^\e_t >\eta \right) = -\infty.
  \end{equation}
  Then $\{X^\e\}_{\e>0}$ is UET.
\end{proposition}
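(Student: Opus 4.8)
The plan is to split each $X^\e$ along the $\e$-dependent truncation function $h_{\e r}(x)=x\ind_{\{|x|\le\e r\}}$, using its canonical decomposition associated to $h_{\e r}$,
\[
  X^\e = \check X^\e(h_{\e r}) + B^\e(h_{\e r}) + X^{\e,c} + M^{\e,d}(h_{\e r}),
\]
and to show that each of the four summands is UET; since the UET property is stable under addition, this gives the claim. One elementary fact will be used repeatedly: because the functional $\e\exp(\frac{|x|}{\e r}\vee1)\ind_{\{0<|x|\le b\}}*\nu^\e_t$ is non-decreasing in $b$, condition \eqref{cond-2} already forces, for \emph{every} fixed $b>0$ and $t>0$, the family $\{\e\exp(\frac{|x|}{\e r}\vee1)\ind_{\{0<|x|\le b\}}*\nu^\e_t\}_{\e>0}$ to be exponentially tight --- indeed $\limsup_{\e\to0}\e\log\P^\e(\cdots>\eta)$ for the $b$-truncated functional is non-decreasing in $b$, hence dominated by the $b\to\infty$ limit in \eqref{cond-2}, which tends to $-\infty$ as $\eta\to\infty$.

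For the large-jump term $\check X^\e(h_{\e r})$, which is locally of finite variation with $V(\check X^\e(h_{\e r}))_t=\sum_{0\le s\le t}|\Delta X^\e_s|\ind_{\{|\Delta X^\e_s|>\e r\}}$, I apply Lemma \ref{exp-tight-large-jumps-2}: condition (i) is among the hypotheses, and since $|x|/(\e r)>1$ on $\{\e r<|x|\le b\}$ the functional in (iii') is pointwise dominated by the one in \eqref{cond-2}, so \eqref{cond-2}$\Rightarrow$(iii')$\Rightarrow$(iii); then Lemma \ref{exp-tight-bv-conti}.(i) yields that $\{\check X^\e(h_{\e r})\}_{\e>0}$ is UET. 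For the drift term $B^\e(h_{\e r})$, I invoke Lemma \ref{UET-B} with the $b_0$ of the hypothesis: $\{V(B^\e(h_{b_0}))_t\}_{\e>0}$ is assumed exponentially tight, and from $u\le e^u$ one gets $|x|\ind_{\{\e r<|x|\le b_0\}}\le\e r\,e^{|x|/(\e r)\vee1}\ind_{\{0<|x|\le b_0\}}$, so $\{|x|\ind_{\{\e r<|x|\le b_0\}}*\nu^\e_t\}_{\e>0}$ is exponentially tight by the elementary fact above; hence $\{B^\e(h_{\e r})\}_{\e>0}$ is UET. For the continuous martingale part, $\langle X^{\e,c},X^{\e,c}\rangle=C^\e$, so exponential tightness of $\{\frac1\e C^\e_t\}_{\e>0}$ together with Lemma \ref{exp-tight-bv-conti}.(ii) gives that $\{X^{\e,c}\}_{\e>0}$ is UET.

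The remaining term $M^{\e,d}(h_{\e r})$ is the delicate one. It is a purely discontinuous local martingale with $|\Delta M^{\e,d}(h_{\e r})|\le 2\e r$ (since $|h_{\e r}(\Delta X^\e)|\le\e r$ and the predictable compensating jump $\int h_{\e r}(x)\,\nu^\e(\{s\}\times\R^d)$ is also bounded by $\e r$, because $\nu^\e(\{s\}\times\R^d)\le1$). By Remark \ref{exp-tight-pure-disc-equiv} it suffices to prove that $\{\frac1\e[M^{\e,d}(h_{\e r}),M^{\e,d}(h_{\e r})]_t\}_{\e>0}$ is exponentially tight (the jump family being trivially so, bounded by $2\e r$). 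I would use that $[M^{\e,d}(h_{\e r}),M^{\e,d}(h_{\e r})]_t=\sum_{0\le s\le t}|\Delta M^{\e,d}(h_{\e r})_s|^2$ is bounded by $2|x|^2\ind_{\{|x|\le\e r\}}*\mu^{X^\e}_t+2(\e r)^2\nu^\e(\{0<|x|\le\e r\}\times[0,t])$. Here $\frac1\e$ times the last term is a constant multiple of $\e\exp(\frac{|x|}{\e r}\vee1)\ind_{\{0<|x|\le\e r\}}*\nu^\e_t$ (using $\exp(\frac{|x|}{\e r}\vee1)=e$ on $\{|x|\le\e r\}$) and is controlled by the elementary fact above; for the first term $\frac1\e|x|^2\ind_{\{|x|\le\e r\}}*\mu^{X^\e}_t$, I apply Lemma \ref{estimates} to $\mu^{X^\e}$ with the deterministic field $G=\frac{\gamma}{\e^2}|x|^2\ind_{\{|x|\le\e r\}}$ ($\gamma>0$ a constant), which is bounded by $\gamma r^2$ on its support so that $e^G-1\le(e^{\gamma r^2}-1)\ind_{\{0<|x|\le\e r\}}$; taking the auxiliary constant in Lemma \ref{estimates} of order $1/\e$ makes its deterministic term decay like $e^{-c/\e}$ for large thresholds, while its $\nu^\e$-term is once more of the form the elementary fact covers. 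Sending the parameters ($\gamma$, the $\nu^\e$-threshold, and finally the tightness threshold) to their limits in the order dictated by the nested limits of \eqref{cond-2} gives exponential tightness of $\{\frac1\e|x|^2\ind_{\{|x|\le\e r\}}*\mu^{X^\e}_t\}_{\e>0}$, hence of $\{\frac1\e[M^{\e,d}(h_{\e r}),M^{\e,d}(h_{\e r})]_t\}_{\e>0}$; Remark \ref{exp-tight-pure-disc-equiv} then yields that $\{M^{\e,d}(h_{\e r})\}_{\e>0}$ is UET, and adding the four pieces finishes the proof.

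The main obstacle is precisely this last step. The sharp bracket of $M^{\e,d}(h_{\e r})$ is built from the \emph{uncompensated} jump measure $\mu^{X^\e}$, whereas \eqref{cond-2} only controls the compensator $\nu^\e$; one therefore has to pass from $\mu^{X^\e}$ to $\nu^\e$ through the exponential estimate of Lemma \ref{estimates}, tuning the $\e$-scaling of the auxiliary field $G$ to the factor $1/\e$ so that the deterministic remainder of Lemma \ref{estimates} decays exponentially in $1/\e$ while the surviving probability is exactly of the shape appearing in \eqref{cond-2}. Keeping the successive limits in an order compatible with the nested-limit structure of \eqref{cond-2} is the only genuinely delicate bookkeeping.
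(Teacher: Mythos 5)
Your proof is correct, and for three of the four pieces it is literally the paper's argument: the same decomposition $X^\e = \check X^\e(h_{\e r}) + B^\e(h_{\e r}) + X^{\e,c} + M^{\e,d}(h_{\e r})$, with Lemma \ref{exp-tight-large-jumps-2} for $\check X^\e(h_{\e r})$ (via the same domination of (iii') by \eqref{cond-2}), Lemma \ref{UET-B} for $B^\e(h_{\e r})$, and Lemma \ref{exp-tight-bv-conti}.(ii) for $X^{\e,c}$. The only genuine divergence is the term you call delicate, $M^{\e,d}(h_{\e r})$. The paper disposes of it in one line by invoking Corollary \ref{exp-tight-pure-disc-bdd-jumps} with $A^\e=\e r$: on $\{|x|\le A^\e\}$ one has $e^{|x|^2/\e^2}\le e^{r^2}$, so the corollary's hypothesis reduces to exponential tightness of $\{\e\,\nu^\e(\{0<|x|\le \e r\}\times[0,t])\}_{\e>0}$, which follows from \eqref{cond-2} exactly as in your ``elementary fact.'' You instead go through Remark \ref{exp-tight-pure-disc-equiv}, which forces you to control $\frac1\e[M^{\e,d}(h_{\e r}),M^{\e,d}(h_{\e r})]_t$ --- a functional of the \emph{uncompensated} measure $\mu^{X^\e}$ --- and hence to re-run the Lemma \ref{estimates} transfer from $\mu$ to $\nu$ with an $\e$-tuned field $G$. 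That bookkeeping is sound (the nested limits can be taken in the order you describe, and $|x|^2\ind_{\{|x|\le\e r\}}*\mu^{X^\e}$ is locally integrable so Lemma \ref{estimates} applies), but it duplicates work the paper has already packaged into Corollary \ref{exp-tight-pure-disc-bdd-jumps}, whose proof (via Lemma \ref{pure-disc}) conditions on the event $\{|x|^2*\hat\mu^\e_t<b\e\}$ precisely so that no separate tightness of the $\mu$-functional is ever needed. In short: same skeleton, and on the last limb your route is longer but not wrong; the direct corollary buys you the $\mu$-to-$\nu$ passage for free.
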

\begin{proof}
  We use the decomposition $X^\e = \check X^\e(h_{\e r}) + B^\e(h_{\e r}) + X^{\e,c} + M^{\e,d}(h_{\e r})$. The UET property of $\{X^{\e,c}\}$ and $\{\check X^\e(h_{\e r})\}$ follow from Lemma \ref{exp-tight-bv-conti}.(ii) and Lemma \ref{exp-tight-large-jumps-2}. The UET property of $\{B^\e(h_{\e r})\}$ follows from Lemma \ref{UET-B}, since the exponential tightness of $\{|x|\ind_{\{\e r<|x|\le b\}}*\nu^\e_t\}_{\e>0}$ is a consequence of \eqref{cond-2} and the observation $|x|\ind_{\{\e r<|x|\le b_0\}} \le \e\exp(|x|/(\e r))\ind_{\{\e r<|x|\le b\}}$ for all $b\ge b_0$. Finally, to check the UET property of $\{M^{\e,d}(h_{\e r})\}$, we use Corollary \ref{exp-tight-pure-disc-bdd-jumps} by taking $A^\e = \e r$. The proof is complete.
\end{proof}

\section{Proofs of main results}\label{sec-4}

For each $\e>0$, we have a filtered probability space $(\Omega^\e,\F^\e,\{\F^\e_t\}_{t\ge0},\P^\e)$ endowed with a $d$-dimensional semimartingale $X^\e$ and an $n$-dimensional c\`adl\`ag adapted process $U^\e$. We also have a bounded Lipschitz function $F: \R^n \to \R^{n\times d}$, so that each stochastic differential equation
\begin{equation*}
  Y^\e = U^\e + F(Y^\e_-) \cdot X^\e,
\end{equation*}
has a unique global solution $Y^\e$ which is an $n$-dimensional process. Suppose the function $F$ and its Lipschitz constant are both bounded by $c>0$, that is, $|F(y_1) - F(y_2)| \le c|y_1-y_2|$ and $|F(y)|\le c$ for all $y_1,y_2,y$. 

The following Gronwall-type inequality is adapted from \cite[Lemma IX.6.3]{JS13}. The proof is almost the same and shall be omitted.
\begin{lemma}\label{Gronwall}
  Let $A$ be a nondecreasing c\`adl\`ag process, $H$ be a nonnegative adapted process, such that $\E( (H_-\cdot A)_\infty ) < \infty$ and $A_\infty \le K$ identically for some constant $K$.
  Suppose that for each stopping time $T$ we have
  \begin{equation*}
    \E(H_{T-}) \le \alpha + \E( (H_- \cdot A)_{T-} ),
  \end{equation*}
  for some constant $\alpha$. Then $\E(H_\infty) \le \alpha e^{Kt}$.
\end{lemma}

\begin{lemma}\label{est-8}
  For each $\e>0$, let $X^\e$ be a special semimartingale with canonical decomposition $X^\e = B^\e + M^\e$, let $F^\e$ be an $\{\F^\e_t\}_{t\ge0}$-adapted processes and $U^\e$ be a c\`adl\`ag processes. Suppose $|\Delta X^\e| \le A$ and $|U^\e| \le z_0$ for all $\e>0$, with some constants $A>0$ and $z_0>0$. Let $T^\e$ be an $\{\F^\e_t\}$-stopping time. Let $Z^\e = U^\e + F^\e_- \cdot X^\e$. Suppose there exist positive constants $c, K$ and $\rho$, such that for every $t\in [0,T^\e)$ and all $\e>0$,
  \begin{equation}\label{bound}
    |F^\e_t| \le c\left((\rho^2 + |Z^\e_t|^2)^{1/2} \wedge 1 \right),
  \end{equation}
  and
  \begin{equation}\label{G}
    \Xi(X^\e)_t := |V(B^\e)|_t + \frac{1}{\e}|\langle M^{\e,c}, M^{\e,c} \rangle|_t + \frac{1}{\e} \int_{\Ro d} e^{2 c|x|/\e} |x|^2 \nu^\e( dx,[0,t]) \le K.
  \end{equation}
  Then for all $a > 3z_0 +c A$ and $0<\e \le 1$,
  \begin{equation*}
    \e \log \P^\e\left( \sup_{t\in[0,T^\e]} |Z^\e_t|\ge a \right) \le c_1+ \log\left( \frac{\rho^2}{\rho^2+(a - 3z_0 - c A)^2}\right),
  \end{equation*}
  where $c_1 = (2c+4c^2)K$.
\end{lemma}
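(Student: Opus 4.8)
The goal is an exponential estimate on $\P^\e(\sup_{t\le T^\e}|Z^\e_t|\ge a)$ of the form $\exp\!\big(C_1/\e\big)\cdot\big(\rho^2/(\rho^2+(a-3z_0-CA)^2)\big)^{1/\e}$, so the natural vehicle is an exponential-supermartingale / Chernoff argument applied to the function $\phi(z)=\log(\rho^2+|z|^2)$ composed with $Z^\e$. The plan is to apply It\^o's formula for general semimartingales to $\phi(Z^\e_t)$, identify the drift terms coming from $V(B^\e)$, from the continuous martingale bracket $\langle M^{\e,c},M^{\e,c}\rangle$, and from the jumps (where the compensator $\nu^\e$ enters through the term $\phi(z+\Delta)-\phi(z)-\nabla\phi(z)\cdot\Delta$), and then to bound those drifts using hypotheses \eqref{bound} and \eqref{G}. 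Concretely, since $\Delta Z^\e=F^\e_-\,\Delta X^\e$ with $|F^\e|\le C$ and $|\Delta X^\e|\le A$, and $|\nabla\phi(z)|=2|z|/(\rho^2+|z|^2)\le 1/\rho\wedge$ something bounded, while $\|\mathrm{Hess}\,\phi\|\le 2/(\rho^2+|z|^2)$, the continuous-martingale contribution to the drift is $\le \tfrac12\|\mathrm{Hess}\,\phi\|\,|F^\e_-|^2\,d\langle M^{\e,c}\rangle$, which by \eqref{bound} ($|F^\e_t|^2\le C^2(\rho^2+|Z^\e_t|^2)$) is bounded by $C^2\,d\langle M^{\e,c}\rangle$; the finite-variation contribution $\nabla\phi(Z^\e)\cdot F^\e_-\,dB^\e$ is bounded by $(2|Z^\e|/(\rho^2+|Z^\e|^2))\cdot C(\rho^2+|Z^\e|^2)^{1/2}\,dV(B^\e)\le 2C\,dV(B^\e)$; and for the jump term one uses the elementary inequality $\log(\rho^2+|z+w|^2)-\log(\rho^2+|z|^2)-\langle\nabla\phi(z),w\rangle\le \dfrac{|w|^2}{\rho^2+|z|^2}\,e^{2|w|/\sqrt{\rho^2+|z|^2}}$ (or a cruder variant), so that with $w=F^\e_-x$, $|w|\le C|x|$, this is $\le \dfrac{C^2|x|^2}{\rho^2+|Z^\e|^2}e^{2C|x|/\rho}$ times $\nu^\e$, and after multiplying by the weight $e^{2C|x|/\e}$ built into \eqref{G} (note $\e\le1$ so $e^{2C|x|/\rho}\le e^{2C|x|/\e}$ when $\rho\ge\e$; one arranges constants so this works, possibly absorbing a further $C^2$ factor). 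Summing, $\phi(Z^\e_t)-\phi(Z^\e_0)$ is dominated by a local martingale plus $(2C+\text{const}\cdot C^2)\,\Xi(X^\e)_t\le (2C+4C^2)K=C_1$ uniformly on $[0,T^\e)$.

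The rigorous way to turn this into a tail bound is to scale by $1/\e$: set $N^\e_t:=\exp\!\big(\tfrac1\e[\phi(Z^\e_t)-\phi(Z^\e_0)-A^\e_t]\big)$ where $A^\e_t$ is the predictable drift just estimated, and show $N^\e$ is a nonnegative local supermartingale (hence a supermartingale, since $N^\e_0=1$), by a stochastic-exponential computation analogous to the one in the proof of Lemma \ref{pure-disc}: the point of putting the exponential weight $e^{2C|x|/\e}$ into \eqref{G} is precisely that it dominates the $x$-multiplier appearing when one exponentiates the jump part of $\phi(Z^\e)/\e$. Because $A^\e_{T^\e}\le C_1$, stopping at $T^\e$ and at the first time $Z^\e$ exits the ball of radius $a$ gives, on the event $\{\sup_{t\le T^\e}|Z^\e_t|\ge a\}$, that $\phi(Z^\e)$ has reached $\log(\rho^2+a^2)$ while starting from $\phi(Z^\e_0)\le\log(\rho^2+|U^\e_0|^2)\le\log(\rho^2+z_0^2)$. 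I should account for the shift $3z_0+CA$: the $z_0$'s arise because $Z^\e_0=U^\e_0$ has $|Z^\e_0|\le z_0$, plus handling $|U^\e_t|\le z_0$ if one prefers to work with $Z^\e-U^\e$, plus one jump of size $\le CA$ possibly overshooting the boundary; that is where the somewhat generous constant $3z_0+CA$ comes from. Chebyshev on the supermartingale $N^\e$ then yields
\[
\P^\e\!\Big(\sup_{t\le T^\e}|Z^\e_t|\ge a\Big)\le \exp\!\Big(\tfrac{C_1}{\e}\Big)\Big(\tfrac{\rho^2+z_0^2}{\rho^2+(a-3z_0-CA)^2}\Big)^{1/\e},
\]
and taking $\e\log$ and absorbing the $\log(\rho^2+z_0^2)$ correction into $C_1$ (or noting $\rho^2+z_0^2$ can be compared to a constant multiple of $\rho^2$) gives the stated bound.

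The main obstacle I anticipate is the jump/It\^o-for-semimartingales bookkeeping: making sure the elementary inequality relating $\log(\rho^2+|z+w|^2)$ to its first-order Taylor expansion, with a quadratic-times-exponential remainder, is stated with constants that match exactly the weight $e^{2C|x|/\e}|x|^2$ in \eqref{G}, and verifying that the candidate process $N^\e$ really is a supermartingale (not merely that its ``drift'' is nonpositive formally) — this requires the usual localization and a check that the stochastic exponential of the scaled jump martingale is a genuine local martingale, exactly as in Lemma \ref{pure-disc} and Lemma \ref{estimates}. The continuous-part and finite-variation estimates are routine given \eqref{bound}. A secondary bit of care: the hypothesis \eqref{bound} only holds for $t\in[0,T^\e)$, so all the drift estimates are valid only up to $T^\e$, which is why the conclusion is phrased with $\sup_{t\in[0,T^\e]}$ and why one must stop at $T^\e$ before applying the supermartingale inequality.
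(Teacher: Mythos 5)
Your plan is essentially the paper's proof: both hinge on applying It\^o's formula to $(\rho^2+|z|^2)^{1/\e}=\exp(\tfrac1\e\log(\rho^2+|z|^2))$, bounding the finite-variation, continuous-bracket and jump contributions by $\tfrac{2C}{\e}dV(B^\e)$, $\tfrac{4C^2}{\e^2}d\langle M^{\e,c},M^{\e,c}\rangle$ and $\tfrac{4C^2}{\e^2}e^{2C|x|/\e}|x|^2\nu^\e$ respectively, and concluding by Chebyshev at the first exit time with the overshoot $2z_0+CA$; the paper closes the drift estimate with a stochastic Gronwall lemma (\cite[Lemma IX.6.3]{JS13}) applied to $\E^\e\Phi^\e$ rather than your explicit compensated supermartingale, which is only a cosmetic difference. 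One point you must fix rather than leave optional: $Z^\e$ itself is generally \emph{not} a semimartingale ($U^\e$ is only c\`adl\`ag adapted), so It\^o's formula has to be applied to $G^\e:=Z^\e-U^\e=F^\e_-\cdot X^\e$, using $|U^\e|\le z_0$ to transfer the hypothesis \eqref{bound} and the exit event back and forth (this is where the third $z_0$ in $3z_0+CA$ comes from); working with $G^\e$ also gives $G^\e_0=0$, hence the exact numerator $\rho^2$ and the exact constant $C_1=(2C+4C^2)K$, whereas starting from $\phi(Z^\e_0)$ leaves you with a spurious $\log(1+z_0^2/\rho^2)$ that the stated bound does not allow you to absorb.
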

\begin{proof}
  Define $\phi^\e(z) = (\rho^2 + |z|^2)^{1/\e}$. Then
  \begin{equation*}
    \partial_i \phi^\e(z) = \frac{2\phi^\e(z)}{\e(\rho^2+|z|^2)}z_i, \quad \partial_i\partial_j \phi^\e(z) = \frac{2\phi^\e(z)}{\e(\rho^2+|z|^2)} \left( \delta_{ij} + 2\left( \frac{1}{\e}-1 \right) \frac{z_i z_j}{\rho^2+|z|^2}\right).
  \end{equation*}
  Then for small $\e>0$,
  \begin{equation}\label{est-7}
    |D\phi^\e(z)| \le \frac{2|z|}{\e(\rho^2+|z|^2)}\phi^\e(z),\quad |D^2\phi^\e(z)| \le \frac{4}{\e^2(\rho^2+|z|^2)} \phi^\e(z).
  \end{equation}
  Denote $G^\e:=Z^\e - U^\e = F^\e_- \cdot X^\e$. Let $\Phi^\e := \phi^\e(G^\e)$. By It\^o's formula (see, e.g., \cite[Theorem I.4.57]{JS13}),
  \begin{equation*}
    \begin{split}
       \Phi^\e_t =&\ \phi^\e(0) + \partial_i\phi^\e(G^\e_-)F_{j,-}^{\e,i} \cdot B^{\e,j} + \partial_i\phi^\e(G^\e)F^{\e,i}_{j} \cdot M^{\e,j,c} + \partial_i\partial_j\phi^\e(G^\e) F_{k}^{\e,i} F_l^{\e,j} \cdot \langle M^{\e,k,c}, M^{\e,l,c} \rangle \\
         &\ + \left( \phi^\e(G^\e_- + F^\e_- x) - \phi^\e(G^\e_-) \right) * (\mu^\e-\nu^\e) \\
         &\ + \left( \phi^\e(G^\e_- + F^\e_- x) - \phi^\e(G^\e_-) - \partial_i \phi(G^\e_-)F^{\e,i}_{j,-} x^j \right) * \nu^\e.
    \end{split}
  \end{equation*}
  Define a stopping time $T^{\e,a} = \inf\{t\ge0: |Z^\e_t|\ge a \} \wedge T^\e$. When $t\in[0,T^{\e,a})$, we have $\sup_{0\le s\le t}|Z^\e_s| \le a$, then the bound \eqref{bound} and the assumption that $|U^\e|\le z_0$ yield $|D\phi^\e(G^\e_{t})||F^\e_t|\le c_2(\e)$ with $c_2(\e)>0$. Moreover, using Taylor's theorem, there exists $c_3(\e)>0$ such that for all $|x|\le A$ and $t\in[0,T^{\e,a})$,
  \begin{equation*}
    \left| \phi^\e(G^\e_{t-} + F^\e_{t-} x) - \phi^\e(G^\e_{t-}) \right| \le |D\phi^\e(G^\e_{t-}+ \theta F^\e_{t-} x)||F^\e_t||x| \le c_3(\e)|x|,
  \end{equation*}
  where $\theta$ is a $(0,1)$-valued random variable. Then the bound \eqref{G} yields
  \begin{gather*}
    \E^\e \left( \int_0^{T^{\e,a}-} |D\phi^\e(G^\e_{t})|^2|F^\e_{t}|^2 d|\langle M^{\e,c}, M^{\e,c} \rangle|_t\right) < \infty, \\
    \E^\e \left( \int_0^{T^{\e,a}-}\int_{\Ro d} \left| \phi^\e(G^\e_{t-} + F^\e_{t-} x) - \phi^\e(G^\e_{t-}) \right|^2 \nu^\e(dx,dt) \right) < \infty,
  \end{gather*}
  where in the second inequality we used the fact that $\nu^\e$ is support on $\{(x,t):|x|\le A\}$, since $|\Delta X^\e| \le A$. Thus, the stochastic integrals $\partial_i\phi^\e(G^\e)F^{\e,i}_{j} \cdot M^{\e,j,c}$ and $( \phi^\e(G^\e_- + F^\e_- x) - \phi^\e(G^\e_-) ) * (\mu^\e-\nu^\e)$ are all martingales up to $T^{\e,a}-$. This yields for each $\{\F^\e_t\}$-stopping time $S^\e$,
  \begin{equation*}
    \begin{split}
       &\ \E^\e\left( \Phi^\e_{(S^\e \wedge T^{\e,a})-} \right) \\
       =&\ \phi^\e(0) + \E^\e\left( \int_0^{(S^\e \wedge T^{\e,a})-} \partial_i\phi^\e(G^\e_s)F_{\e,j,s}^i dB^{\e,j}_s \right) \\
       &\ + \E^\e\left( \int_0^{(S^\e \wedge T^{\e,a})-} \partial_i\partial_j\phi^\e(G^\e_s) F_{\e,k,s}^i F_{\e,l,s}^j d \langle M^{\e,k,c}, M^{\e,l,c} \rangle_s \right) \\
         &\ + \frac{1}{2}\E^\e\left( \int_0^{(S^\e \wedge T^{\e,a})-} \int_{\Ro d}\left( \phi^\e(G^\e_{s-} + F^\e_{s-} x) - \phi^\e(G^\e_{s-}) - \partial_i \phi^\e(G^\e_{s-})F^{\e,i}_{j,s-} x^j \right) \nu^\e(dx,ds) \right) \\
      =:&\ \phi^\e(0) + I^\e_1 +I^\e_2 +I^\e_3.
    \end{split}
  \end{equation*}
  By \eqref{bound} and \eqref{est-7}, it is easy to get
  \begin{equation*}
    |I^\e_1| \le \frac{2c}{\e} \E^\e \int_0^{(S^\e \wedge T^{\e,a})-} \Phi^\e_s d|V(B^\e)|_s, \quad |I^\e_2| \le \frac{4c^2}{\e^2} \E^\e \int_0^{(S^\e \wedge T^{\e,a})-} \Phi^\e_s d|\langle M^{\e,c}, M^{\e,c} \rangle|_s.
  \end{equation*}
  For $I^\e_3$, using Taylor's theorem, we have for some $(0,1)$-valued random variable $\theta$,
  \begin{equation*}
    \begin{split}
       |I^\e_3| &\le \E^\e \int_0^{(S^\e \wedge T^{\e,a})-} \int_{\Ro d} |D^2 \phi^\e(G^\e_{s-} + \theta F^\e_{s-} x)| |F^\e_{s-}|^2 |x|^2 \nu^\e(dx,ds) \\
         &\le \frac{4c^2}{\e^2}\E^\e \int_0^{(S^\e \wedge T^{\e,a})-} \int_{\Ro d} (\rho^2 + |G^\e_{s-} + \theta F^\e_{s-} x|^2)^{\frac{1}{\e}-1}(\rho^2 + |G^\e_{s-}|^2) |x|^2 \nu^\e(dx,ds) \\
         &\le \frac{4c^2}{\e^2}\E^\e \int_0^{(S^\e \wedge T^{\e,a})-} \int_{\Ro d} \left( 1+ \frac{|G^\e_{s-}||F^\e_{s-}x|}{\rho^2 + |G^\e_{s-}|^2} + \frac{|F^\e_{s-}x|^2}{\rho^2 + |G^\e_{s-}|^2} \right)^{\frac{1}{\e}-1}\Phi^\e_{s-} |x|^2 \nu^\e(dx,ds) \\
         &\le \frac{4c^2}{\e^2}\E^\e \int_0^{(S^\e \wedge T^{\e,a})-}\Phi^\e_{s-} \int_{\Ro d} (1+c|x|)^{\frac{2}{\e}-2} |x|^2 \nu^\e(dx,ds).
    \end{split}
  \end{equation*}
  A change of variable $x=\e y$ yields
  \begin{equation*}
    \begin{split}
      &\ \int_{\Ro d} (1+c|x|)^{\frac{2}{\e}-2} |x|^2 \nu^\e(dx,ds) = \e^2\int_{\Ro d} (1+\e c|y|)^{\frac{2}{\e}-2} |y|^2 \nu^\e( d(\e y),ds) \\
      \le&\ \e^2\int_{\Ro d} e^{2c|y|} |y|^2 \nu^\e( d(\e y),ds) = \int_{\Ro d} e^{2 c|x|/\e} |x|^2 \nu^\e( dx,ds).
    \end{split}
  \end{equation*}
  Therefore,
  \begin{equation*}
    \E^\e\left( \Phi^\e_{(S^\e \wedge T^{\e,a})-} \right) \le \phi^\e(0) + (2c+4c^2) \frac{1}{\e} \E^\e \int_0^{S^\e -} \Phi^\e_{(s\wedge T^{\e,a})-} d\Xi(X^\e)_s.
  \end{equation*}
  Since $\Xi(X^\e) \le K$ uniformly, by virtue of the Gronwall-type inequality in Lemma \ref{Gronwall}, we have
  \begin{equation*}
    \E^\e(\Phi^\e_{T^{\e,a}-}) \le \phi^\e(0)e^{c_1/\e}.
  \end{equation*}
  Note that $|\Delta Z^\e| \le |\Delta U^\e| + |F^\e_-||\Delta X^\e| \le 2z_0 + c A$. Let $b = a - (3z_0 +c A)$. Then by Chebycheff's inequality, for each $a > 3z_0 +c A$,
  \begin{equation*}
    \begin{split}
       \e \log \P^\e\left(\sup_{0\le t\le T^\e} |Z^\e_t|> a \right) & = \e \log \P^\e(|Z^\e_{T^{\e,a}}| \ge a) \le \e \log \P^\e\left(|Z^\e_{T^{\e,a}-}| \ge a - (2z_0 +c A) \right) \\
       & \le \e \log \P^\e\left(|G^\e_{T^{\e,a}-}| \ge b \right) = \e \log \P^\e(\Phi^\e_{T^{\e,a}-} \ge \phi^\e(b)) \\
         & \le c_1 + \e \log \phi^\e(0) - \e \log \phi^\e(b) = c_1 +\log\left( \frac{\rho^2}{\rho^2+b^2}\right).
    \end{split}
  \end{equation*}
  The proof is completed.
\end{proof}

\begin{lemma}\label{LDP-integral}
  Let $\{F^\e\}_{\e>0}$ be a family of $\{\F^\e_t\}_{t\ge0}$-adapted processes. Assume the family $\{X^\e\}_{\e>0}$ is UET. If the family $\{(X^\e,U^\e,F^\e)\}_{\e>0}$ satisfies the LDP with good rate function $I^\sharp$, then the family $\{(X^\e,U^\e,F^\e,F_-^\e\cdot X^\e)\}_{\e>0}$ also satisfies the LDP with the following good rate function:
  \begin{equation*}
    I(x,u,f,w) =
    \begin{cases}
      I^\sharp(x,u,f), & w = f\cdot x \text{ and } x \text{ is locally of finite variation}, \\
      \infty, & \text{otherwise}.
    \end{cases}
  \end{equation*}
\end{lemma}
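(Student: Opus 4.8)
The plan is to first boost the hypotheses to exponential tightness of the \emph{joint} family $\{(X^\e,U^\e,F^\e,F_-^\e\cdot X^\e)\}_{\e>0}$, and then to identify its rate function by an exponential--approximation argument that replaces $F^\e_-$ by a step process.

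First I would establish exponential tightness. Since $\{(X^\e,U^\e,F^\e)\}$ obeys an LDP with a good rate function it is exponentially tight, as is each of its marginals; in particular $\{F^\e\}$ is exponentially tight, and Proposition~\ref{Slominski}(ii) applied to the $\D$-valued process $(X^\e,U^\e,F^\e)$ furnishes stopping times $\{T^{\e,p}_i\}$ of the form \eqref{random-times} and constants $\rho^N_p$ realising \eqref{exp-tight-1}--\eqref{exp-tight-2} for $(X^\e,U^\e,F^\e)$; in particular $w\bigl((X^\e,U^\e,F^\e),[T^{\e,p}_i,T^{\e,p}_{i+1})\bigr)\le 2/p$ for every $i$. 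I would then check that $\{F_-^\e\cdot X^\e\}$ satisfies the two conditions of Proposition~\ref{Slominski}(i) with these \emph{same} random times. For Lemma~\ref{exp-tight}(i), stop at $\sigma^\e_L:=\inf\{t:|F^\e_t|\vee|F^\e_{t-}|>L\}$: on $[0,\sigma^\e_L]$ the integrand $F^\e_-$ is bounded by $L$, so the UET hypothesis makes $\P^\e\bigl(\sup_{t\le N}|(F_-^\e\cdot X^\e)^{\sigma^\e_L}_t|\ge a\bigr)$ exponentially small once $a/L$ is large, while $\P^\e(\sigma^\e_L\le N)\le\P^\e(\sup_{t\le N}|F^\e_t|\ge L)$ is handled by exponential tightness of $\{F^\e\}$; letting $L=L(a)\to\infty$ slowly gives (i). For \eqref{exp-tight-2} I would use, on $[T^{\e,p}_i,T^{\e,p}_{i+1})$,
\[
  (F_-^\e\cdot X^\e)_t-(F_-^\e\cdot X^\e)_{T^{\e,p}_i}
  = F^\e_{T^{\e,p}_i}\bigl(X^\e_t-X^\e_{T^{\e,p}_i}\bigr)
  +\int_{(T^{\e,p}_i,\,t]}\bigl(F^\e_{u-}-F^\e_{T^{\e,p}_i}\bigr)\,dX^\e_u ;
\]
on $\{\sup_{t\le N}|F^\e_t|\le L\}$ the first term is at most $2L/p$, and the second is an increment of a single stochastic integral with predictable integrand of sup--norm $\le 2/p$, which the UET hypothesis forces to be uniformly (in $\e$) exponentially small as $p\to\infty$. (Here one uses the routine extension of Definition~\ref{UET-def} from simple to general bounded predictable integrands, and reduces the matrix--valued $F^\e$ to the scalar statement componentwise.) Combined with exponential tightness of $\{(X^\e,U^\e,F^\e)\}$, this yields exponential tightness of the joint family.

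Next, by exponential tightness and uniqueness of LDP rate functions it suffices to show that any subsequential LDP limit of the joint family has rate function $I$. Fix such a subsequence with good rate function $\tilde I$. The contraction principle applied to the projection $(x,u,f,w)\mapsto(x,u,f)$ gives $\inf_w\tilde I(x,u,f,w)=I^\sharp(x,u,f)$, and applied to the projection onto the $x$--coordinate it shows $\{X^\e\}$ itself obeys an LDP; since $\{X^\e\}$ is UET (testing $X^\e$ against deterministic $\pm1$--valued predictable integrands attached to finer and finer partitions), its rate function — hence $I^\sharp$ — is finite only on locally finite--variation paths. It therefore remains to prove $\tilde I(x,u,f,w)=\infty$ whenever $w\ne f\cdot x$. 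For this I would fix a refining sequence of deterministic partitions with mesh $\to0$, set $F^{\e,m}_t:=\sum_i F^\e_{t^m_i}\ind_{(t^m_i,t^m_{i+1}]}(t)$ and $Y^{\e,m}:=F^{\e,m}\cdot X^\e$. For each $m$, the Riemann--sum functional $(x,u,f)\mapsto\bigl(x,u,f,\sum_i f(t^m_i)(x_{\cdot\wedge t^m_{i+1}}-x_{\cdot\wedge t^m_i})\bigr)$ is continuous on the closure of the effective domain of $I^\sharp$ (where $x$ is locally finite--variation, hence continuous off a countable set), so the contraction principle gives $\{(X^\e,U^\e,F^\e,Y^{\e,m})\}$ an LDP with rate $I^\sharp(x,u,f)$ on the graph of that functional and $\infty$ elsewhere. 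On the other hand $Y^{\e,m}-F_-^\e\cdot X^\e=(F^{\e,m}-F^\e_-)\cdot X^\e$, and on the exponentially overwhelming compact--containment event of $\{F^\e\}$ one has $\|F^{\e,m}-F^\e_-\|_\infty\le w_T(F^\e,\text{mesh})\to0$ uniformly as $m\to\infty$, so feeding this small bounded integrand into the UET hypothesis shows that $\{Y^{\e,m}\}$ is an exponentially good approximation of $\{F_-^\e\cdot X^\e\}$, uniformly in $\e$. The exponential--approximation theorem (see \cite[Theorem~4.2.16]{DZ98}) then identifies $\tilde I$ with the appropriate $\liminf_m$ of the approximant rate functions; because the $m$--th Riemann sums of $(f,x)$ converge to $f\cdot x$ precisely in the sense of the Introduction's definition of $\cdot$ when $x$ is locally finite--variation, this limit is finite only for $w=f\cdot x$, where it equals $I^\sharp(x,u,f)$. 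This is exactly $I$, whose goodness is inherited from that of $I^\sharp$ together with closedness of the constraint $\{w=f\cdot x,\ x\ \text{locally of finite variation}\}$.

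The main obstacle I anticipate is the combination in the last step: arranging the step approximation so that it is \emph{simultaneously} an exponentially good approximation of the stochastic integral — which rests on marrying the UET estimate with the exponential tightness of $\{F^\e\}$ to control the modulus of continuity of $F^\e$ — and compatible with the contraction principle for the approximants, the latter needing care at the (countably many) jump times of the limiting finite--variation paths and, in the very first step, the passage from simple to general bounded predictable integrands inside the UET bound of Definition~\ref{UET-def}.
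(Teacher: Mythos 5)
The paper's own proof is a two--line reduction: it observes that $F_-^\e\cdot X^\e=(U_-^\e,F_-^\e)\cdot(0,X^\e)^T$ and invokes \cite[Theorem 1.2]{Gar08}, which is exactly the statement that UET of the integrator family together with an LDP for the (integrator, integrand) pair yields the LDP for the triple including the stochastic integral, with the stated rate function. You instead set out to prove that cited theorem from scratch. Your overall architecture --- exponential tightness of the joint family via Proposition \ref{Slominski}, then identification of the rate function via exponentially good approximations by discretized integrands --- is the right one and mirrors how the cited result is actually proved; your Step 1 is sound modulo the (acknowledged, and standard) extension of Definition \ref{UET-def} from simple to general bounded predictable integrands.

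The identification step, however, has genuine gaps. First, for c\`adl\`ag $F^\e$ and a \emph{deterministic} partition, $\sup_t|F^{\e,m}_t-F^\e_{t-}|$ is \emph{not} bounded by $w_T(F^\e,\mathrm{mesh})$: if $F^\e$ jumps strictly inside a partition cell, the discrepancy equals the full jump size no matter how fine the mesh, because $w_T$ is an infimum over partitions \emph{adapted to the path}, not over the fixed grid $\{t^m_i\}$. Hence the claimed uniform smallness of $(F^{\e,m}-F^\e_-)$ fails and the exponentially-good-approximation step collapses as written; the discretization must be taken along the adapted stopping times \eqref{random-times} (which is precisely what guarantees oscillation at most $2/p$ on each cell), as you already did in Step 1. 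Second, once the partition points are random, the Riemann-sum functional is not continuous on $\D$ --- point evaluations are discontinuous at jump times --- and the contraction principle needs continuity on the whole space (or an extended version), not merely on the closure of the effective domain of $I^\sharp$; one needs the continuity statements of \cite[Proposition VI.2.11, VI.2.12]{JS13} together with the careful choice of levels $a(\e,p,i)$ as in Proposition \ref{Slominski}(ii). Third, your argument that UET forces $I^\sharp$ to vanish off locally finite-variation $x$ tests $X^\e$ against signs $\mathrm{sign}(x(t_{i+1})-x(t_i))$, which give an anticipating, hence non-predictable, integrand; the claim is true but is itself part of the content of \cite[Theorem 1.2]{Gar08} and requires a different argument. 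Since every one of these delicate points is exactly what the cited theorem settles, the efficient route is the paper's reduction.
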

\begin{proof}
   Observe that $F_-^\e\cdot X^\e = (U_-^\e,F_-^\e) \cdot (0,X^\e)^T$. Then the result is a corollary of \cite[Theorem 1.2]{Gar08}.
\end{proof}

Now we are in position to prove the main result Theorem \ref{LDP-special}. 

\begin{proof}[\textbf{Proof for Theorem \ref{LDP-special}}]

  Firstly, we note that the UET property of the family $\{X^\e\}$ is implied by the assumptions and Proposition \ref{exp-tight-general} as well as Lemma \ref{UET-exp-tight}, since $\{X^\e\}$ is exponentially tight. Our proof is divided into four steps.

  \emph{Step 1 (First localization).} We suppose $\{(X^\e,U^\e,Y^\e)\}$ is exponentially tight if in addition, the following condition is satisfied:
  \begin{condition}\label{Cond. 1}
    The processes $X^\e$ and $U^\e$ are uniformly bounded by a constant $K_1>0$.
  \end{condition}
  We deduce it holds in general. For each $p>0$, define $f_p:\R^{d} \to \R^{d}$ to be a $\C^2$ function with $|f_p|\le 2p$, $|Df_p|\vee |D^2f_p|\le 1$ and $f_p(x) = x$ for $|x|\le p$, also define $g_p:\R^{n} \to \R^{n}$ to be a $\C^2$ bounded function with $g_p(u) = u$ for $|u|\le p$. Let $X^{\e,p}: = f_p(X^\e)$ and $U^{\e,p}:=g_p(U^\e)$. Then for each $\e$, the following SDE has a unique solution
  \begin{equation*}
    Y^{\e,p} = U^{\e,p} +F(Y^{\e,p}_-) \cdot X^{\e,p}.
  \end{equation*}
  By the continuity and boundedness of $f_p$, each family $\{(X^{\e,p},U^{\e,p})\}_{\e>0}$ is also exponentially tight and uniformly bounded.
  Obviously, $|\Delta X^{\e,p}| = |f_p(X^\e) - f_p(X^\e_-)| \le 4p$, which implies that each $X^{\e,p}$ is a special semimartingale. Denote by $\mu^{\e,p}$ the jump measure associated to each $X^{\e,p}$ and by $\nu^{\e,p}$ its L\'evy system. Then for every $E\in\B(\Ro d)$,
  \begin{equation*}
    \mu^{\e,p}(E\times [0,t]) = \int_0^t\int_{\R^d} \ind_E(f_p(X_{s-}+x) - f_p(X_{s-})) \mu^\e(dx,ds).
  \end{equation*}
  This relation carries over to the L\'evy systems. Hence, $\nu^{\e,p}(\R^d\times\{t\}) =0$, which yields each $X^{\e,p}$ is quasi-left-continuous (see \cite[Corollary II.1.19]{JS13}). Let $(B^\e(h),C^\e,\nu^\e)$ be the characteristics of each $X^{\e,p}$ associated to truncation function $h$, let $X^{\e,p,c}$ be its continuous martingale part. 
  Using It\^o's formula, it is easy to deduce that
  \begin{align*}
    X^{\e,p,c} =&\ \partial_i f_p(X^\e_-) \cdot X^{\e,i,c}, \\
    B^{\e,p}(h) =&\ \partial_i f_p(X^\e_-)\cdot B^{\e,i}(h) + \frac{1}{2} \partial_i\partial_j f_p(X^\e_-) \cdot\langle X^{\e,i,c}, X^{\e,i,c}\rangle \\
    &\ + [h(f_p(X^\e_-+x) - f_p(X^\e_-)) - \partial_i f_p(X^\e_-) x^i] * \nu^\e.
  \end{align*}
  Since $|Df_p|\vee |D^2f_p|\le 1$, we have for $\e>0$ small enough,
  \begin{equation*}
    \begin{split}
      V(B^{\e,p}(h_b))_t \le&\ V(B^\e(h_b))_t + \frac{1}{2}\langle X^{\e,c},X^{\e,c}\rangle_t + \int_{|x|> b} |x| \nu^\e(dx,[0,t]) + \int_0^t\int_{\Ro d} |x|^2 \nu^\e(dx,ds) \\
      \le&\ V(B^\e(h_b))_t + \frac{1}{2}\langle X^{\e,c},X^{\e,c}\rangle_t + \left( 1+\frac{1}{b} \right)\int_{\Ro d} |x|^2 \nu^\e(dx,[0,t]) \\
      \le&\ V(B^\e(h_b))_t + \frac{1}{2}\langle X^{\e,c},X^{\e,c}\rangle_t + \e \int_{\Ro d} \exp(|x|/(\e r)) \nu^\e(dx,[0,t]),
    \end{split}
  \end{equation*}
  \begin{equation*}
    \frac{1}{\e}C^{\e,p} = \frac{1}{\e} \langle X^{\e,p,c},X^{\e,p,c}\rangle \le \frac{1}{\e}\langle X^{\e,c},X^{\e,c}\rangle = \frac{1}{\e} C^\e,
  \end{equation*}
  \begin{equation*}
    \begin{split}
      \e \int_{\Ro d} \exp\left(\frac{|x|}{\e r}\vee 1 \right) \nu^{\e,p}(dx,[0,t]) &= \e \int_{\Ro d} \exp\left(\frac{|f_p(X_{s-}+x) - f_p(X_{s-})|}{\e r}\vee 1 \right) \nu^\e(dx,[0,t]) \\
         & \le \e \int_{\Ro d} \exp\left(\frac{|x|}{\e r}\vee 1 \right) \nu^\e(dx,[0,t]).
    \end{split}
  \end{equation*}
  Hence, for each $t>0$, the three families in left hand side of the above inequalities are exponentially tight, by the exponential tightness of \eqref{three-families}. So our assumptions yield that the family $\{(X^{\e,p},U^{\e,p},Y^{\e,p})\}$ is exponentially tight, for all $p>0$.

  For each $\e, p>0$, define a stopping time
  $$T^{\e,p}:=\inf\{t\ge0: |X^\e_t|+|U^\e_t| \ge p\}.$$
  Then $T^{\e,p}$ is nondecreasing in $p$, and $(X^{\e,p},U^{\e,p}) = (X^{\e},U^{\e})$ on the interval $[0,T^{\e,p})$. By virtue of the exponential tightness of $(X^{\e},U^{\e})$ and Lemma \ref{exp-tight}, we know that for every $T>0$ and $M>0$, there exists $p_0>0$, such that
  \begin{equation*}
    \limsup_{\e\to0}\e\log \P^\e( T^{\e,p_0} \le T ) = \limsup_{\e\to0}\e\log \P^\e\left(\sup_{0\le t\le T}(|X^\e_t|+|U^\e_t|) \ge p_0 \right) \le -M.
  \end{equation*}
  Hence
  \begin{equation}\label{exp-11}
    \begin{split}
       &\ \P^\e\left( \sup_{0\le t\le T}(|X^{\e}_t| + |U^{\e}_t| + |Y^{\e}_t|)\ge a \right) \\
       =&\ \P^\e\left( \sup_{0\le t\le T}(|U^{\e}_t| + |Y^{\e}_t| + |X^{\e}_t|)\ge a, T^{\e,p_0}>T \right) + \P^\e\left( \sup_{0\le t\le T}(|U^{\e}_t| + |Y^{\e}_t| + |X^{\e}_t|)\ge a, T^{\e,p_0}\le T \right) \\
       \le&\ \P^\e\left( \sup_{0\le t\le T}(|U^{\e,p_0}_t| + |Y^{\e,p_0}_t| + |X^{\e,p_0}_t|)\ge a \right) + \P^\e\left( T^{\e,p_0}\le T \right),
    \end{split}
  \end{equation}
  and
  \begin{equation}\label{exp-12}
    \P^\e\left( w_T((U^{\e},Y^{\e},X^{\e}),\rho) \ge\eta \right) \le \P^\e\left( w_T((U^{\e,p_0},Y^{\e,p_0},X^{\e,p_0}),\rho) \ge\eta \right) + \P^\e\left( T^{\e,p_0}\le T \right).
  \end{equation}
  Therefore, the exponential tightness of $\{(X^{\e},U^{\e},Y^{\e})\}$ follows from that of $\{(X^{\e,p_0},U^{\e,p_0},Y^{\e,p_0})\}$ and Lemma \ref{exp-tight}.

  \emph{Step 2 (Second localization).} Suppose $\{(X^\e,U^\e,Y^\e)\}$ is exponentially tight if in addition, Cond. \ref{Cond. 1} and the following condition are both satisfied:
  \begin{condition}\label{Cond. 2}
    The increasing processes $\Xi(X^\e)$ associated to $X^\e$ in \eqref{G} are also uniformly bounded by a constant $K_2>0$.
  \end{condition}
  We deduce it still holds when only Cond. \ref{Cond. 1} is satisfied. For each $\e, p>0$, define a stopping time
  $$T^{\e,p}:=\inf\{t\ge0: \Xi(X^\e)_t \ge p\}.$$
  Let $X^{\e,p} := X^\e_{\cdot\wedge T^{\e,p} }$, $U^{\e,p} := U^\e_{\cdot\wedge T^{\e,p} }$, and each $Y^{\e,p}$ be the solution of the SDE
  \begin{equation*}
    Y^{\e,p} = U^{\e,p} +F(Y^{\e,p}_-) \cdot X^{\e,p},
  \end{equation*}
  Then obviously each $X^{\e,p}$ is a special semimartingale, and $Y^{\e,p} = Y^\e_{\cdot\wedge T^{\e,p} }$. It is obvious that (cf. \cite[Eq. (VI.1.9)]{JS13}) for all $T>0$,
  \begin{align*}
    \sup_{0\le t\le T}(|X^{\e,p}_t| + |U^{\e,p}_t|) &= \sup_{0\le t\le T \wedge T^{\e,p}}(|X^{\e}_t| + |U^{\e}_t|) \le \sup_{0\le t\le T }(|X^{\e}_t| + |U^{\e}_t|), \\
    w_T((X^{\e,p},U^{\e,p}),\rho) &= w_{T\wedge T^{\e,p}}((X^{\e},U^{\e}),\rho) \le w_{T}((X^{\e},U^{\e}),\rho).
  \end{align*}
  Since $\{(X^\e,U^\e)\}$ is exponentially tight, Lemma \ref{exp-tight} yields that $\{(X^{\e,p},U^{\e,p})\}$ is also exponentially tight, for each $p>0$. Let $X^{\e,p} = M^{\e,p} + B^{\e,p}$ be the canonical decomposition of each $X^{\e,p}$, and let $\nu^{\e,p}$ be the L\'evy system associated to each $X^{\e,p}$. Then for all $t>0$ and $E\in\B(\Ro d)$,
  \begin{align*}
    V(B^{\e,p}(h_b))_t & = V(B^\e(h_b))_{t\wedge T^{\e,p}} \le V(B^\e(h_b))_{t}, \\
    \langle X^{\e,p,c},X^{\e,p,c}\rangle_t & = \langle X^{\e,c},X^{\e,c}\rangle_{t\wedge T^{\e,p}} \le \langle X^{\e,c},X^{\e,c}\rangle_{t}, \\
    \nu^{\e,p}(E\times [0,t]) & = \nu^{\e}(E\times [0,t\wedge T^{\e,p}]) \le \nu^{\e}(E\times [0,t]).
  \end{align*}
  Using these one can easily deduce that the three families associated to $X^{\e,p}$ as in \eqref{three-families} is also exponentially tight. Moreover,
  \begin{equation*}
    \nu^{\e,p}(\R^d\times\{t\}) = \nu^{\e}(\R^d\times (\{t\} \cap [0,T^{\e,p}])) \le \nu^\e(\R^d\times\{t\}) = 0,
  \end{equation*}
  which leads to the quasi-left-continuity of $X^{\e,p}$, and thus $\Delta B^{\e,p} \equiv 0$ by \cite[Proposition II.2.29]{JS13}. Let $\Xi(X^{\e,p})$ be the increasing process associated to $X^{\e,p}$ as in \eqref{G}. Then $\Delta \Xi(X^{\e,p}) \equiv 0$. Hence, $\Xi(X^{\e,p})$ is uniformly bounded by $p$, for each $p>0$. Therefore, our assumptions yield each family $\{(X^{\e,p},U^{\e,p},Y^{\e,p})\}_{\e>0}$ is exponentially tight.

  Since the three family in \eqref{three-families} is exponentially tight, and since $0<r<\frac{1}{4c}$, we have for $\e>0$ small enough,
  \begin{equation*}
    \begin{split}
      &\ \frac{1}{\e} \int_{\Ro d} e^{2 c|x|/\e} |x|^2 \nu^\e( dx,[0,t]) = \e\int_{\Ro d} e^{2c|y|} |y|^2 \nu^\e( d(\e y),ds) \\
      \le&\ \e\int_{\Ro d} e^{(|y|/r)\vee1} \nu^\e( d(\e y),ds) = \e \int_{\Ro d} e^{(|x|/(\e r))\vee1} \nu^\e(dx,[0,t]).
    \end{split}
  \end{equation*}
  Thus, the family $\{\Xi(X^\e)_t\}_{\e>0}$ is exponentially tight for each $t>0$. Using the fact that each $\{\Xi(X^\e)\}$ is increasing, we have for every $T>0$ and $M>0$, there exists $p_0>0$, such that
  \begin{equation*}
    \limsup_{\e\to0}\e\log \P^\e( T^{\e,p_0} \le T ) = \limsup_{\e\to0}\e\log \P^\e\left( \Xi(X^\e)_T \ge p_0 \right) \le -M.
  \end{equation*}
  Then a similar argument as \eqref{exp-11} and \eqref{exp-12} yields that the family $\{(X^\e,U^{\e},Y^{\e})\}$ is exponentially tight.

  \emph{Step 3 (Exponential tightness of $\{(X^\e,U^\e,Y^\e)\}$).} In this step, we will assume Cond. \ref{Cond. 1} and Cond. \ref{Cond. 2} to hold, and prove the exponential tightness of $\{(X^\e,U^\e,Y^\e)\}$.

  Since $(X^\e,U^\e)$ obeys the LDP with good rate function, by Proposition \ref{Slominski}.(ii), we can associated with $(X^\e,U^\e)$ positive constants $\rho_p^N$ and $a(\e,p,i)\in (\frac{1}{2p},\frac{1}{p}]$, and stopping times $T^{\e,p}_i$ defined in \eqref{random-times}, with $(X^\e,U^\e)$ in place of $X^\e$, which satisfy \eqref{exp-tight-1} and \eqref{exp-tight-2}. To prove the exponential tightness of $\{(X^\e,U^\e,Y^\e)\}$, by Proposition \ref{Slominski}.(i), it is enough to show that the family $\{Y^\e\}$ satisfies Lemma \ref{exp-tight}.(i) as well as \eqref{exp-tight-2} with the same stopping times $T^{\e,p}_i$.


  Since $F$ is bounded Lipschitz and $X^\e$, $U^\e$ are uniformly bounded by $K_1$, we have $|\Delta X^\e|\le 2K_1$ and  $|F(Y^\e)|^2 \le c(1+|Y^\e|^2)$. By Lemma \ref{est-8}, for all $T>0$ and every $a > (3+2c)K_1$, $0<\e \ll 1$,
  \begin{equation}\label{est-9}
    \e \log \P^\e\left( \sup_{t\in[0,T]} |Y^\e_t|\ge a \right) \le c_1+ \log\left( \frac{1}{1+(a - (3+2c)K_1)^2}\right),
  \end{equation}
  with $c_1 = (2c+4c^2)K_2$. This implies that Lemma \ref{exp-tight}.(i) holds for $\{Y^\e\}$.

  To prove \eqref{exp-tight-2} for $\{Y^\e\}$, we first define
  \begin{equation*}
    X^{\e,p}_t = X^\e_{T^{\e,p}_i}, Y^{\e,p}_t = Y^\e_{T^{\e,p}_i}, U^{\e,p}_t = U^\e_{T^{\e,p}_i}, \quad\text{when } T^{\e,p}_i \le t < T^{\e,p}_{i+1}.
  \end{equation*}
  If $T^{\e,p}_i \le t< T^{\e,p}_{i+1}$, we have
  \begin{equation*}
    Y^\e_t - Y^{\e,p}_t = U^\e_t - U^{\e,p}_t + \int_{T^{\e,p}_i}^t F(Y^\e_{s-}) dX^\e_s.
  \end{equation*}
  Using the boundedness of $F$, for all $t$,
  \begin{equation*}
      |Y^\e_t - Y^{\e,p}_t| \le |U^\e_t - U^{\e,p}_t| + c |X^\e_t - X^{\e,p}_t| \le \frac{1+c}{p}.
  \end{equation*}
  Set $S^{\e,r} := \inf\{t\ge0: |Y^\e_t|\ge r\}$. Hence, for every $\rho, \eta>0$ and $0<\e \le 1$,
  \begin{equation}\label{est-10}
    \left[ \P^\e\left( \sup_{t\in[0,S^{\e,r}]} |Y^\e_t-Y^{\e,p}_t|\ge \eta \right) \right]^\e \le \ind_{\left\{\frac{1+c}{p}\ge \eta \right\}}.
  \end{equation}
  Note that for all $N>0$,
  \begin{equation*}
    \{S^{\e,r}\ge N\} \subset \left\{ \sup_{i} w\left(Y^\e,[T^{\e,p}_i,T^{\e,p}_{i+1}) \cap [0,N]\right) \le \sup_{t\in[0,S^{\e,r}]} |Y^\e_t - Y^{\e,p}_t| \right\},
  \end{equation*}
  and then
  \begin{equation*}
    \left\{ \sup_{i} w\left(Y^\e,[T^{\e,p}_i,T^{\e,p}_{i+1}) \cap [0,N]\right) \ge \eta \right\} \subset \left\{\sup_{t\in[0,S^{\e,r}]} |Y^\e_t - Y^{\e,p}_t| \ge \eta\right\} \cup \left\{\sup_{t\in[0,N]} |Y^\e_t|\ge r\right\}.
  \end{equation*}
  Combining with \eqref{est-9} and \eqref{est-10}, we have for $r$ large enough,
  \begin{equation*}
    \begin{split}
       \left[ \P^\e \left( \sup_{i} w\left(Y^\e,[T^{\e,p}_i,T^{\e,p}_{i+1}) \cap [0,N]\right) \ge \eta \right)\right]^\e \le \ind_{\left\{\frac{c}{p}\ge \eta \right\}} + \exp\left\{ c_1+ \log\left( \frac{1}{1+(r - (3+2c)K_1)^2}\right) \right\},
    \end{split}
  \end{equation*}
  By letting first $p\to\infty$ and then $r\to\infty$, we obtain \eqref{exp-tight-2} for $\{Y^\e\}$.

  \emph{Step 4 (Identification of the rate function).} In this final step, let the family $\{(X^\e,U^\e,Y^\e)\}_{\e>0}$ be exponentially tight. We show that for each subsequence $\{(X^{\e_k},U^{\e_k},Y^{\e_k})\}_{k=1}^\infty$, with $\e_k\to0$ as $k\to\infty$, which obeys an LDP, the rate function $I$ is given by \eqref{rate-func}. For notational simplicity, we still denote the subsequence $\e_k$ by $\e$.

  We follow the lines of \cite[Theorem 6.1]{Gan18}. By the contraction principle, the family $(X^\e,U^\e,F(Y^\e))$ obeys the LDP with good rate funtion $I^\sharp(x,u,f) = \inf\{I(x,u,y): f=F(y)\}$. Since $Y^\e = U^\e + F(Y^\e_-)\cdot X^\e$, Lemma \ref{LDP-integral} and the contraction principle yields that the family $(X^\e,U^\e,Y^\e,F(Y^\e))$ obeys the LDP with good rate function
  \begin{equation}\label{rate-J}
    \begin{split}
      J(x,u,y,f) & =
      \begin{cases}
        I^\sharp(x,u,f), & y = u+f\cdot x \text{ and } x \text{ is locally of finite variation}, \\
        \infty, & \text{otherwise}.
      \end{cases} \\
         & =
      \begin{cases}
        \inf\{I(x,u,y'): f=F(y')\}, & y = u+f\cdot x \text{ and } x \text{ is locally of finite variation}, \\
        \infty, & \text{otherwise}.
      \end{cases}
    \end{split}
  \end{equation}
  But the contraction principle yields $I(x,u,y) = \inf_f \{J(x,u,y,f)\}$. Hence, if $x$ is of infinite variation, then by \eqref{rate-J}, $J(x,u,y,f) = \infty$ and $I(x,u,y) = \infty$.

  On the other hand, using the contraction principle once again, the rate function $J$ is
  \begin{equation}\label{rate-J-2}
    J(x,u,y,f) =
    \begin{cases}
      I(x,u,y), & f = F(y), \\
      \infty, & \text{otherwise}.
    \end{cases}
  \end{equation}
  Suppose $x$ is locally of finite variation but $y\ne u+F(y)\cdot x$, we will prove that $J(x,u,y,f) = \infty$ and so $I(x,u,y) = \infty$. If $y \ne u+f\cdot x$, then \eqref{rate-J} yields $J(x,u,y,f) = \infty$. If $y = u+f\cdot x$, then $f \ne F(y)$, and $J(x,u,y,f) = \infty$ follow from \eqref{rate-J-2}.

  Suppose now $I(x,u,y)<\infty$. Then the previous arguments yield that $x$ is locally of finite variation and $y = u+F(y)\cdot x$. Again by the contraction principle, $I'(x,u) = \inf_{y'}\{I(x,u,y')\}$, and obviously $I'(x,u) \le I(x,u,y)$. If $I'(x,u) <I(x,u,y)$, then there exists $y'$ such that $I(x,u,y')<I(x,u,y) <\infty$. Hence, $y' = u+F(y')\cdot x$, which yields $y = y'$ by the uniqueness. Therefore, we have $I(x,u,y)=I'(x,u)$ in this case. The representation \eqref{rate-func} follows.
\end{proof}

\section{L\'evy-It\^o SDEs}\label{sec-5}

For each $\e>0$, let $X^\e$ be a quasi-left-continuous $d$-dimensional semimartingale on $(\Omega^\e,\F^\e,\P^\e)$ with characteristics $(B^\e,C^\e,\nu^\e)$ associated to a truncation function $h_b(x) = x\ind_{\{|x|\le b\}}$, $b>0$, and with the measure $\mu^\e$ its jump measure. Then the following representations hold:
$$X^\e = X^\e_0 + B^\e + X^{\e,c} + h_b * (\mu^\e-\nu^\e) + (x-h_b(x)) * \mu^\e,$$
where $X^{\e,c}$ is the continuous martingale part of $X^\e$ so that $C^\e = \langle X^{\e,c}, X^{\e,c}\rangle$. 
We consider the following SDEs of L\'evy-It\^o-type,
\begin{equation}\label{sde-2}
  \begin{split}
    Y^\e_t =&\ U^\e_t + \int_0^t F_1(Y^\e_{s-}) dB^\e_s + \int_0^t F_2(Y^\e_{s-}) dX^{\e,c}_s \\
       &\ + \int_0^t\int_{0<|x|\le b} F_3(Y^\e_{s-})x (\mu^\e-\nu^\e)(dx,ds) + \int_0^t\int_{|x|> b}  F_3(Y^\e_{s-})x \mu^\e(dx,ds).
  \end{split}
\end{equation}
If we let $F(y)=(F_1(y),F_2(y),F_3(y))$ and
\begin{equation*}
  \hat X^\e = (B^\e, X^{\e,c}, h_b * (\mu^\e-\nu^\e) + (x-h_b(x)) * \mu^\e)^\top.
\end{equation*}
Then we can rewrite \eqref{sde-2} into the form of \eqref{sde} as following
\begin{equation}\label{sde-3}
  Y^\e = U^\e + F(Y^\e_-) \cdot \hat X^\e.
\end{equation}
Since each $X^\e$ is a quasi-left-continuous, $\Delta B^\e = 0$ for all $\e$. Then it is easy to verify that each $\hat X^\e$ is a $3d$-dimensional semimartingale with jump measure $\hat\mu^\e(dx,dt) = \mu^\e(dx_3,dt)$, 
where $x=(x_1,x_2,x_3)\in\R^{3d}$. The same relation holds for the L\'evy system $\hat\nu^\e$ of $\hat X^\e$ and $\nu^\e$.
Denote by $\hat B^\e$ and $\hat C^\e$ the first and second characteristics of $\hat X^\e$ associated to $h_b$. Then
\begin{equation*}
  \hat B^\e = (B^\e,0,0)^\top,\quad \hat C^\e = (0,C^\e,0)^\top.
\end{equation*}
Applying Theorem \ref{LDP-special} to the equation \eqref{sde-3}, we have
\begin{corollary}\label{LDP-Ito}
  Let $F_1, F_2, F_3$ be bounded Lipschitz functions, with both Lipschitz constants and themselves bounded by $c>0$. Assume there exists $0<r<\frac{1}{4c}$ such that for each $t>0$, the following three real-valued families are all exponentially tight,
  \begin{equation}\label{three-families-2}
    \{V(B^\e)_t\}_{\e>0}, \quad \left\{\frac{1}{\e} C^\e_t\right\}_{\e>0}, \quad \left\{ \e \int_{\Ro d} \exp\left(\frac{|x|}{\e r}\vee 1 \right) \nu^\e(dx,[0,t]) \right\}_{\e>0}, 
  \end{equation}
  For each $\e>0$, let $Y^\e$ be the solution of \eqref{sde-2}. If the family $\{(\hat X^\e,U^\e)\}_{\e>0}$ satisfies the LDP with good rate function $I'$ on $\D_{3d+n}$, then the family $\{Y^\e\}_{\e>0}$ satisfies the LDP, with good rate function \eqref{rate-func-2} and with $F=(F_1,F_2,F_3)$.
\end{corollary}

\begin{example}\label{examp}
  Consider for each $0<\e\le1$ an additive process $X^\e = \{X^\e_t\}_{t\ge0}$  (\cite[Section 9]{Sat99} or \cite[Section II.4]{JS13}) given by
  \begin{equation*}
    X^\e_t = b^\e_t + W^\e_t + L^\e_t,
  \end{equation*}
  where $b^\e: [0,\infty)\to\R^d$ is a deterministic continuous functions, $W^\e=\{W^\e_t\}_{t\ge0}$ is a $d$-dimensional (nonstandard) Wiener process with quadratic covariation $\langle W^\e,W^\e\rangle = c^\e$ and $c^\e$ is a deterministic continuous symmetric nonnegative-definite $d\times d$ matrix-valued functions on $[0,\infty)$, $L^\e$ is a  $d$-dimensional pure jump process given by
  \begin{equation}\label{levy}
    L^\e_t := \e\int_0^t\int_{0<|x|\le 1} x \tilde N^{\e^{-1}}(dx,ds) + \e\int_0^t\int_{|x|>1} x N^{\e^{-1}}(dx,ds),
  \end{equation}
  where $N^{\e^{-1}}$ is a (inhomogeneous) Poisson random measure, independent of $W$, with deterministic intensity measure $\e^{-1} \nu$ and compensated random measure $\tilde N^{\e^{-1}}$, and $\nu$ is a given $\sigma$-finite measure on $(\Ro d) \times[0,\infty)$ satisfying $\nu((\Ro d) \times\{t\})=0$ and $\int_{\Ro d} (|x|^2\wedge 1) \nu(dx,[0,t])<\infty$ for all $t\ge0$. 
  When $\e=1$, we denote $L := L^1$ for simplicity. Then $\nu$ is exactly the L\'evy measure of $L$. It is easy to see that $L^\e \stackrel{\mathtt d}{=} \e L_{\cdot/\e}$. 
  To identify the characteristics (or generating triplet) of each $X^\e$, we define a new family of Poisson random measures $\mathcal N^\e$ by
  \begin{equation*}
    \mathcal N^\e(A\times[0,t]) = N^{\e^{-1}}((A/\e)\times[0,t]), \quad\text{ for } A\in\B(\Ro d), t>0,
  \end{equation*}
  where $A/\e:=\{x/\e:x\in A\}$. Then $\mathcal N^\e$ has intensity measure $\e^{-1} \nu(\e^{-1}dx,dt)$. Denote the associated compensated random measure of $\mathcal N^\e$ by $\tilde{\mathcal N}^\e$. We use the change of variable $y=\e x$ to rewrite \eqref{levy} as following,
  \begin{equation*}
    \begin{split}
      L^\e_t &= \int_0^t\int_{0<|y|\le \e} y \tilde  N^{\e^{-1}}(dy/\e,ds) + \int_0^t\int_{|y|>\e} y  N^{\e^{-1}}(dy/\e,ds) \\
      &= \int_0^t\int_{0<|y|\le \e} y \tilde{\mathcal N}^\e(dy,ds) + \int_0^t\int_{|y|>\e} y \mathcal N^\e(dy,ds) \\
      &= \int_0^t\int_{0<|y|\le 1} y \tilde{\mathcal N}^\e(dy,ds) + \int_0^t\int_{|y|>1} y \mathcal N^\e(dy,ds) + \int_{1<|x|\le 1/\e} x \nu(dx,[0,t]).
    \end{split}
  \end{equation*}
  Hence the characteristics $(B^\e,C^\e,\nu^\e)$ of $X^\e$ associated to the truncation function $h_1$ is deterministic and given by
  \begin{equation*}\left\{
    \begin{array}{l}
      B^\e_t = b^\e_t + \int_{1<|x|\le 1/\e} x \nu(dx,[0,t]), \\
      C^\e_t = c^\e_t, \\
      \nu^\e(dx,dt) = \e^{-1} \nu(\e^{-1}dx, dt).
    \end{array} \right.
  \end{equation*}
  To ensure the conditions in Corollary \ref{LDP-Ito}, we make the following assumptions:
  \begin{itemize}
    \item There exists a function $K:[0,\infty)\to[0,\infty)$ such that for all $0<\e\le1$ and $t\ge0$,
        $$|V(b^\e)_t| \vee \textstyle{\frac{1}{\e}} |c^\e_t| \le K(t).$$
    \item There exists an $r>0$, such that for all $t\ge0$ and $\lambda>0$,
        \begin{equation}\label{con-exp-int}
          \nu(\{0<|x|\le r\},[0,t])<\infty, \quad\text{and}\quad \int_{|x|>r} e^{\lambda |x|} \nu(dx,[0,t])<\infty.
        \end{equation}
    \item The family $\{W^\e\}$ satisfies an LDP in the space $\C_d:=\C(\R_+,\R^d)$ with uniform topology.
  \end{itemize}
  A typical example for $W^\e$ to fulfill the third assumption is that $W^\e = \sqrt\e W$, where $W$ is a $d$-dimensional standard Brownian motion (e.g., \cite[Theorem 5.2.3]{DZ98}). Since
  \begin{equation*}
    |V(B^\e)_t| \vee \frac{1}{\e} |C^\e_t| \le K(t) + \int_{|x|>1} |x| \nu(dx,[0,t]) \le K(t) + \int_{|x|>1} e^{|x|} \nu(dx,[0,t]),
  \end{equation*}
  and
  \begin{equation*}
    \begin{split}
      \e \int_{\Ro d} \exp\left(\frac{|x|}{\e r}\vee 1 \right) \nu^\e(dx,[0,t]) &= \int_{\Ro d} \exp\left(\frac{|y|}{ r}\vee 1 \right) \nu(dy,[0,t]) \\
      &= e \nu(\{0<|y|\le r\},[0,t]) + \int_{|y|>r} e^{|y|/r} \nu(dy,[0,t]),
    \end{split}
  \end{equation*}
  where the right hand sides are both deterministic and independent of $\e$, the three families in \eqref{three-families-2} are exponentially tight for each $t>0$. On the other hand, 
  since almost all sample paths of $W^\e$ lies in $\C_d$ and the uniform topology on $\D_d$ is finer than the Skorokhod topology, the family $\{W^\e\}$ also satisfies an LDP in $\D_d$ with Skorokhod topology. The assumption on $\nu$ yields an LDP for the family $\{L^\e\}$, referring to \cite[Theorem 1.2]{DeA93}. Thus, the independence of $W$ and $L^\e$ yields that the family $\{\hat X^\e = (b^\e, W^\e, L^\e)\}$ satisfies an LDP (cf. \cite[Exercise 4.2.7]{DZ98}). Consider the following family of L\'evy-driven SDEs
  \begin{equation*}
    \begin{split}
      Y^\e_t =&\ Y_0^\e + \int_0^t F_1(Y^\e_{s}) d b^\e_s + \int_0^t F_2(Y^\e_{s}) dW^\e_s \\
         &\ +\e \int_0^t\int_{0<|x|\le 1} F_3(Y^\e_{s-})x\tilde{N}^{\e^{-1}}(dx,ds) +\e \int_0^t\int_{|x|> 1}  F_3(Y^\e_{s-})x N^{\e^{-1}}(dx,ds),
    \end{split}
  \end{equation*}
  where $F_1, F_2, F_3$ are bounded Lipschitz functions, the initial value $Y_0^\e$ is an $\R^d$-valued random variable and independent of $W$ and $L^\e$ for each $\e$. We suppose the family of initial values $\{Y_0^\e\}$ satisfies an LDP. Then the family $\{(Y_0^\e,\hat X^\e)\}$ also satisfies an LDP, again by the independence. Therefore, Corollary \ref{LDP-Ito} yields that the solution family $\{Y^\e\}$ satisfies an LDP, with a good rate function.
\end{example}

\begin{remark}
  (i). We remark that the assumption $\int_{|x|>r} e^{\lambda |x|} \nu(dx,[0,t])<\infty$ for all $t\ge0$ and $\lambda>0$ is equivalent to (see \cite[Theorem 25.3]{Sat99})
  \begin{equation*}
    \E^\e( e^{\lambda L_1} ) <\infty, \quad \text{for all } \lambda>0,
  \end{equation*}
  which is the \emph{exponential integrability} condition found in \cite{DeA93,Kuh14}.

  (ii). Generalizations of Example \ref{examp} and Corollary \ref{LDP-Ito} to L\'evy-It\^o SDEs or processes with $\e$-dependent coefficients can be found in \cite{Puh04,Log12}. The generalized counterparts of condition \eqref{con-exp-int} are \cite[Section 4, ($\tilde{\text{P}}$) and ($\widetilde{\text{SE}}$)]{Puh04} and \cite[Eq.~(2)]{Log12} respectively.
\end{remark}

\paragraph{Acknowledgements.}
The research of J. Duan was partly supported by the NSF grant 1620449. The research of Q. Huang was partly supported by FCT, Portugal, project PTDC/MAT-STA/28812/2017. We would like to thank the reviewers for their thoughtful comments and efforts towards improving our manuscript.

\begin{appendices}

\section{More on UET property}\label{app}

In this section, we will present more characterizations, both sufficient and necessary, for the UET property. Firstly, the converse of the first statement in Lemma \ref{exp-tight-bv-conti} holds when the processes are predictable, as we will show in the following lemma.

\begin{lemma}
  Let $\{B^\e\}_{\e>0}$ be a family of one-dimensional predictable processes starting at $0$ with locally finite variation. If the family $\{B^\e\}_{\e>0}$ is UET, then for each $t>0$, the family $\{V(B^\e)_t\}_{\e>0}$ is exponentially tight.
\end{lemma}
\begin{proof}
  Note first that for each $\e>0$, there is a predictable set $A_\e$ on $(\Omega^\e,\F^\e,\{\F^\e_t\},\P^\e)$ such that $V(B^\e) = ( \ind_{A_\e} - \ind_{(A_\e)^c}) \cdot B^\e$ and $B^\e = ( \ind_{A_\e} - \ind_{(A_\e)^c}) \cdot V(B^\e)$. Fix $t>0$ and $\delta>0$. For each $\e>0$, by \cite[Lemma I.3.10]{JS13}, there is a stopping time $T^\e$ such that
  \begin{equation}\label{est-2}
    \P^\e(T^\e<t) < \delta^{1/\e}
  \end{equation}
  and $\E^\e(V(B^\e)_{T^\e})<\infty$. The last inequality ensures that we can choose a predictable set $A'_\e$ from the algebra generated by $\{A\times\{0\}: A\in \F^\e_0\} \cup \{A\times (s,r]: A\in \F^\e_s, 0\le s<r\}$, which is a subalgebra generating the predictable $\sigma$-algebra, such that
  \begin{equation}\label{est-3}
    \E^\e((|\ind_{A_\e}-\ind_{A'_\e}| \cdot V(B^\e))_{T^\e})\le a\delta^{1/\e}/2.
  \end{equation}
  The way of choosing the set $A'_\e$ yields $\ind_{A'_\e}-\ind_{(A'_\e)^c} \in \Pred_1^\e$. It is easy to deduce that $V(B^\e) = ( \ind_{A'_\e} - \ind_{(A'_\e)^c}) \cdot B^\e + 2 |\ind_{A_\e}-\ind_{A'_\e}| \cdot V(B^\e)$. Then \eqref{est-2}, \eqref{est-3} and Definition \ref{UET-def} yield for every $0<\e\le\e_0 \wedge 1$,
  \begin{equation*}
    \begin{split}
       &\ [\P^\e(V(B^\e)_t > 2a)]^\e \\
       =&\ \left[\P^\e(V(B^\e)_t > 2a, T^\e<t) + \P^\e(V(B^\e)_t > 2a, T^\e\le t)\right]^\e \\
         \le&\ \delta + \left[\P^\e( (( \ind_{A'_\e} - \ind_{(A'_\e)^c}) \cdot B^\e)_t > a )\right]^\e + \left[\P^\e( 2( |\ind_{A_\e}-\ind_{A'_\e}| \cdot V(B^\e))_{T^\e} > a )\right]^\e \\
         \le&\ 3\delta.
    \end{split}
  \end{equation*}
  The result follows.
\end{proof}


\begin{lemma}
  Let (i), (ii) and (iii) are those statements in Lemma \ref{exp-tight-large-jumps} and \ref{exp-tight-large-jumps-2}. Then we have the equivalence (i)$\Leftrightarrow$(i') and the implications 
  (ii')$\Rightarrow$(ii)$\Rightarrow$(ii'') and (iii')$\Rightarrow$(iii), 
  where (i'), (ii'), (ii'') and (iii') are given by the following: \\
  (i'). For all $\eta>0$, $\lim_{r\to\infty} \limsup_{\e\to0} \e\log \P^\e(\nu^\e(\{|x|>r\}\times[0,t])>\eta^{1/\e}) = -\infty$. \\
  (ii'). For all $r>0$, $\lim_{\eta\to 0} \limsup_{\e\to0} \e\log \P^\e(\nu^\e(\{|x|>r\}\times[0,t])>\eta^{1/\e}) = -\infty$. \\
  (ii''). For all $r>0$, $\lim_{\eta\to \infty} \limsup_{\e\to0} \e\log \P^\e(\nu^\e(\{|x|>r\}\times[0,t])>\eta^{1/\e}) = -\infty$. \\
  (iii'). $\lim_{\eta\to\infty} \lim_{b\to\infty} \limsup_{\e\to0} \e\log \P^\e ( \e\exp(|x|/(\e r))\ind_{\{\e r<|x|\le b\}} * \nu^\e_t >\eta ) = -\infty$.
\end{lemma}

\begin{proof}
  Next, we prove the equivalence (i)$\Leftrightarrow$(i'). To this purpose, we define for each $r>0$ and $\e>0$, $A^{r,\e}_t:=\sum_{0\le s\le t} \ind_{\{|\Delta X^\e_s|>r\}}$. Firstly we note that for all $\{\F^\e_t\}$-stopping times $T$,
  $$\E^\e\left(A^{r,\e}_T\right) = \E^\e\left(\nu^\e(\{|x|>r\}\times[0,T])\right).$$
  Assume (i) holds. Using Lenglart's inequality (see \cite[Lemma I.3.30]{JS13}), we have
  \begin{equation}\label{est-13}
    \begin{split}
      \P^\e\left( \nu^\e(\{|x|>r\}\times[0,t])>\eta^{1/\e} \right) & \le b^{1/\e} + \eta^{-1/\e} \E^\e \left( \sup_{0\le s\le t} \Delta A^{r,\e}_s \right) + \P^\e \left( A^{r,\e}_t \ge (b\eta)^{1/\e} \right) \\
         & \le b^{1/\e} + (\eta^{-1/\e}+1) \P^\e\left( \sup_{0\le s\le t}|\Delta X^\e_s| > r \right),
    \end{split}
  \end{equation}
  since $0\le\Delta A^{r,\e}\le1$ and $\{ \sup_{0\le s\le t} \Delta A^{r,\e}_s >0 \} = \{A^{r,\e}_t>0\} = \{ \sup_{0\le s\le t}|\Delta X^\e_s| > r \}$. Thus,
  \begin{equation*}
    \begin{split}
      \limsup_{\e\to0} \e\log \P^\e\left( \nu^\e(\{|x|>r\}\times[0,t])>\eta^{1/\e} \right) \le &\ (\log b) \vee \bigg[ ((-\log\eta)\vee 0) \\
         & \qquad\qquad\  + \limsup_{\e\to0} \e\log \P^\e\left( \sup_{0\le s\le t}|\Delta X^\e_s| > r \right) \bigg],
    \end{split}
  \end{equation*}
  which yields (i') by letting first $r\to\infty$ and then $b\to0$. For (i')$\Rightarrow$(i), we use Lenglart's inequality again to deduce
  \begin{equation}\label{est-15}
    \P^\e\left( \sup_{0\le s\le t}|\Delta X^\e_s| > a \right) = \P^\e \left( A^{a,\e}_t \ge 1 \right) \le \eta^{1/\e} + \P^\e\left( \nu^\e(\{|x|>a\}\times[0,t])>\eta^{1/\e} \right).
  \end{equation}
  Then the result follows by taking $\limsup_{\e\to0} \e\log$ on both sides and then letting $a\to\infty$ and $\eta\to0$ successively.

  The approach of proving (ii')$\Rightarrow$(ii)$\Rightarrow$(ii'') is similar. The key is using Lenglart's inequality to get
  \begin{equation}\label{est-14}
    \P^\e \left( A^{r,\e}_t \ge a \right) \le \frac{\eta^{1/\e}}{a} + \P^\e\left( \nu^\e(\{|x|>r\}\times[0,t])>\eta^{1/\e} \right),
  \end{equation}
  \begin{equation*}
    \begin{split}
      \P^\e\left( \nu^\e(\{|x|>r\}\times[0,t])>\eta^{1/\e} \right) & \le b \eta^{-1/\e} + \eta^{-1/\e} \E^\e \left( \sup_{0\le s\le t} \Delta A^{r,\e}_s \right) + \P^\e \left( A^{r,\e}_t \ge b \right) \\
      & \le (b+1) \eta^{-1/\e} + \P^\e \left( A^{r,\e}_t \ge b \right).
    \end{split}
  \end{equation*}
  Take $\limsup_{\e\to0} \e\log$ on both sides of above inequalities. Then let $\eta\to0$, $a\to\infty$ for the first inequality and let $\eta\to\infty$, $b\to\infty$ for the second. The results follow easily.

  We now prove the implication (iii')$\Rightarrow$(iii). Since $\sum_{0\le s\le t} |\Delta X^\e_s|\ind_{\{\e r<|\Delta X^\e_s|\le b\}} = |x|\ind_{\{\e r<|x|\le b\}} * \mu^\e_t$, we use Lemma \ref{estimates} to get
  \begin{equation*}
    \P^\e\left( \sum_{0\le s\le t} |\Delta X^\e_s|\ind_{\{\e r<|\Delta X^\e_s|\le b\}} > a \right) \le e^{(\eta-a/r)/\e} + \P^\e\left( \e\exp(|x|/(\e r))\ind_{\{\e r<|x|\le b\}} * \nu^\e_t >\eta \right).
  \end{equation*}
  Take $\limsup_{\e\to0} \e\log$ on both sides and then let $b\to\infty$, $a\to\infty$ and $\eta\to\infty$ successively. The result follows.
\end{proof}

\begin{remark}
  From the proof, one can see that (ii') implies much stronger conditions than (i) and (ii). Indeed, by \eqref{est-15}, (ii') implies
  \begin{equation}\label{C-exp-tight}
    \limsup_{\e\to0} \e\log \P^\e\left( \sup_{0\le s\le t}|\Delta X^\e_s|\ge a \right) = -\infty, \quad\text{for all } a>0.
  \end{equation}
  If \eqref{C-exp-tight} holds for all $t>0$, then one say that the family $\{X^\e\}_{\e>0}$ is $\C$-\emph{exponentially tight} (see \cite[Definition 4.12]{FK06}). Here the symbol $\C$ indicates the continuity. More precisely, we denote by $\C_d:=\C(\R_+;\R^d)$. On the other hand, by \eqref{est-14}, (ii') implies
  \begin{equation*}
    \limsup_{\e\to0} \e\log \P^\e\left( \sum_{0\le s\le t} \ind_{\{|\Delta X^\e_s|>r\}} > a \right) = -\infty, \quad\text{for all } r>0 \text{ and } a>0.
  \end{equation*}
  Moreover, from the inequalities \eqref{est-13} and \eqref{est-14}, it is easy to see that (i) implies
  \begin{equation*}
    \lim_{r\to\infty} \limsup_{\e\to0} \e\log \P^\e\left( \sum_{0\le s\le t} \ind_{\{|\Delta X^\e_s|>r\}} > a \right) = -\infty, \quad\text{for all } a>0.
  \end{equation*}
\end{remark}

The following lemma presents some necessary conditions for the UET property of $\{X^\e\}_{\e>0}$. 
\begin{lemma}
  Let $\{X^\e\}_{\e>0}$ be a family of one-dimensional semimartingales starting at $0$ satisfying UET condition. Then for each $t>0$, the two families $\{\sup_{0\le s\le t}|X^\e_s|\}_{\e>0}$ and $\{[X^\e,X^\e]_t\}_{\e>0}$ are exponentially tight.
\end{lemma}
\begin{proof}
  Fix $t>0$. The exponential tightness of $\{\sup_{0\le s\le t}|X^\e_s|\}$ follows from Definition \ref{UET-def} by taking $H\equiv 1$. For each $\e>0$, $n\in\N_+$, define a process
  \begin{equation*}
    Q^{\e,n}_t := \sum_{i=1}^n (X^\e_{it/n} - X^\e_{(i-1)t/n})^2.
  \end{equation*}
  By the construction of quadratic variation (see \cite[Theorem I.4.47]{JS13}), for each $\e>0$, the process $Q^{\e,n}$ converges in probability $\P^\e$ to $[X^\e,X^\e]$, uniformly on compact intervals, as $n\to\infty$. Then for all $a>0$,
  \begin{equation*}
    \P^\e( [X^\e,X^\e]_t>a ) \le \liminf_{n\to\infty}\P^\e( Q_t^{\e,n}>a ).
  \end{equation*}
  Set $H^{\e,n} = -2\sum_{i=1}^n X^\e_{(i-1)t/n} \ind_{((i-1)t/n, it/n]}$, then
  \begin{equation}\label{Q}
    Q^{\e,n} = |X^\e|^2 + H^{\e,n}\cdot X^\e.
  \end{equation}
  Since for each $b>0$,
  \begin{equation*}
    \left\{ \sup_{0\le s\le t}|X^\e_s| \le b \right\} \subset \{ |H^{\e,n}| \le 2b \},
  \end{equation*}
  using \eqref{Q} we have for all $a> b^2$,
  \begin{equation*}
    \begin{split}
      \P^\e( Q^{\e,p}_t>a ) & \le \P^\e\left( \sup_{0\le s\le t}|X^\e_s| > b \right) + \P^\e\left( |(H^{\e,n}\cdot X^\e)_t| > a-b^2,\sup_{0\le s\le t}|X^\e_s| \le b\right) \\
         & \le \P^\e\left( \sup_{0\le s\le t}|X^\e_s| > b \right) + \sup_{H\in\Pred_1^\e} \P^\e\left( \left| \left( H \cdot X^\e \right)_t \right| > \frac{a-b^2}{2b} \right).
    \end{split}
  \end{equation*}
  Taking $\e\log$ on both sides, and letting first $a\to\infty$ and then $b\to\infty$, the exponential tightness of $\{[X^\e,X^\e]_t\}$ follows from that of $\{\sup_{0\le s\le t}|X^\e_s|\}$ and the UET property of $\{X^\e\}$.
\end{proof}

\section{$\C$-exponential tightness}\label{app-2}

We say that a rate function $I$ on $\D_d$ is \emph{quasi-left continuous} at $t\ge0$ if $I(x) = \infty$ for all $x\in\D_d$ with $\Delta x(t) \ne 0$. Recall that $\C_d:=\C(\R_+;\R^d)$ denotes the space of all $\R^d$-valued continuous functions on $\R_+$. If $I=\infty$ on $\D_d\setminus\C_d$, then it is quasi-left continuous at all $t\in[0,\infty)$. The following criterion for quasi-left continuity of rate functions is a counterpart of \cite[Theorem 4.13]{FK06}. We will omit the proof.
\begin{lemma}
  Let a family of probability measure $\{\mu_\e\}_{\e>0}$ on $\D_d$ be exponentially tight. Let $A\subset [0,\infty)$. Then each rate function $I$ that gives a large deviation principle for a subsequence $\{\mu_{\e_k}\}_{k=1}^\infty$ with $\e_k\to 0$ as $k\to\infty$ is quasi-left continuous at all $t\in A$ if and only if for every $T>0$ and $\eta>0$,
  \begin{equation*}
    \limsup_{\e\to0} \e \log \mu_\e\left( x\in \D_d: \sup_{t\in A\cap[0, T]}|\Delta x(t)| \ge\eta \right) = -\infty.
  \end{equation*}
\end{lemma}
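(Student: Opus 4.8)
The plan is to run the standard subsequence argument. Since $\{\mu_\e\}$ is exponentially tight, every subsequence has a further subsequence obeying the LDP with a good rate function (\cite[Lemma 4.1.23]{DZ98}), so it suffices to fix one convergent subsequence $\{\mu_{\e_k}\}$ with good rate function $I$ and prove that $I$ is quasi-left continuous at every $t\in A$ exactly when the displayed estimate holds along the full family. Write $G_{\eta,T}:=\{x\in\D:\sup_{t\in A\cap[0,T]}|\Delta x(t)|\ge\eta\}$ for the event in question. The analytic inputs are only the two LDP bounds, the goodness of $I$ (to pass from an infimum to an attained minimum), and the behaviour of the jump functionals under the Skorokhod topology: the maps $x\mapsto x(t)$ and $x\mapsto x(t-)$ are continuous at every path continuous at $t$, while if $x_n\to x$ in $\D$ then every jump of $x$ is a limit of jumps of the $x_n$ at nearby times and no ``new'' jump is created in the limit (\cite[Chapter VI]{JS13}).

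For ``estimate $\Rightarrow$ quasi-left continuity'', fix $t_0\in A$ and a path $x_0$ with $\Delta x_0(t_0)\ne0$; I must show $I(x_0)=\infty$. Pick $0<\eta<|\Delta x_0(t_0)|$ and $T>t_0$. Persistence of jumps produces an open neighbourhood $O\ni x_0$ each element of which has a jump of modulus $>\eta$ at some time near $t_0$, and — using that $A$ is a neighbourhood of $t_0$ in the case of interest — after shrinking $O$ and slightly enlarging $T$ those jump times lie in $A\cap[0,T]$, so $O\subset G_{\eta,T}$. The LDP lower bound then gives $-I(x_0)\le-\inf_O I\le\liminf_k\e_k\log\mu_{\e_k}(O)\le\limsup_{\e\to0}\e\log\mu_\e(G_{\eta,T})=-\infty$, so $I(x_0)=\infty$; since $x_0$ was arbitrary this is quasi-left continuity at $t_0$, and $t_0\in A$ was arbitrary.

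For the converse, suppose the estimate fails: there are $T,\eta>0$ and a subsequence along which $\e_k\log\mu_{\e_k}(G_{\eta,T})$ stays bounded below; refining again by exponential tightness we may assume $\mu_{\e_k}$ obeys the LDP with a good $I$, which by hypothesis is quasi-left continuous at every point of $A$. Since jumps are preserved under $J_1$-limits, $\overline{G_{\eta,T}}\subset\{x:|\Delta x(t)|\ge\eta\text{ for some }t\in\overline A\cap[0,T]\}$; the LDP upper bound applied to this closed set, together with the goodness of $I$, produces a path $x^*$ with $I(x^*)<\infty$ having a jump of modulus $\ge\eta$ at some $t^*\in\overline A\cap[0,T]$, and when $A$ is closed (the case used below) $t^*\in A$, contradicting quasi-left continuity of $I$ at $t^*$. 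Hence the estimate holds for all $T,\eta$.

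The genuinely delicate step I would work out carefully is this topological squeeze: sandwiching the event ``some jump of size $\ge\eta$ occurs at a time of $A\cap[0,T]$'' between an \emph{open} set (to feed the LDP lower bound in the first implication) and a \emph{closed} set (to feed the upper bound in the second), which hinges on the semicontinuity of jump functionals on $\D$ and on the way the index set $A$ interacts with $J_1$-limits of jump times. Everything else — the reductions to subsequences, the two short chains of inequalities, and the extraction of a minimiser from goodness — is routine and parallels \cite[Theorem 4.13]{FK06}.
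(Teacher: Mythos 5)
Your overall strategy -- reduce to subsequences via exponential tightness, feed an \emph{open} subset of $G_{\eta,T}$ into the LDP lower bound for one implication, and the \emph{closure} of $G_{\eta,T}$ plus goodness of $I$ into the upper bound for the other -- is the right one; it is exactly the proof of \cite[Theorem 4.13]{FK06} in the case $A=[0,\infty)$, which is the only case the paper subsequently uses. For that case your two topological claims are correct: by persistence of large jumps under $J_1$-convergence, $\{x:\exists\, t\in(t_0-\delta,t_0+\delta),\ |\Delta x(t)|>\eta\}$ is open and $\{x:\sup_{t\in[0,T]}|\Delta x(t)|\ge\eta\}$ is closed, and the two chains of inequalities then go through.

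The gap is that the lemma is stated for an \emph{arbitrary} $A\subset[0,\infty)$, and your two directions impose incompatible hypotheses on $A$: the lower-bound direction needs $A$ to contain a neighbourhood of $t_0$ (your ``in the case of interest''), while the upper-bound direction needs $A$ closed (your ``the case used below''). The only subsets of $[0,\infty)$ that are both open and closed are $\emptyset$ and $[0,\infty)$, so as written you have proved the lemma only for $A=[0,\infty)$. This cannot be repaired, because the statement itself fails for general $A$. Take $A=\{1\}$ and $\mu_\e=\delta_{y_\e}$ with $y_\e=\ind_{[1+\e,\infty)}$: the family is exponentially tight and satisfies the LDP (along every subsequence) with the good rate function $I(y_0)=0$, $y_0=\ind_{[1,\infty)}$, and $I=\infty$ elsewhere; the displayed estimate holds trivially since $\Delta y_\e(1)=0$ for every $\e>0$, yet $I$ is not quasi-left continuous at $1$ because $I(y_0)<\infty$ and $\Delta y_0(1)\ne0$. (Symmetrically, $y_\e=\ind_{[1-\e,\infty)}$ with $A=[0,1)$ defeats the converse direction for non-closed $A$: quasi-left continuity holds on $A$ but the estimate fails.) So either the lemma must carry a hypothesis on $A$ -- in effect restricting it to the $\C$-exponential-tightness statement $A=[0,\infty)$ that the paper actually invokes -- or no proof of the statement as written exists; your argument should be presented as a proof of that restricted case rather than of the general claim.
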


Using this lemma and the definition of $\C$-exponential tightness recalled in \eqref{C-exp-tight}, we can see that a family of c\`adl\`ag processes $\{X^\e\}_{\e>0}$ is $\C$-exponentially tight if and only if each rate function $I$ that gives a large deviation principle for a subsequence $\{X^{\e_k}\}_{k=1}^\infty$ with $\e_k\to 0$ as $k\to\infty$ is quasi-left continuous at all $t\in [0,\infty)$.

\begin{corollary}
  If the rate function $I'$ is quasi-left continuous at $t_0\in [0,\infty)$, then $I$ and $I^\flat$ defined in \eqref{rate-func} and \eqref{rate-func-2} are also quasi-left continuous at $t_0$. In particular, if the family $\{(X^\e,U^\e)\}_{\e>0}$ is $\C$-exponentially tight, so are $\{(X^\e,U^\e,Y^\e)\}_{\e>0}$ and $\{Y^\e\}_{\e>0}$.
\end{corollary}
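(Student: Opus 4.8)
The plan is to read off quasi-left continuity of $I$ and $I^\flat$ directly from their explicit descriptions \eqref{rate-func} and \eqref{rate-func-2} in terms of $I'$, the only analytic input being the elementary fact that the integral $F(y)\cdot x$ cannot jump at a time where $x$ does not jump. Concretely, from the definition of $f\cdot x$ as a limit of left-endpoint Riemann--Stieltjes sums one gets $\Delta\big(F(y)\cdot x\big)(t_0)=F(y)(t_0-)\,\Delta x(t_0)$ for $x$ of locally finite variation, and since $|F|\le C$ this vanishes whenever $\Delta x(t_0)=0$; no continuity of $F$ is needed for the direction we use. I would phrase everything through this single identity
\[
  \Delta y(t_0)=\Delta u(t_0)+F(y)(t_0-)\,\Delta x(t_0)
  \qquad\text{whenever } y=u+F(y)\cdot x,\ x\text{ locally of finite variation.}
\]

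For quasi-left continuity of $I$ at $t_0$ I would argue by contradiction. Suppose $(x,u,y)$ satisfies $\Delta(x,u,y)(t_0)\ne0$ but $I(x,u,y)<\infty$. By \eqref{rate-func}, $x$ is locally of finite variation, $y=u+F(y)\cdot x$, and $I(x,u,y)=I'(x,u)$. If both $\Delta x(t_0)=0$ and $\Delta u(t_0)=0$, the displayed identity forces $\Delta y(t_0)=0$, contradicting $\Delta(x,u,y)(t_0)\ne0$; hence $\Delta(x,u)(t_0)\ne0$. Quasi-left continuity of $I'$ at $t_0$ then gives $I'(x,u)=\infty$, so $I(x,u,y)=\infty$ — the desired contradiction. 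The argument for $I^\flat$ is the same: if $\Delta y(t_0)\ne0$ and $I^\flat(y)<\infty$, pick by \eqref{rate-func-2} a pair $(x,u)$ with $x$ locally of finite variation, $y=u+F(y)\cdot x$ and $I'(x,u)<\infty$; quasi-left continuity of $I'$ forces $\Delta x(t_0)=\Delta u(t_0)=0$, whence $\Delta y(t_0)=0$ by the displayed identity, a contradiction. This proves the first assertion.

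For the ``in particular'' statement I would invoke the characterization recorded just before the corollary (a consequence of Lemma \ref{quasi-left}): a family satisfying an LDP with a good rate function is $\C$-exponentially tight if and only if that rate function is quasi-left continuous at every $t\in[0,\infty)$. If $\{(X^\e,U^\e)\}$ is $\C$-exponentially tight then $I'$ is quasi-left continuous at all $t_0\ge0$, so by the previous paragraph $I$ and $I^\flat$ are quasi-left continuous at all $t_0\ge0$; since $\{(X^\e,U^\e,Y^\e)\}$ and $\{Y^\e\}$ satisfy the LDP with the good rate functions $I$ and $I^\flat$ by Theorem \ref{LDP-special}, the characterization applied once more gives $\C$-exponential tightness of both families. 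The only genuinely delicate point in the whole argument is the jump identity for $F(y)\cdot x$ at $t_0$, which must be extracted carefully from the definition of $f\cdot x$ (so that $F(y)\cdot x=\int_0^\cdot F(y(s-))\,dx(s)$ in the Lebesgue--Stieltjes sense when $x$ has locally finite variation); beyond this bookkeeping I do not expect any real obstacle.
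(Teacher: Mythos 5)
Your argument is correct and follows essentially the same route as the paper: both reduce everything to the jump identity $\Delta y(t_0)=\Delta u(t_0)+F(y(t_0-))\,\Delta x(t_0)$ for solutions $y=u+F(y)\cdot x$ with $x$ locally of finite variation, conclude that $\Delta(x,u,y)(t_0)\ne0$ forces $\Delta(x,u)(t_0)\ne0$ and hence $I'(x,u)=\infty$, and then obtain the ``in particular'' claim from the characterization of $\C$-exponential tightness via quasi-left continuity of rate functions recorded just before the corollary.
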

\begin{proof}
  Obviously, the quasi-left continuity of $I^\flat$ is implied by that of $I$. The second statement is trivial by the preceding argument. So we only need to show the quasi-left continuity of $I$. Let $\Delta (x,u,y)(t_0) \ne 0$. It is only needed to consider the case that $y = u+F(y)\cdot x$, and $x$ is of finite variation. If $\Delta (x,u)(t_0) \ne 0$, then $I(x,u,y) = I'(x,u) = \infty$. If not, then $\Delta y(t_0) \ne 0$, which still yields $\Delta (x,u)(t_0) \ne 0$ since $\Delta y = \Delta u + F(y)\Delta x$. We are done.
\end{proof}

\end{appendices}

\footnotesize{
\bibliographystyle{MyStyle-plainnat}
\bibliography{LDP}
}

\end{document}